\newcommand{\nc}{\newcommand}
\nc{\les}{\lesssim}
\nc{\nit}{\noindent}
\nc{\nn}{\nonumber}
\nc{\D}{\partial}
\nc{\diff}[2]{\frac{d #1}{d #2}}
\nc{\diffn}[3]{\frac{d^{#3} #1}{d {#2}^{#3}}}
\nc{\pdiff}[2]{\frac{\partial #1}{\partial #2}}
\nc{\pdiffn}[3]{\frac{\partial^{#3} #1}{\partial{#2}^{#3}}}
\nc{\abs}[1] {\lvert #1 \rvert}
\nc{\cAc}{{\cal A}_c}
\nc{\cE}{{\cal E}}
\nc{\cF}{{\cal F}}
\nc{\cP}{{\cal P}}
\nc{\cV}{{\cal V}}
\nc{\cQ}{{\cal Q}}
\nc{\cGin}{{\cal G}_{\rm in}}
\nc{\cGout}{{\cal G}_{\rm out}}
\nc{\cO}{{\cal O}}
\nc{\Lav}{{\cal L}_{\rm av}}
\nc{\cL}{{\cal L}}
\nc{\cB}{{\cal B}}
\nc{\cZ}{{\cal Z}}
\nc{\cR}{{\cal R}}
\nc{\cT}{{\cal T}}
\nc{\cY}{{\cal Y}}
\nc{\cX}{{\cal X}}
\nc{\cXT}{{{\cal X}(T)}}
\nc{\cBT}{{{\cal B}(T)}}
\nc{\vD}{{\vec \mathcal{D}}}
\nc{\efield}{\mathcal{E}}
\nc{\vE}{{\vec \efield}}
\nc{\vB}{{\vec \mathcal{B}}}
\nc{\vH}{{\vec \mathcal{H}}}
\nc{\F}{  \mathcal{F} }
\nc{\ty}{{\tilde y}}
\nc{\tu}{{\tilde u}}
\nc{\tV}{{\tilde V}}
\nc{\Pc}{{\bf P_c}}
\nc{\bx}{{\bf x}}
\nc{\bX}{{\bf X}}
\nc{\bXYZ}{{\bf XYZ}}
\nc{\bY}{{\bf Y}}
\nc{\bF}{{\bf F}}
\nc{\bS}{{\bf S}}
\nc{\dV}{{\delta V}}
\nc{\dE}{{\delta E}}
\nc{\TT}{{\Theta}}
\nc{\dPsi}{{\delta\Psi}}
\nc{\order}{{\cal O}}
\nc{\Rout}{R_{\rm out}}
\nc{\eplus}{e_+}
\nc{\eminus}{e_-}
\nc{\epm}{e_\pm}
\nc{\eps}{\varepsilon}
\nc{\vnabla}{{\vec\nabla}}
\nc{\G}{\Gamma}
\nc{\w}{\omega}
\nc{\mh}{h}
\nc{\mg}{g}
\nc{\vphi}{\varphi}
\nc{\tlambda}{\tilde\lambda}
\nc{\be}{\begin{equation}}
\nc{\ee}{\end{equation}}
\nc{\ba}{\begin{eqnarray}}
\nc{\ea}{\end{eqnarray}}
\nc{\g}{\gamma}
\nc{\ol}{\overline}
\newtheorem{theorem}{Theorem}[section]
\newtheorem{lemma}[theorem]{Lemma}
\newtheorem{prop}[theorem]{Proposition}
\newtheorem{corollary}[theorem]{Corollary}
\newtheorem{defin}[theorem]{Definition}
\newtheorem{rmk}[theorem]{Remark}
\nc{\pT}{\partial_T}
\nc{\pz}{\partial_z}
\nc{\pt}{\partial_t}
\nc{\la}{\langle}
\nc{\ra}{\rangle}
\nc{\infint}{\int_{-\infty}^{\infty}}
\nc{\halfwidth}{6.5cm}
\nc{\figwidth}{10cm}
\newcommand{\f}{\frac}
\nc{\nlayers}{L} \nc{\nsectors}{M}
\nc{\indicator}{\mathbf{1}}
\nc{\Rhole}{R_{\rm hole}}
\nc{\Rring}{R_{\rm ring}}
\nc{\neff}{n_{\rm eff}}
\nc{\Frem}{F_{\rm rem}}
\nc{\R}{\mathbb R}
\nc{\C}{\mathbb C}
\nc{\Z}{\mathbb Z}
\nc{\DD}{\Delta}
\nc{\cD}{\mathcal D}
\nc{\lnorm}{\left\|}
\nc{\rnorm}{\right\|}
\nc{\rnormp}{\right\|_{\ell^{p,\eps}}}
\nc{\rar}{\rightarrow}
\nc{\mR}{\mathcal R}
\nc{\oo}{\"o}   
\nc{\os}{\overset{o}}
\begin{document}

\begin{abstract}
We study the massive two dimensional Dirac operator with an electric potential. In particular, we show that 
the $t^{-1}$ decay rate holds in the $L^1\to L^\infty$ setting if the threshold energies are regular.  We also show these bounds hold in the presence of s-wave resonances at the threshold.  We further show that, if the threshold energies are regular that a faster decay rate of $t^{-1}(\log t)^{-2}$ is attained for large $t$, at the cost of logarithmic spatial weights. The free Dirac equation does not satisfy this bound due to the s-wave resonances at the threshold energies.
\end{abstract}

\title[Dispersive estimates for Dirac Operators]{ Dispersive estimates for massive Dirac operators in dimension two}

\author[Erdogan, Green, Toprak]{M. Burak Erdo\u{g}an, William~R. Green and Ebru Toprak}
\thanks{The first and third authors were partially supported by DMS-1501041. The third author acknowledges the support of a Rose-Hulman summer professional development grant.}
\address{Department of Mathematics \\
University of Illinois \\
Urbana, IL 61801, U.S.A.}
\email{berdogan@illinois.edu}
\address{Department of Mathematics\\
Rose-Hulman Institute of Technology \\
Terre Haute, IN 47803, U.S.A.}
\email{green@rose-hulman.edu}
\address{Department of Mathematics \\
University of Illinois \\
Urbana, IL 61801, U.S.A.}
\email{toprak2@illinois.edu}

\maketitle

\section{Introduction}
We consider the linear Dirac equation with potential,
\begin{align}\label{dirac}
i\partial_t \psi(x,t) = (D_m +V(x)) \psi(x,t), \,\,\,\, \psi(x,0)= \psi_0(x),
\end{align} 
with $\psi(x,t) \in \mathbb{C}^{2}$ when the spatial variable $(x_1,x_2)= x \in \R^2$. The free Dirac operator $D_m$ is defined by 
$$
	D_m=-i\alpha\cdot \nabla +m\beta = -i \alpha_1 \partial_{x_1}-i\alpha_2 \partial_{x_2}+m\beta.
$$
Here $m\geq0$ is the mass of the quantum particle.  When $m>0$, \eqref{dirac} is the massive Dirac equation and when $m=0$, the equation is massless.  The $2\times 2$ Hermitian matrices $\alpha_j$ (with $\alpha_0=\beta$) satisfy the anti-commutation relationship
\be\label{eqn:anticomm}
	\alpha_j\alpha_k+\alpha_k \alpha_2 =2 \delta_{jk} \mathbbm 1_{\mathbb C^2},
	\qquad 0\leq j,k\leq 2,
\ee
By convention, we take
\begin{align}
\beta=\left(\begin{array}{cc} 1& 0\\ 0 & -1
\end{array}\right), \qquad \alpha_1=\left(\begin{array}{cc} 0 & 1\\ 1 & 0
\end{array}\right), \qquad
\alpha_2=\left(\begin{array}{cc} 0 & -i\\ i & 0
\end{array}\right).
\end{align}
The hyperbolic system \eqref{dirac} was derived by Dirac to describe the evolution of a quantum particle at near luminal speeds.  We view the system as a relativistic modification of the Schr\"odinger equation.  This viewpoint is fruitful in light of the following identity\footnote{When we write scalar operators such as  $-\Delta+m^2-\lambda^2$, they are to be understood as $(-\Delta+m^2-\lambda^2)\mathbbm 1_{\mathbb C^{2}}$.  Similarly, we write $L^p$ to indicate $L^p(\R^2) \times L^p(\R^2)$.},  which follows from   \eqref{eqn:anticomm}:
\be  \label{dirac_schro_free}
(D_m-\lambda \mathbbm 1)(D_m+\lambda \mathbbm 1) =(-i\alpha\cdot \nabla +m\beta -\lambda \mathbbm 1)
(-i\alpha\cdot \nabla+m\beta+\lambda \mathbbm 1)   =(-\Delta+m^2-\lambda^2). 
\ee
This yields the  identity 
\be \label{eq:Dmpm}
\mR_0(\lambda)=(D_m+\lambda)R_0(\lambda^2-m^2)
\ee
for  the free Dirac resolvent,   $\mR_0(\lambda)=(D_m-\lambda)^{-1}$, where $R_0(\lambda)=(-\Delta-\lambda)^{-1}$ is the Schr\"odinger free resolvent  and $\lambda$ is in the resolvent set.
We refer the reader to the text of Thaller, \cite{Thaller}, for a more extensive introduction to the Dirac equation.

Our goal in this paper is to put the dispersive estimates for the massive Dirac equation on the same ground as those for the Schr\"odinger equation, \cite{Sc2,eg2,eg3}.  For the remainder of the paper $m>0$.  To this end, we extend the recent results of the first two authors, \cite{egd}, in two significant ways.  First, we show that the dispersive bounds hold uniformly, that is we show that the $ H^1 \to BMO$ bounds in \cite{egd} remain valid as operators from $L^1 \to L^\infty$.  Second, we show a large time integrable bound holds at the cost of spatial weights.  To state our results, we employ the following notation. Let $P_{ac}(H)$ be the projection on the absolutely continuous spectral subspace of $L^2(\R^2)$ associated with $H$.  In addition, we define $a-:=a-\epsilon$ for a small, but fixed $\epsilon>0$.

Our main result is the following logarithmically weighted decay estimate with an integrable decay rate in $t$:
\begin{theorem}  \label{th:main} 
	Assume that the matrix valued potential $V(x)$ is self-adjoint, with continuous entries satisfying $|V_{ij}(x)| \les \la x \ra ^{-\delta}$ for $\delta > 5 $. If the threshold energies  $\pm m$ are regular then we have  
	\begin{align}
		\| w^{-2} e^{-itH} P_{ac}(H)  \la H \ra ^{-\f 72-} f \|_{L^{\infty}(\R^2)} \les \frac{1}{ |t| \log^2 |t|} \| w^2 f \|_{L^1(\R^2)}, \,\,\ |t|>2
	\end{align} 
	where $w(x) = 1+ \log^{+} |x| $. 
\end{theorem}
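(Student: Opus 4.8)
\emph{Proof strategy.} The plan is to run the standard spectral representation machinery and carry the extra logarithmic gain through it. The starting point is Stone's formula: using the limiting absorption principle (valid for $\delta>1$, hence for $\delta>5$) together with the spectral theorem for the smoothing factor,
\[
e^{-itH}P_{ac}(H)\la H\ra^{-\f72-}=\f{1}{2\pi i}\int_{|\lambda|\geq m}e^{-it\lambda}\la\lambda\ra^{-\f72-}\big[\mR_V^+(\lambda)-\mR_V^-(\lambda)\big]\,d\lambda,
\]
where $\mR_V^\pm(\lambda)=(D_m+V-(\lambda\pm i0))^{-1}$. Writing $K_t(x,y)$ for the integral kernel of the left-hand operator, the theorem is equivalent to the pointwise estimate $|K_t(x,y)|\les |t|^{-1}\log^{-2}|t|\,w(x)^2w(y)^2$, uniformly in $x,y\in\R^2$ and for $|t|>2$. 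I would fix a smooth even cutoff $\chi$ equal to $1$ near $\{\pm m\}$ and supported slightly beyond, split $K_t=K_t^{\rm low}+K_t^{\rm high}$ accordingly, and --- using the symmetry $\lambda\mapsto-\lambda$ of $D_m$ --- reduce the low-energy part to a neighborhood of $\lambda=m$.

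\emph{Energies away from the thresholds.} For $K_t^{\rm high}$ I would iterate the resolvent identity $\mR_V^\pm=\sum_{k=0}^{N}(-1)^k\mR_0^\pm(V\mR_0^\pm)^k+(-1)^{N+1}(\mR_0^\pm V)^{N+1}\mR_V^\pm$ and use the explicit kernel of $\mR_0^\pm(\lambda)=(D_m+\lambda)R_0^\pm(\lambda^2-m^2)$ from \eqref{eq:Dmpm} via the Hankel-function formula for $R_0^\pm$. Each Born term becomes an oscillatory integral in $\lambda$ with phase $-t\lambda\pm\sqrt{\lambda^2-m^2}\,(|x-y_1|+\cdots+|y_k-y|)$ and with an amplitude that is smooth on the support of $1-\chi$ and decays in $\lambda$; repeated integration by parts in $\lambda$, using $\la\lambda\ra^{-\f72-}$ and the decay $|V_{ij}|\les\la x\ra^{-\delta}$, $\delta>5$, gives decay of order $|t|^{-1-}$, safely inside $|t|^{-1}\log^{-2}|t|$, with no weights needed except in the propagation zone $|x-y|\sim|t|$, where one simply observes $\max(w(x),w(y))\gtrsim\log|t|$. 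The $\mR_V^\pm$-remainder is absorbed by the weighted $L^2$ limiting absorption bounds once $N$ is taken large relative to $\f72$. This reduces everything to $K_t^{\rm low}$ near $\lambda=m$.

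\emph{The low-energy analysis and the logarithmic gain.} Here I would set $z=\sqrt{\lambda^2-m^2}\approx\sqrt{2m(\lambda-m)}$ and write $V=v^*Uv$ with $v=|V|^{1/2}$, and use the symmetric resolvent identity $\mR_V^\pm(\lambda)=\mR_0^\pm(\lambda)-\mR_0^\pm(\lambda)v^*[M^\pm(\lambda)]^{-1}v\,\mR_0^\pm(\lambda)$, $M^\pm(\lambda)=U+v\mR_0^\pm(\lambda)v^*$. From \eqref{eq:Dmpm} and the small-argument expansion $R_0^\pm(z^2)(x,y)=-\f1{2\pi}\log z+G_0(x,y)\mp\f i4+O(z^2\log z)$, one obtains the near-threshold expansion of $M^\pm(\lambda)$ whose leading term is $-\f{m}{2\pi}\log z$ times a fixed finite-rank operator on the s-wave resonance subspace, the $\pm i0$ prescription entering only the $O(1)$ part. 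The regularity hypothesis on $m$ is, exactly as in \cite{egd}, the invertibility of the complementary block of that $O(1)$ part; a Feshbach--Schur inversion then yields $[M^\pm(\lambda)]^{-1}=\f{1}{-\f{m}{2\pi}\log z+c^\pm(z)}\Gamma_1+\Gamma_0(z)$, with $c^\pm(z)=O(1)$, $\Gamma_1$ fixed finite rank, and $\Gamma_0(z)$ together with its $z$-derivatives tame. The key point is that the $\log z$ coefficient is real and $\pm$-independent, so the leading $(\log z)^{-1}$ terms of $[M^+]^{-1}$ and $[M^-]^{-1}$ cancel, leaving $[M^+(\lambda)]^{-1}-[M^-(\lambda)]^{-1}=O((\log z)^{-2})$, with the same gain after a $z$-derivative. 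Substituting this and the expansions of $\mR_0^\pm$ back into the symmetric identity removes the $\log z$ singularity from $\mR_V^\pm$, and presents $\mR_V^+(\lambda)-\mR_V^-(\lambda)$, for $\lambda$ near $m$, as a finite sum of terms each built from (i) operators with kernels growing no faster than $w(x)^2w(y)^2$ --- precisely where the logarithmic spatial weights are consumed, to absorb the $\log|x-y|$ growth of the outer free resolvents and of $G_0$; (ii) a slowly varying scalar amplitude which is either $O((\log z)^{-2})$ or analytic in $z^2$ with vanishing constant term; and (iii) at most one Bessel factor $J_0(z|x-y'|)$.

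\emph{The oscillatory integral, and the main obstacle.} Finally I would estimate $\int e^{-it\lambda}\chi(\lambda)\la\lambda\ra^{-\f72-}[\text{term}](\lambda,x,y)\,d\lambda$ by $|t|^{-1}\log^{-2}|t|\,w(x)^2w(y)^2$, uniformly in $x,y$. After substituting $s=\lambda-m$, the terms with no spatial oscillation reduce to $\int_0^\infty e^{-its}a(s)(\log s)^{-2}\,ds$ with $a$ smooth and compactly supported; integrating by parts once and splitting at $s\sim1/|t|$ --- here it is essential that $\chi$ is \emph{smooth}, so that no spurious $O(|t|^{-1})$ endpoint term appears --- gives exactly $O(|t|^{-1}\log^{-2}|t|)$, the gain coming from $(\log s)^{-2}|_{s\sim1/|t|}\sim(\log|t|)^{-2}$. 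For the terms with $J_0(z|x-y'|)$ I would combine this $(\log s)^{-2}$ bookkeeping with stationary phase in $s$ for the phase $-ts\pm\sqrt{2ms}\,|x-y'|$, critical at $s_*\sim|x-y'|^2/t^2$: when $|x-y'|\les\sqrt{|t|}$ the effective scale is $s\sim1/|t|$ and the amplitude supplies $(\log|t|)^{-2}$, while for $|x-y'|\gtrsim\sqrt{|t|}$ the propagation forces $\max(w(x),w(y))\gtrsim\log|t|$ and the weights supply it instead; the residual $O(z^2\log z)$ contributions and the free difference $\mR_0^+-\mR_0^-$ (which carries no logarithmic gain --- this is why the free equation fails the bound) are treated the same way, being $O(|t|^{-N})$ for bounded $|x-y'|$ and otherwise confined to the weighted propagation zone. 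The hard part will be precisely this synthesis: carrying the $(\log|t|)^{-2}$ gain through the stationary phase estimate \emph{uniformly} in the spatial variables, while simultaneously bookkeeping every error term --- the $z^2\log z$ corrections, the free resolvent difference, and the $L^2$-based high-energy remainder --- against the sharp weight $w^{\pm2}$ and the sharp smoothing $\la H\ra^{-\f72-}$; each piece is routine in isolation, but losing no power of the logarithm in their combination is the crux.
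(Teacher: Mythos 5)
Your outline diverges from the paper's actual route: the paper deduces Theorem~\ref{th:main} in one line by interpolating the unweighted bound of Theorem~\ref{th:main1} with the weighted bound of Theorem~\ref{th:main2}, via $\min(1,a/b)\les \log^2 a/\log^2 b$ for $a,b>2$ (so the $t^{-1-}\la x\ra^{3/2}\la y\ra^{3/2}$ error is converted into $w(x)^2w(y)^2/(t\log^2 t)$); a direct proof like yours must in effect reprove both ingredients. The central gap in your sketch is the treatment of the threshold $1/t$ terms. Near $\lambda=m$ one has $[\mR_0^+-\mR_0^-](z)(x,y)=imM_{11}+E_1(z)$ with $E_1$ small in $z$, so the free part of the Stone formula contributes exactly $-\f{m e^{-imt}}{t}M_{11}$ plus $O(t^{-5/4}\la x\ra^{3/2}\la y\ra^{3/2})$ (Lemma~\ref{lem:free}): an honest $1/t$ term, \emph{independent of $x,y$}, which is neither $O(|t|^{-N})$ nor confined to any propagation zone. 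The same is true of the terms in the symmetric resolvent expansion in which the $+/-$ difference falls on an outer free resolvent rather than on $M_\pm^{-1}$; those factors carry no $(\log z)^{-2}$ gain, and their boundary terms at $z=0$ are again of size $1/t$. The theorem is true only because, when $\pm m$ are regular, these $1/t$ boundary terms cancel in pairs — the free term against the leading piece of $\mR_0 v^*M^{-1}v\mR_0$, using $M_{11}v^*SvM_{11}=\|(a,c)\|^2M_{11}$ and the identity $g^+-g^-=i/2$ (Lemmas~\ref{fromeg3}--\ref{lem:nw12} and the following lemma). Your proposal never exhibits this cancellation; instead it declares the free difference harmless, which contradicts your own correct observation that the free evolution fails the bound, and without the cancellation the argument cannot produce anything better than $1/t$.

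A second, smaller problem is the high-energy step. You claim $|t|^{-1-}$ decay by repeated integration by parts "with no weights needed except in the propagation zone,'' where $w$ supplies $\log|t|$. But the Dirac resolvent does not decay in $\lambda$, and each integration by parts costs a factor of $|x-y|$ (derivatives hitting the phases $e^{\pm iz r}$), so what one actually obtains is a polynomially weighted bound of the form $t^{-2}\la x\ra^{3/2}\la y\ra^{3/2}$ (Proposition~\ref{prop:high energy}); in the propagation zone $r\sim t$ the logarithmic weights supply only $\log^2 t$, far short of the $t$-factors incurred. The unweighted high-energy bound is only $\la t\ra^{-1}$, and requires the dyadic decomposition and the stationary phase Lemma~\ref{lem:high stat phase2}/\ref{stat phase general} (Proposition~\ref{prop:2highgamma}); the logarithmic-weight statement at high energy is then recovered exactly by the same $\min$/$\log^2$ interpolation. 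So both at low and at high energy your sketch is missing the mechanism (cancellation of $1/t$ terms, and interpolation of two genuinely different bounds, respectively) that makes the stated estimate attainable.
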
 
It is worth noting that the free Dirac equation does not satisfy this estimate due to the  s-wave  resonances at the threshold energies. In particular, in the proof of  Theorem~\ref{th:main} we encounter several terms behaving like $\f1t$ for large $t$ including one  term coming from the free part in the Born series. However, in the regular case these terms cancel each other in pairs. Our earlier results in \cite{eg3,ebru} on the  Schr\"odinger's equation rely on similar observations.   

We also have the following  global decay estimate:
\begin{theorem}\label{th:main1} 
	Assume that the matrix valued potential $V(x)$ is self-adjoint, with continuous entries satisfying $|V_{ij}(x)| \les \la x \ra ^{-\delta}$ for $\delta > 3 $. If the threshold energies $\pm m$ are regular, or if there are s-wave resonances only, then the kernel of the solution operator satisfies
	$$
		\sup_{x,y\in \R^2} \big|[e^{-itH} P_{ac}(H) \la H \ra ^{-\f 72 -} ](x,y) \big| \les \la t\ra ^{-1}. 
	$$
	As a consequence, we obtain the mapping estimate
	$$
		\big\| [e^{-itH} P_{ac}(H) \la H \ra ^{-\f72 -} f\big\|_{L^\infty(\R^2)} \les \la t\ra^{-1} \| f\|_{L^1(\R^2)}.
	$$
\end{theorem}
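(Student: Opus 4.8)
\textbf{Proof proposal for Theorem~\ref{th:main1}.}
The plan is to work on the spectral side via Stone's formula. Writing $H=D_m+V$ and denoting by $\mR_V^\pm(\lambda)=(D_m+V-(\lambda\pm i0))^{-1}$ the limiting resolvents — which exist off the thresholds by the limiting absorption principle under the stated decay $\delta>3$ and the absence of embedded eigenvalues — one has
\begin{equation*}
\big[e^{-itH}P_{ac}(H)\la H\ra^{-\f72-}\big](x,y)\;=\;\frac{1}{2\pi i}\int_{|\lambda|\geq m}e^{-it\lambda}\,\la\lambda\ra^{-\f72-}\,\big[\mR_V^+(\lambda)-\mR_V^-(\lambda)\big](x,y)\,d\lambda .
\end{equation*}
Fix a smooth cutoff $\chi$ equal to $1$ on a small neighborhood of $\{-m,m\}$ and supported in a slightly larger one, and split the integral as $\chi+(1-\chi)$ into a low-energy and a high-energy part; the two thresholds are handled by identical arguments, so I describe the work near $\lambda=m$. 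Throughout I use the symmetric resolvent identity $\mR_V(\lambda)=\mR_0(\lambda)-\mR_0(\lambda)\,v\,M(\lambda)^{-1}\,v\,\mR_0(\lambda)$, where $V=vUv$ with $v=|V|^{1/2}$ and $M(\lambda)=U+v\mR_0(\lambda)v$, together with the factorization \eqref{eq:Dmpm}, $\mR_0(\lambda)=(D_m+\lambda)R_0(\lambda^2-m^2)$, which reduces everything to the two-dimensional free Schr\"odinger resolvent with kernel $\tfrac{i}{4}H_0^{(1)}(\sqrt{\lambda^2-m^2}\,|x-y|)$.

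For the high-energy part I would use the large-argument asymptotics of $H_0^{(1)}$ and the uniform boundedness of $M(\lambda)^{-1}$ on suitable weighted $L^2$ spaces for $|\lambda|$ away from $\{-m,m\}$ to see that the integrand is a smooth, symbol-type function of $\lambda$, with at most the growth contributed by the factor $D_m+\lambda$. Repeated integration by parts in $\lambda$ — each derivative landing either on $e^{-it\lambda}$, gaining a factor $|t|^{-1}$, or on the resolvent factors and the weight $\la\lambda\ra^{-\f72-}$ — then yields decay $|t|^{-1}$; the underlying stationary-phase mechanism is that of the two-dimensional Klein--Gordon dispersion $\sqrt{|\xi|^2+m^2}$, whose Hessian degenerates as $|\xi|\to\infty$, and the power $\f72$ in the weight is precisely what absorbs the resulting derivatives while keeping the $\lambda$-integral convergent.

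The heart of the proof is the low-energy part near $\lambda=m$. I would substitute $z=\sqrt{\lambda^2-m^2}$, so that $z\to 0^+$, and insert the small-$z$ expansion $R_0(z^2)(x,y)=-\tfrac{1}{2\pi}\log z+g(x,y)+E(z;x,y)$, with $g$ collecting the spatial logarithm and the constant and $E$ a convergent higher-order error, together with the corresponding expansion of $(D_m+\lambda)R_0(\lambda^2-m^2)$. A structural point is that the $\log z$–singular part of $\mR_0(\lambda)$ is a constant, finite-rank term — it is annihilated by the differential part $-i\alpha\cdot\nabla$ and survives only through $m\beta+\lambda$ — so that $M(\lambda)$ is the sum of an invertible constant matrix, a $\log z$–times–finite-rank term, and a bounded remainder. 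Inverting $M(\lambda)$ near $z=0$ is the crux: by the classification of threshold spectral subspaces for the two-dimensional Dirac operator in \cite{egd}, in the regular case $M(\lambda)^{-1}$ admits an asymptotic expansion in powers of $z$ and $(\log z)^{-1}$ with bounded-operator coefficients — in particular no bare power of $\log z$ survives — while in the ``s-wave resonances only'' case there is in addition a finite-rank term carrying a $(\log z)^{-1}$–type coefficient built from the resonance functions, and again no bare $\log z$. Feeding this back into the resolvent identity and forming $\mR_V^+-\mR_V^-$, telescoped so that every term contains at least one factor of the smooth difference $\mR_0^+-\mR_0^-=\tfrac{i}{2}(D_m+\lambda)J_0(z|x-y|)$, produces a finite sum of oscillatory integrals of the schematic form $\int e^{-it\lambda}\chi(\lambda)\,a(\lambda)\,\big[\Gamma_1(x,\cdot)\Gamma_2(\cdot,y)\big]\,d\lambda$, where $a(\lambda)$ is an explicit scalar factor that is smooth, or of size $(\log z)^{-1}$, or a vanishing power of $z$ times at most $\log z$, and $\Gamma_1,\Gamma_2$ are kernels whose spatial growth is controlled by $\delta>3$. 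A van der Corput / stationary-phase lemma applied to each such integral — with the singular point $\lambda=m$ at the endpoint — then gives the bound $|t|^{-1}$, uniformly in $x,y$: spatial uniformity is recovered because the terms with the fastest growth in $x,y$ pair with the fastest decay of the potential, and $\delta>3$ is exactly the threshold making the spatial integrals hidden inside $M(\lambda)^{-1}$ converge.

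The main obstacle, as always in two dimensions, is this inversion of $M(\lambda)$ against the logarithmic singularity of the free resolvent: one must isolate the genuinely singular (constant, finite-rank) part of $\mR_0(\lambda)$, verify that after inversion and recombination no surviving scalar factor is a bare power of $\log z$ — which would force the unacceptable rate $|t|^{-1}\log|t|$ — and, in the s-wave case, identify and bound the extra resonance contribution, showing it still decays like $|t|^{-1}$, merely failing the improved rate of Theorem~\ref{th:main} (which is precisely why Theorem~\ref{th:main1} accommodates s-wave resonances while Theorem~\ref{th:main} cannot). Combining the low- and high-energy bounds yields the pointwise kernel estimate, and the $L^1\to L^\infty$ mapping bound follows at once by integrating the kernel against $f$.
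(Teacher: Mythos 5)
Your skeleton (Stone's formula, low/high energy split, symmetric resolvent identity, threshold expansions, and exploiting the cancellation in $\mR_V^+-\mR_V^-$) matches the paper, but two essential difficulties are not addressed, and your proposed handling of each would fail. At low energy the obstruction is that the leading singular piece of the free Dirac resolvent, $\mR_1(z)(x,y)\sim \alpha\cdot(x-y)/|x-y|^2$, is not locally square integrable in two dimensions, so $v\mR_1(\cdot,y)\notin L^2$ uniformly in $y$: the naive sandwich $\mR_0 v^* M^{-1} v\mR_0$, with the abstract $L^2$-bounded operator $M^{-1}$ (or $S$, $QD_0Q$) in the middle, cannot be estimated pointwise uniformly in $x,y$. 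This is precisely why \cite{egd} only obtained $H^1\to BMO$. Your telescoping ``so that every term contains a factor of $\mR_0^+-\mR_0^-$'' does not cure this, because the singular factor $\mR_1$ still sits directly against an operator known only through its $L^2$ bound. The paper's new device is to iterate the resolvent identity on $M_\pm^{-1}$ itself (the identities \eqref{FIT}--\eqref{SIT}), so that the dangerous configurations become $\mR_1V\mR_1$ (whose $+/-$ difference cancels outright in Stone's formula) and $\mR_1V\mR_0^\pm v^*$, which does map into $L^2$; nothing in your proposal plays this role. A smaller inaccuracy: in the ``s-wave resonances only'' case a bare logarithmic factor does survive, namely the term $h^\pm(z)S_1D_1S_1$ in the expansion of $M_\pm^{-1}$; it is rendered harmless not because it is absent but via the orthogonality $S_1\le Q$, which produces a polynomial gain in $z$ dominating $\log z$ (Lemma~\ref{lem:gamma j=5}).

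At high energy, plain integration by parts in $\lambda$ cannot give a bound uniform in $x,y$: each derivative that lands on the oscillatory factor $e^{\pm iz|x-y|}$ in $\mR_0^\pm$ (or in $\mR_H^\pm$) produces a factor $|x-y|$, so that route yields only weighted estimates of the type $\la x\ra^{3/2}\la y\ra^{3/2}t^{-2}$ — which is exactly what the paper does in the \emph{weighted} high-energy Proposition~\ref{prop:high energy}, not in Theorem~\ref{th:main1}. For the uniform bound one must treat the combined phase $\sqrt{z^2+m^2}\pm z|x-y|/t$ by stationary phase; because the Dirac resolvent does not decay in $\lambda$ (so Agmon bootstrapping is unavailable and one relies on the limiting absorption principle of \cite{egg}), the paper localizes to dyadic energies $z\approx 2^j$, proves $\min(2^{2j},2^{7j/2}|t|^{-1})$ per piece via Lemmas~\ref{stat phase general}, \ref{lem:high2} and \ref{lem:high1}, and sums using the weight $\la H\ra^{-7/2-}$. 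Your appeal to ``the Klein--Gordon stationary-phase mechanism'' gestures at this but is never carried out, and as written your integration-by-parts argument does not produce the claimed uniform $|t|^{-1}$ decay.
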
 
Finally we state a polynomially weighted estimate:  
\begin{theorem} \label{th:main2} 
	Assume that the matrix valued potential $V(x)$ is self-adjoint, with continuous entries satisfying $|V_{ij}(x)| \les \la x \ra ^{-\delta}$ for $\delta > 5 $. If the threshold energies  $\pm m$ are regular the kernel of the perturbed solution operator satisfies 
	$$
		\big|[e^{-itH} P_{ac}(H) \la H \ra ^{-3-} ](x,y) \big| \les \frac{w(x) w(y)}{t \log^2 (t)} + \frac{ \la x \ra ^{\f 3 2} \la y \ra ^{\f 3 2}}{t^{1+}}, \,\,\ t>2.
	$$
\end{theorem}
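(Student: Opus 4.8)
The plan is to start from Stone's formula, writing the kernel as
\[
[e^{-itH}P_{ac}(H)\la H\ra^{-3-}](x,y)=\frac{1}{2\pi i}\int_{|\lambda|\ge m}e^{-it\lambda}\,\la\lambda\ra^{-3-}\big[\mR_V^+(\lambda)-\mR_V^-(\lambda)\big](x,y)\,d\lambda,
\]
where $\mR_V^\pm(\lambda)=(D_m+V-(\lambda\pm i0))^{-1}$, and then cutting the $\lambda$-integral into a piece supported in a fixed neighborhood of the thresholds $\pm m$, via a smooth cutoff $\chi$, and its complement $1-\chi$. The $\chi$-piece will produce the first term of the claimed bound and the $(1-\chi)$-piece the second.

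For the $\chi$-piece the point is that on the compact set $\operatorname{supp}\chi$ the weight $\la\lambda\ra^{-3-}$ is a smooth multiplier, comparable above and below to the weight $\la\lambda\ra^{-7/2-}$ of Theorem~\ref{th:main}. Hence the entire near-threshold analysis from the proof of Theorem~\ref{th:main} applies without change: the resolvent expansions of $\mR_V^\pm$ around $\pm m$ in the regular case and, in particular, the cancellation in pairs of the several $\f1t$-terms --- including the one coming from the free part of the Born series, as indicated after the statement of Theorem~\ref{th:main} --- which is what produces the $\f1{t\log^2 t}$ rate together with the logarithmic weights. This bounds the $\chi$-piece by $\f{w(x)w(y)}{t\log^2 t}$.

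For the $(1-\chi)$-piece, which lives over energies bounded away from $\pm m$ (on bounded intervals via the limiting absorption principle, and up to infinity via high-energy resolvent bounds), I would insert a finite Born expansion $\mR_V=\sum_{k=0}^{K}(-1)^k(\mR_0 V)^k\mR_0+(-1)^{K+1}(\mR_0 V)^{K+1}\mR_V$ with $K$ fixed. Using \eqref{eq:Dmpm} and the explicit two-dimensional free resolvent, each free term $(\mR_0 V)^k\mR_0$ becomes an oscillatory integral in $\lambda$ with phase $\Phi(\lambda)=-t\lambda\pm\sqrt{\lambda^2-m^2}\,\ell$, where $\ell=|x-x_1|+\cdots+|x_k-y|$, and with amplitude bounded by $\la\lambda\ra^{-3-}$ times Bessel/Hankel symbols and the polynomial factors from $D_m+\lambda$. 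When $\ell\not\sim t$ the phase is non-stationary, $|\Phi'(\lambda)|\gtrsim t+\ell$ away from the thresholds, so repeated integration by parts gives $t^{-N}$, the spatial factors created at the inner vertices being absorbed by the decay $\delta>5$ of $V$ so that only bounded powers of $\la x\ra,\la y\ra$ survive. When $\ell\sim t$ --- in particular when $|x-y|\sim t$ --- stationary phase in $\lambda$ yields only $\les t^{-1}$, but then $\la x\ra\la y\ra\gtrsim|x-y|\sim t$, so $t^{-1}\les\la x\ra^{3/2}\la y\ra^{3/2}t^{-1-}$ and the claimed estimate holds on that range. The remainder term $(\mR_0 V)^{K+1}\mR_V$ is handled by the high-energy weighted resolvent bound $\|\la x\ra^{-\f12-}\mR_V^\pm(\lambda)\la x\ra^{-\f12-}\|\les\la\lambda\ra^{-1}$, analogous bounds on the $\mR_0 V$ factors, and $\delta>5$ once more.

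The main difficulty is this high-energy part: with only the smoothing $\la H\ra^{-3-}$ rather than $\la H\ra^{-7/2-}$, the amplitude of the $k$-th free Born term grows like $\la\lambda\ra^{\f{k+1}{2}-3-}$, so the Born series genuinely must be truncated and its tail handled by weighted resolvent estimates, and one must track the spatial weights produced by differentiating the Bessel/Hankel phases carefully enough to see that they do not exceed $\la x\ra^{3/2}\la y\ra^{3/2}$ after summation --- the hypothesis $\delta>5$ is precisely what makes this bookkeeping close. A secondary and routine step is to verify that the pairwise cancellations near the thresholds in Theorem~\ref{th:main} are unaffected by replacing $\la H\ra^{-7/2-}$ with $\la H\ra^{-3-}$, which is immediate since both are smooth, positive, and bounded on a neighborhood of $\pm m$.
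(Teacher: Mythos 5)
There is a genuine gap, and it is in the part you treat as already done. Your low-energy argument is circular: in this paper Theorem~\ref{th:main} is not proved independently and then reused here --- it is \emph{deduced} from Theorems~\ref{th:main1} and~\ref{th:main2} by the interpolation $\min(1,a/b)=\log^2a/\log^2b$. The ``near-threshold analysis producing the $1/(t\log^2t)$ rate with logarithmic weights'' that you invoke for the $\chi$-piece is precisely the content of Theorem~\ref{th:main2} at small energy, i.e.\ the thing to be proved. The unweighted low-energy analysis (Theorem~\ref{th:main1}) only yields $\la t\ra^{-1}$ via the crude bound $|I(\Gamma)|\les t^{-1}\|\partial_z[\chi\Gamma]\|_{L^1}$; to do better one must keep the boundary term in the integration by parts exactly, identify the $\frac{e^{-imt}}{t}$-terms explicitly (the free term contributes $-\frac{me^{-imt}}{t}M_{11}$, and the terms $\mR_7^\pm v^*M_\pm^{-1}v\mR_7^\pm$, $\mR_6 v^*M_\pm^{-1}v\mR_7^\pm$, $\mR_6 v^*M_\pm^{-1}v\mR_6$ contribute matching terms through the identity $M_{11}v^*SvM_{11}=\|(a,c)\|^2M_{11}$ and the orthogonality $M_{11}v^*Q=QvM_{11}=0$), verify their pairwise cancellation, and show the remainders are $\widetilde O_2(\log^{-2}z)$ with weights $w(x)w(y)$ or $\widetilde O_2(z^{1/2-})$ with weights $\la x\ra^{3/2}\la y\ra^{3/2}$, which after two integrations by parts give $1/(t\log^2 t)$ and $t^{-1-}$ respectively. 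This requires control of \emph{two} $z$-derivatives of all error terms (hence the new bound on $\partial_z^2E_0^\pm$ and the hypothesis $\delta>5$), none of which your proposal supplies or can borrow from an already-proved statement.

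Your high-energy treatment is closer in spirit to the paper (finite Born expansion, remainder via the limiting absorption principle, two integrations by parts exploiting the polynomial weights so that no dyadic decomposition or stationary phase is needed), but it contains a concrete error: you bound the remainder with $\|\la\cdot\ra^{-\f12-}\mR_V^\pm(\lambda)\la\cdot\ra^{-\f12-}\|\les\la\lambda\ra^{-1}$. For the Dirac operator the resolvent does \emph{not} decay in the spectral parameter --- even the free resolvent fails this, as the paper emphasizes --- and the available bound \eqref{eqn:lap} is only uniform boundedness of $\partial_\lambda^k\mR_V^\pm$ on weighted $L^2$. This is likely repairable because the amplitude already carries $\la\lambda\ra^{-3-}$ while the free kernels and their derivatives grow only like $z^{1/2}$ with weights $\la x\ra^{3/2}\la y\ra^{3/2}$, but as written the remainder estimate rests on a false decay assumption. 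Also, your case $\ell\sim t$ should be argued via the potential decay (if $\ell\sim t$ but $|x-y|\ll t$ some intermediate variable is of size $\gtrsim t$, and $\la x_i\ra^{-\delta}$ absorbs it) rather than the assertion $\la x\ra\la y\ra\gtrsim\ell$, which need not hold.
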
 
As in \cite{eg3} and \cite{ebru}, to obtain Theorem~\ref{th:main} we interpolate the results of Theorems~\ref{th:main1} and \ref{th:main2} using
$$ 
	\min\bigg(1, \frac{a}{b}\bigg) =  \frac{\log^2 a}{\log^2 b}, \,\,\ a,b>2.
$$
We note that the continuity assumption on the entries of the potential is needed only in the large energy regime to use the limiting absorption principle in \cite{egg}.  The loss of derivatives on the initial data embodied in the negative powers of $H$ are also a high energy issue.  

We prove our dispersive estimates by considering the Dirac solution operator as an element of the functional calculus.  Specifically, we employ the Stone's formula to see
\be\label{eq:Stone}
	e^{-itH}P_{ac}(H)f =\frac{1}{2\pi i} \int_{\sigma_{ac}(H)} e^{-it\lambda} [\mR_V^+(\lambda)-\mR_V^-(\lambda) ] f  \, d\lambda.
\ee
Here the difference of the perturbed resolvents $\mR_V^\pm(\lambda)=\lim_{\epsilon\to 0^+} (D_m+V-(\lambda \pm i\epsilon))^{-1}$ provides the spectral measure.  These operators are well defined between weighted $L^2$ spaces by the limiting absorption principle, \cite{agmon,BaHe,GM,egg}.
 
The literature on the perturbed Dirac equation is smaller than that on other dispersive equations such as the Schr\"odinger, wave and Klein-Gordon.  D'Ancona and Fanelli \cite{DF} were the first, to the authors' knowledge, to study the pointwise time decay for the perturbed Dirac evolution.  They studied the three dimensional massless Dirac equation and related wave equations with small electromagnetic potentials.  Escobedo and Vega, \cite{EV}, established dispersive and Strichartz estimates for the three dimensional free Dirac equation to study a semi-linear Dirac equation.  Boussaid, \cite{Bouss1}, proved dispersive estimates on Besov spaces, and  on weighted $L^2$ spaces for the massive three dimensional Dirac equation, and applied these estimates to studying  `particle-like solutions' for a class of non-linear Dirac equations.  See also the recent works of Boussaid and Comech on non-linear Dirac equations, \cite{BC1,BC2}.  

The existence of threshold resonances or eigenvalues are known to affect the dispersive estimates in the case of the Schr\"odinger evolution, \cite{JenKat,Mur,ES,Yaj3,ES2,eg2,Bec}.  The effect of threshold obstructions for the massive three dimensional Dirac equation was studied by the authors in \cite{EGT}.  The threshold resonance structure is more complicated in the two dimensional case; only the effect of the `s-wave' resonance on the dispersive estimates has been established, see \cite{egd} and Theorem~\ref{th:main1} above.

Smoothing and/or Strichartz estimates for the Dirac equation have been established by various authors, see  \cite{BDF,C,CS,egd,egg} for example.  In the two dimensional case, Kopylova considered estimates on weighted $L^2$ spaces, \cite{kopy}, which had roots in the work of Murata, \cite{Mur}.  In \cite{BH}, Bejenaru and Herr obtained frequency-localized  estimates for the free equation   in two dimensions  to study the cubic non-linear Dirac equation. Dispersive estimates for one-dimensional Dirac equation was considered in \cite{CTS}. 

Our approach relies on a detailed analysis of the resolvent operators.  We follow the strategy employed to analyze the two-dimensional Schr\"odinger equation set out by Schlag in \cite{Sc2} and in our earlier works \cite{eg2,eg3,eg4,ebru,egd}.  Extending these results to other dispersive equations such as the wave equation is  non-trivial, see \cite{Gwave,egd}. 

We briefly recall some spectral theory for the Dirac operator.  For the class of potentials we consider, Weyl's criterion implies that the essential spectrum coincides  with the spectrum $(-\infty,-m]\cup [m,\infty)$ of the free operator \cite{Thaller}. There is no singular continuous spectrum \cite{BaHe,GM}, and no embedded eigenvalues in $(-\infty,-m)\cup (m,\infty)$ \cite{BG1,BC1}. In addition there can only be finitely many eigenvalues in the gap $[-m,m]$ \cite{GM,egd}.


To establish the high energy bound in Theorems~\ref{th:main1} and \ref{th:main2}, one also needs a limiting absorption principle  for the perturbed resolvent operator of the form:
\be\label{eqn:lap1}
\sup_{|\lambda|>\lambda_0}\|\partial_\lambda^k \mR_V^\pm (\lambda)\|_{L^{2,\sigma}\to L^{2,-\sigma}} \les 1,
\qquad \sigma > \f12 +k, \,\,\,\,k=0,1,2
\ee  
holds for any $\lambda_0>m$. Unlike  
Schr\"odinger, the Dirac resolvent  does not decay in the spectral parameter $\lambda$ as $\lambda\to \infty$ even for the free resolvent, \cite{Yam}.  As a consequence, Agmon's bootstrapping  argument \cite{agmon} does not suffice to establish \eqref{eqn:lap1}.  Instead, the argument may only be used to establish uniform bounds on compact subsets of the continuous spectrum, see e.g. \cite{Yam,GM}. Recently, the first two authors and Goldberg, \cite{egg}, showed
$$
\sup_{ |\lambda|>m}\| \mR_V^\pm (\lambda)\|_{L^{2,\sigma}\to L^{2,-\sigma}} \les 1, \qquad \sigma > \f12 
$$
in any dimension $n\geq 2$ in both the massive and massless cases.  Using standard arguments, one can easily show \eqref{eqn:lap1} from this bound. 
Other limiting absorption principles have been obtained, see  \cite{Bouss1,DF,BG}. Georgescu and Mantoiu \cite{GM} obtained a limiting absorption principle in general dimensions on compact subsets. 

A threshold s-wave resonance may be characterized in terms of distributional solutions to $H\psi=0$ with $\psi \in L^\infty \setminus L^p(\R^2)$ for any $p<\infty$, \cite{egd}.  Such a resonance is natural as the free Dirac operator $D_m$ has s-wave resonances at threshold energies, $\psi_m=(1,0)^T$ and $\psi_{-m}=(0,1)^T$ at $\lambda=\pm m$ respectively.  We show that the existence an s-wave resonance for the perturbed Dirac equation satisfies the same dispersive bounds as the free Dirac equation.  The effect of threshold p-wave resonances and/or eigenvalues on the dispersive estimates are still open.  

The paper is organized as follows.  We begin in Section~\ref{sec:resexp}  by developing the necessary low energy expansions for the Dirac resolvent operators.  In Section~\ref{sec:nonweight}, we prove Theorem~\ref{th:main1} by considering energies close to, and separated from, the thresholds respectively.  In Section~\ref{sec:weighted} we prove Theorem~\ref{th:main2}, again by considering the different energy regimes.  Finally in Section~\ref{sec:tech} we collect  technical integral estimates needed to prove Theorems~\ref{th:main1} and \ref{th:main2}.

 \section{Resolvent expansion around threshold}\label{sec:resexp}
 In this section we obtain expansions for the resolvent operator $\mR^{\pm}_V(\lambda)$ in a neighborhood of the threshold energies using the properties of free Schr\"odinger resolvent operator $R_0(z^2)=(-\Delta-z^2)^{-1}$.  The expansions for  $\mR^{\pm}_0(\lambda)$ and $\mR^{\pm}_V(\lambda)$ we give in this section has been established by first and second authors in \cite{egd}.  We provide a few modifications that will be needed in Section~\ref{sec:weighted}.
 
  First, we review some estimates (see e.g. \cite{Sc2,eg2,eg3}) for $R_0^\pm(z^2)$. Recall that in $\R^n$ the integral kernel of the free resolvent is given by Hankel functions, see \cite{JN}.
For $n=2$ we have
\begin{align}\label{R0 def}
\begin{split}
	R_0^\pm(z^2)(x,y)=\pm {\f i 4 } H_0^{\pm}( z |x-y|) = {\f 1 4 } \big[\pm i J_0(z|x-y| ) -Y_0(z |x-y| ) \big]. 
	     \end{split}
	     \end{align}
Here $J_0(u)$ and $Y_0(u)$ are  Bessel functions of the first and second kind of order zero. We use the notation $f=\widetilde{O}(g) $ to indicate 
  \begin{align} 
  \label{frak}
   \frac{d^j}{d\lambda ^j} f = O(\frac{d^j}{d\lambda^j} g ), \hspace{4mm} j=0,1,2,....,. 
     \end{align}
If (\ref{frak}) is satisfied only for $j=0,1,2,..,k$ we use the notation $f=\widetilde{O}_k(g). $

 For $|u|\ll1$, we have the series expansions for Bessel functions,   
\begin{align}
 	J_0(u)&=1-\frac{1}{4}u^2+\frac{1}{64}u^4+\widetilde O_6(u^6),\label{J0 def}\\
 	Y_0(u)&=\frac2\pi \log(u/2)+\f{2\gamma}{\pi}+ \widetilde O(u^2\log(u)) \label{Y0 def}.
               \end{align} 
For any $\mathcal C\in\{J_0, Y_0\}$ we also have the following representation if $ |u| \gtrsim1 $.
     \begin{align}
     \label{largr}
      	\mathcal C(z)=e^{iu} \omega_+(u)+e^{-iu}\omega_-(u), \qquad
     	\omega_{\pm}(u)=\widetilde O\big((1+|u|)^{-\frac{1}{2}}\big)
      \end{align}

Lemma~\ref{lem:schro_resol} follows  from these expansions.
\begin{lemma} \label{lem:schro_resol} 
	For $z|x-y|<1$, we have the expansion
	\begin{align}\label{eq:freschroresolv}
		R_0^{\pm}(z^2) =g^\pm(z)+G_0
		+  \widetilde 
		O_2(z^2|x-y|^2\log(z|x-y|))
			\end{align}
 where
\begin{align}
	g^\pm(z)&=-\frac1{2\pi} \big(\log(z/2)+\gamma\big)\pm\frac{i}4 \label{g def}\\
	G_0f(x)&=-\frac{1}{2\pi}\int_{\R^2} \log|x-y|f(y)\,dy, \label{G0 def}
\end{align}
\end{lemma}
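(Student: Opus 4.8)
The plan is to establish the kernel identity \eqref{eq:freschroresolv} by substituting the small-argument Bessel expansions \eqref{J0 def}--\eqref{Y0 def} directly into the Hankel representation \eqref{R0 def}, and then regrouping so as to separate the spectral-parameter-dependent scalar $g^\pm(z)$ from the $z$-independent operator kernel $G_0$.

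Concretely, fix $x,y$ with $u:=z|x-y|<1$ and use \eqref{R0 def} to write $R_0^\pm(z^2)(x,y)=\pm\frac{i}{4}J_0(u)-\frac14 Y_0(u)$. From \eqref{J0 def} one gets $\pm\frac{i}{4}J_0(u)=\pm\frac{i}{4}+\widetilde O_2(u^2)$, since the subleading terms $-\tfrac14 u^2+\tfrac1{64}u^4+\widetilde O_6(u^6)$ are each $\widetilde O_2(u^2)$ for $u<1$. From \eqref{Y0 def}, $-\frac14 Y_0(u)=-\frac1{2\pi}\log(u/2)-\frac{\gamma}{2\pi}+\widetilde O_2(u^2\log u)$. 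The one genuinely algebraic step is the splitting $\log(u/2)=\log(z/2)+\log|x-y|$; feeding it in,
\[
R_0^\pm(z^2)(x,y)=\Big(-\frac1{2\pi}\big(\log(z/2)+\gamma\big)\pm\frac{i}{4}\Big)\ -\ \frac1{2\pi}\log|x-y|\ +\ \widetilde O_2(u^2)+\widetilde O_2(u^2\log u).
\]
The parenthesis is precisely $g^\pm(z)$ of \eqref{g def}, the middle term is precisely the integral kernel of $G_0$ in \eqref{G0 def}, and the leftover error---the polynomial $\widetilde O_2(u^2)$ part being dominated, as $u\to0$, by the logarithmically larger part---is exactly $\widetilde O_2(z^2|x-y|^2\log(z|x-y|))$. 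This gives \eqref{eq:freschroresolv}.

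The only step needing a little care---and the one I would regard as the main obstacle, though it is mild---is verifying that the error terms obey the two-derivative-in-$z$ bound encoded in the notation $\widetilde O_2$ of \eqref{frak}: one must check that $\partial_z$ and $\partial_z^2$ of the discarded pieces $u^2$, $u^4$ (from $J_0$) and $u^2\log u$ (from $Y_0$), with $u=z|x-y|$, are controlled by the corresponding $z$-derivatives of $z^2|x-y|^2\log(z|x-y|)$ on the region $u<1$. Because $\partial_z u=|x-y|$, this reduces to a routine chain-rule computation; moreover it is already built into the $\widetilde O$-form in which \eqref{J0 def}--\eqref{Y0 def} are stated, so no estimate beyond \eqref{frak} enters. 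I expect this bookkeeping of derivatives, rather than any analytic subtlety, to be the crux.
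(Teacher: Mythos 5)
Your proposal is correct and is essentially the paper's argument: the paper simply states that Lemma~\ref{lem:schro_resol} ``follows from these expansions,'' i.e.\ from substituting \eqref{J0 def}--\eqref{Y0 def} into \eqref{R0 def} and splitting $\log(u/2)=\log(z/2)+\log|x-y|$, exactly as you do. Your additional remarks on checking the $\widetilde O_2$ derivative bounds are the routine bookkeeping the paper leaves implicit, so there is nothing further to add.
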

The following analysis is performed on the positive portion  $[m,\infty)$ of the spectrum $H$. See Remark~\ref{rmk:resonances}-$i$ below for the negative branch, $(-\infty,-m]$. We write $\lambda=\sqrt{m^2+z^2}$ with 
$0<z\ll 1$. Using  \eqref{eq:Dmpm} we have
\begin{multline}\label{eq:dr1}
	\mR_0^\pm(\lambda)=\left[-i\alpha \cdot \nabla + m \beta +\sqrt{m^2+z^2} I\right]R_0^\pm(z^2) = \\  
	\left[-i\alpha \cdot \nabla + m (\beta +I)+\frac{z^2}{2m} I +\widetilde O(z^4) I\right]R_0^\pm(z^2).
\end{multline}
We now employ the following notational conventions. The operators  $M_{11}$ and $M_{22}$
are defined to be matrix-valued operators with kernels 
\begin{align*}
	M_{11}(x,y)=\left(\begin{array}{cc}
		1 & 0 \\ 0 & 0
	\end{array}\right), \qquad M_{22}(x,y)=\left(\begin{array}{cc}
		0 & 0 \\ 0 & 1
	\end{array}\right).
\end{align*}
We also define the projection operators $I_1, I_2$ by
\begin{align*}
	I_1 \left(\begin{array}{c}
		a \\ b\end{array}\right)=\left(\begin{array}{c}
		a \\0\end{array}\right),\qquad
	I_2 \left(\begin{array}{c}
		a \\ b\end{array}\right)=\left(\begin{array}{c}
		0 \\ b\end{array}\right).	
\end{align*}
Using \eqref{eq:freschroresolv} and \eqref{eq:dr1}, we have (for $z|x-y|<1$, $0<z\ll 1$, $\lambda=\sqrt{z^2+m^2}$)
\begin{multline} \label{r0temp}  
	\mR_0^\pm(\lambda)=	 
	\left[-i\alpha \cdot \nabla + 2m I_1 +\frac{z^2}{2m} I +\widetilde O(z^4) I\right]    \\                                                             
	\left[g^\pm(z)+G_0 + \widetilde O_2(z^2|x-y|^2\log(z|x-y|))\right].
\end{multline}
We define the function $\log^-(y):=-\log (y) \chi_{\{0<y<1\}}$ and use the following slightly modified lemma from \cite{egd}.
 \begin{lemma}\label{lem:R0exp}	
	We have the following expansion for the kernel of the free resolvent, $\lambda=\sqrt{m^2+z^2},$ $0<z\ll 1$
	\begin{align}
	 \label{eq:R0exp}
		\mR_0^{\pm}(\lambda)(x,y) =2m  g^\pm (z) M_{11}+\mathcal G_0(x,y)
		+E_0^{\pm}(z)(x,y),
	\end{align} 
	where
	\begin{align} \label{def:mG0}
	\mathcal G_0&=-i\alpha \cdot \nabla G_0 
	+2mG_0 I_{1}
\end{align} $E_0^{\pm}$ satisfies the bounds 
	$$
		|E_0^{\pm}|\les z^{k }(|x-y|^{k }+\log^-|x-y|), \,\,
		|\partial_z E_0^{\pm}|\les z^{k-1}(|x-y|^{k }+\log^-|x-y|),  
	$$
	for any $\frac12 \leq k<2$.
	Furthermore, we have
	$$ 
	|\partial^2_z E_0^{\pm}|\les z^{-\f 12}(|x-y|^{\f 32 }+\log^-|x-y|).
	$$
\end{lemma}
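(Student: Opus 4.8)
The plan is to derive \eqref{eq:R0exp} and the bounds on $E_0^\pm$ directly from the operator identity \eqref{r0temp} by tracking which terms are singular, which are $\lambda$-independent to leading order, and which go into the error term $E_0^\pm$. Write $\mR_0^\pm(\lambda) = [-i\alpha\cdot\nabla + 2mI_1 + \tfrac{z^2}{2m}I + \widetilde O(z^4)I]\,[g^\pm(z) + G_0 + \widetilde O_2(z^2|x-y|^2\log(z|x-y|))]$ and expand the product into nine groups of terms. The term $2mI_1 g^\pm(z)$ is the only one that is singular as $z\to 0$ (via $\log z$ inside $g^\pm$); since $I_1$ acting as multiplication has kernel $M_{11}$, this produces the stated leading term $2mg^\pm(z)M_{11}$. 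The terms $-i\alpha\cdot\nabla G_0$ and $2mG_0 I_1$ (the latter being $2mG_0 M_{11}$ as an operator — note $G_0 I_1 = I_1 G_0$ since $G_0$ is scalar and $I_1$ is a constant projection) combine to give $\mathcal G_0$ as in \eqref{def:mG0}. Everything else — namely $-i\alpha\cdot\nabla$ hitting $g^\pm(z)$ (which is zero, $g^\pm(z)$ being constant in $x$), the $\tfrac{z^2}{2m}I$ and $\widetilde O(z^4)I$ terms hitting $g^\pm(z)$, $G_0$, and the $\widetilde O_2(z^2|x-y|^2\log\cdots)$ remainder, plus $(-i\alpha\cdot\nabla + 2mI_1)$ hitting the $\widetilde O_2$ remainder — is collected into $E_0^\pm(z)(x,y)$.

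The core of the argument is then the pointwise and derivative bounds on $E_0^\pm$. First I would record the elementary estimate $z^a |\log z| \lesssim z^{a-\epsilon}$ for $z$ small and any $\epsilon>0$, so that the logarithmic factor in $g^\pm(z)$ and in the Schrödinger remainder can always be absorbed at the cost of an arbitrarily small power. For the $\tfrac{z^2}{2m}g^\pm(z)$-type contributions, $z^2|\log z| \lesssim z^k$ for any $k<2$; for $\tfrac{z^2}{2m}G_0$, one gets $z^2\log^-|x-y|$ after noting $|G_0(x,y)| \lesssim |\log|x-y||$ for $|x-y|$ small and is bounded (absorbed into the $z^4$ or handled by the full expansion) for $|x-y|$ large — here one uses $z|x-y|<1$. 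For the terms involving the Schrödinger remainder $\widetilde O_2(z^2|x-y|^2\log(z|x-y|))$, I would split $z^2|x-y|^2|\log(z|x-y|)| = z^k |x-y|^k \cdot z^{2-k}|x-y|^{2-k}|\log(z|x-y|)|$ and use $z|x-y|<1$ together with $z^{2-k}|x-y|^{2-k}|\log(z|x-y|)| \lesssim 1$ (since $u^{2-k}|\log u|$ is bounded for $0<u<1$ when $k<2$), giving the bound $z^k|x-y|^k$; the action of $-i\alpha\cdot\nabla$ costs one factor of $z$ by scaling and is handled by the $\widetilde O_2$ notation, which tracks exactly these $z$-derivatives. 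The $z$-derivative bounds follow by the same splitting after differentiating: each $\partial_z$ either lowers the power of $z$ by one (giving the $z^{k-1}$ bound) or hits a $\log z$, which is again absorbed by a small power. For the second derivative $\partial_z^2 E_0^\pm$, the worst term is $\partial_z^2$ of the Schrödinger remainder after the derivative $-i\alpha\cdot\nabla$ has been applied; here the natural exponent is pushed to $k = \tfrac12$ and one differentiates twice, producing $z^{-1/2}$ together with $|x-y|^{3/2} + \log^-|x-y|$ — I would verify this by differentiating $z^{1/2}|x-y|^{1/2}\cdot\bigl(z^{3/2}|x-y|^{3/2}\log(z|x-y|)\bigr)$ term by term, again using $u^{3/2}|\log u|\lesssim 1$ and the boundedness of its first two $u$-derivatives (with appropriate singular factors) for $0<u<1$.

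I expect the main obstacle to be bookkeeping rather than any single hard inequality: one must carefully keep separate (i) the two roles of $I_1$ — as the matrix-valued multiplication operator $M_{11}$ versus as the fiberwise projection — and (ii) the fact that the Schrödinger remainder in Lemma~\ref{lem:schro_resol} is controlled only for $z|x-y|<1$, so that every step silently uses $0<z|x-y|<1$ to convert powers of $z|x-y|$ into either small powers of $z$ or into $\log^-|x-y|$. A second delicate point is that $-i\alpha\cdot\nabla$ applied to the remainder $\widetilde O_2(z^2|x-y|^2\log(z|x-y|))$ requires the remainder to be $\widetilde O_2$ in the $\lambda$-variable but one must also check the $x$-regularity; this is where I would lean on the fact, implicit in \cite{egd} and in the explicit Bessel expansions \eqref{J0 def}--\eqref{Y0 def}, that the remainder is a convergent series in $(z|x-y|)^2$ with logarithmic coefficients, so differentiation in $x$ behaves like multiplication by $|x-y|^{-1}$ times a comparable remainder. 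Once these conventions are fixed, the bounds on $E_0^\pm$ and its first two $z$-derivatives reduce to the one-variable estimate $u^{a}|\log u|^b \lesssim 1$ for $0<u<1$, $a>0$, applied termwise.
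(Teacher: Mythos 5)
Your proposal only treats the regime $z|x-y|<1$: you build the expansion from \eqref{r0temp}, which itself is derived from Lemma~\ref{lem:schro_resol} and is only valid for $z|x-y|<1$, and you even state that ``every step silently uses $0<z|x-y|<1$.'' But Lemma~\ref{lem:R0exp} is asserted for all $x,y$ with $0<z\ll1$, and it must be, since it is fed into Lemma~\ref{lem:M_exp} through Hilbert--Schmidt norms over all of $\R^4$; the polynomial growth $|x-y|^k$ in the error bounds is exactly there to cover the region $z|x-y|\gtrsim 1$. In that region the argument is different: one uses the oscillatory representation \eqref{largr}, in which $z$-derivatives hit the phase $e^{\pm iz|x-y|}$ and produce factors of $|x-y|$, giving $|\partial_z^j\mR_0^\pm(\lambda)(x,y)|\les z^{-1/2}|x-y|^{j-1/2}$ for $z|x-y|\gtrsim1$ --- this is precisely the mechanism the paper singles out as the source of the growth in $|x-y|$, of the restriction $k\ge\frac12$ (needed so that $z^{-1/2}|x-y|^{j-1/2}\les z^{k-j}|x-y|^k$ when $z|x-y|\gtrsim1$), and of the new second-derivative bound $z^{-1/2}|x-y|^{3/2}$. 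Your attempt to produce the $|x-y|^{3/2}$ growth by differentiating the small-argument Bessel remainder is not where it comes from and does not substitute for this case.

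A second, related omission: for $z|x-y|\gtrsim 1$ you must also control the \emph{subtracted} leading terms, and individually $2mg^\pm(z)M_{11}$ and $2mG_0I_1$ are each of size $\log(1/z)$ near $z|x-y|\sim1$, which is not bounded by $z^k|x-y|^k\sim 1$ there. One needs the cancellation $g^\pm(z)+G_0(x,y)=-\frac1{2\pi}\big(\log(z|x-y|/2)+\gamma\big)\pm\frac i4$, so that only $\log(z|x-y|)\les (z|x-y|)^k$ appears, together with $|\nabla G_0|\sim|x-y|^{-1}\les z^k|x-y|^k$. Your proposal never performs this comparison. The small-argument bookkeeping you describe (splitting $z^2|x-y|^2\log(z|x-y|)$, absorbing logs into small powers, tracking $\nabla$ acting on the remainder via the explicit Bessel series) is essentially the right treatment of the regime $z|x-y|<1$ and matches the paper's route there, but without the $z|x-y|\gtrsim1$ analysis the stated bounds on $E_0^\pm$, $\partial_zE_0^\pm$, and especially $\partial_z^2E_0^\pm$ are not established.
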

We note that the bound on $\partial_z^2 E_0^{\pm}(z)$ is new.  The proof of this bound follows similarly to the proof of Lemma~2.2 in \cite{egd}.  We note that the growth in $|x-y|$ occurs from when derivatives hit the phase in \eqref{largr}, specifically From the  expansion \eqref{largr}, we see $|\partial_z^j\mR_0^\pm (\lambda)(x,y)|\les z^{-\f12}|x-y|^{j-\f12}$ when $z|x-y|\gtrsim 1$.  
\begin{rmk} \label{rmk:rdif}
Note that using \eqref{R0 def} one can obtain (with $r=|x-y|$)
\begin{align*}
	[\mR^{+}_0(\lambda)-\mR^{-}_0(\lambda)](x,y) &= [-i \alpha \cdot \nabla + 2m M_{11} +  		
	\widetilde{O}(z^2)][J_0(z)(x,y)] \\
	& = 2m M_{11} J_0(z) + \widetilde{O}_2( z^2 r+z^2) 
\end{align*}
 for $z|x-y|<1$. For $0<z\ll 1$ we obtain 
 \begin{align}\label{R0dif}
[\mR^{+}_0(\lambda)-\mR^{-}_0(\lambda)](x,y)& = mi M_{11}+ E_1(z)(x,y)
\end{align}
where 
$$
|E_1 (z)(x,y)| \les z^{\f12}  \la r\ra^{\f12}, \,\,\  |\partial_zE_1 (z)(x,y)| \les z^{-\f12}\la r\ra^{\f12}, \,\,\, |\partial^2_zE_1 (z)(x,y)| \les z^{-\f12}\la r\ra^{\f32}.
$$
\end{rmk} 

To obtain expansions for $\mathcal{R}^{\pm}_V(\lambda)=(D_m +V - (\lambda\pm i 0))^{-1} $ where $ \lambda= \sqrt {z^2 + m^2}$  we utilize the symmetric resolvent identity. Since  the matrix 
$V:\mathbb R^2 \to \mathbb C^{2}$ is self-adjoint,
the spectral theorem allows us to write
$$
	V=B^*\left(\begin{array}{cc}
		\lambda_1 & 0 \\ 0 &\lambda_2
	\end{array}\right)B
$$
with $\lambda_j \in \mathbb R$.  We 
further write $\eta_j =|\lambda_j|^{\f12}$,
\begin{align*}
	V=B^*\left(\begin{array}{cc}
	\eta_1 & 0 \\ 0 & \eta_2
	\end{array}\right) U \left(\begin{array}{cc}
	\eta_1 & 0 \\ 0 & \eta_2
	\end{array}\right)B = v^*Uv,
\end{align*}	
	where
\begin{align}\label{eq:vabcd} 
	U=\left(\begin{array}{cc}
	\textrm{sign}(\lambda_1) & 0 \\ 0 & \textrm{sign}(\lambda_2)
	\end{array}
	\right),\,\,\,\text{ and }\,\,
 v=\left(\begin{array}{cc}
		a & b\\ c &d
	\end{array}\right):=\left(\begin{array}{cc}
	\eta_1 & 0 \\ 0 & \eta_2
	\end{array}\right)  B.
\end{align}
Note that the entries of $v$ are $\les \la x\ra^{-\delta/2}$, provided that the entries of $V$ are $\les \la x\ra^{-\delta}$. This representation of $V$ allows us to employ the
symmetric resolvent identity to write the perturbed
resolvent $\mR_V(\lambda)=(D_m+V-\lambda)^{-1}$ as (with $\lambda=\sqrt{m^2+z^2}$, $0<z\ll 1$)
\begin{align}\label{symmresid}
	\mR_V(\lambda)=\mR_0(\lambda)-\mR_0(\lambda)v^* (U+v\mR_0(\lambda)v^*)^{-1}v\mR_0(\lambda).
\end{align}
Our goal  is to invert the operator
\be \label{eq:Mpm}
M^\pm (z)=U+v\mR_0^\pm \left(\sqrt{m^2+z^2}\right)v^*
\ee 
in a  neighborhood of $z=0$.
Recall that $\mR_0(\lambda)(x,y) =2m  g^\pm (z) M_{11}+\mathcal{G}_0(x,y)
		+E_0^{\pm}(z)(x,y)$.
Therefore,
$$
M^\pm (z)=U+v\mathcal{G}_0v^* + 2m  g^\pm (z) vM_{11}v^*+vE_0^{\pm}(z) v^*.
$$
Recalling \eqref{eq:vabcd}, for $f=(f_1,f_2)^T\in L^2 \times L^2$, we have 
 \begin{align*}
	vM_{11}v^*f(x)
	 =\left(\begin{array}{c}
		a(x)  \\ c(x) 
		\end{array}\right)  
		\int_{\R^2}\overline  a(y) f_1(y)+\overline{c}(y)f_2(y)\, dy.
\end{align*}
Thus, we arrive at
\begin{align}\label{vM11v}
	vM_{11}v^*=\|(a,c)\|_2^2 P,
\end{align}
where $P$ is the projection onto the vector $(a,c)^T$.  We also define the
operators $Q:=1-P$, $T:=U+v\mathcal G_0 v^*$, and let
$$
		\mathbbm g^{\pm}(z):=2m \|(a,c)\|_2^2 g^\pm(z).
$$
 We have 
\begin{lemma} \label{lem:M_exp} 
	For $ 0<z\ll 1$, we have 
	\begin{align*}
		M^{\pm}(z)= \mathbbm g^\pm (z) P+ T+M_0^{\pm}(z),
	\end{align*}
where,\footnote{The Hilbert-Schmidt norm of an integral operator
$K$ with integral kernel $K(x,y)$ is defined by
$$
	\|K\|_{HS}^2= \int_{\R^4} |K(x,y)|^2\, dx\, dy.
$$} for any $\frac{1}{2}\leq k<2$,
	\begin{align*}
		 \big\| \sup_{0<z\ll 1} z^{j-k} | \partial_z^j M_0^{\pm}(z)(x,y)|\big\|_{HS}\les 1,\,\,\,\,j=0,1, 
	\end{align*}
	if $|v_{ij}(x)|\lesssim \langle x\rangle^{-\beta}$ for some $\beta>1+k$. 
Moreover,
 \begin{align*}
  \big\| \sup_{0<z\ll 1} z^{{\f 1 2}} | \partial_z^2 M_0^{\pm}(z)(x,y)|\big\|_{HS} \les 1,	
  \end{align*} 
 if $|v_{ij}(x)| \lesssim \langle x\rangle^{- {\f 5 2}} $. 	
\end{lemma}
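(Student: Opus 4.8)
The plan is to reduce the bounds on $M_0^\pm(z)$ to the bounds on $E_0^\pm(z)$ supplied by Lemma~\ref{lem:R0exp} together with the Schur/Hilbert--Schmidt mechanics of conjugation by $v$. Recall from the discussion preceding the lemma that
$$
M^\pm(z) = U + v\mathcal G_0 v^* + 2m g^\pm(z) vM_{11}v^* + vE_0^\pm(z)v^* = T + \mathbbm g^\pm(z) P + vE_0^\pm(z)v^*,
$$
using \eqref{vM11v}. Hence $M_0^\pm(z) = vE_0^\pm(z)v^*$, and the entire statement is the assertion that conjugation of $E_0^\pm$ by $v$ turns the pointwise bounds of Lemma~\ref{lem:R0exp} into the claimed Hilbert--Schmidt bounds, provided the entries of $v$ decay fast enough. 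So the first step is simply to record this identity and observe that $\partial_z^j M_0^\pm(z)(x,y) = v(x) \big(\partial_z^j E_0^\pm(z)(x,y)\big) v^*(y)$, so that pointwise $|\partial_z^j M_0^\pm(z)(x,y)| \les \langle x\rangle^{-\beta}\langle y\rangle^{-\beta} |\partial_z^j E_0^\pm(z)(x,y)|$.

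The second step is to insert the bounds from Lemma~\ref{lem:R0exp}. For $j=0,1$ and $\tfrac12 \le k < 2$ we have $|\partial_z^j E_0^\pm(z)(x,y)| \les z^{k-j}\big(|x-y|^k + \log^-|x-y|\big)$, so
$$
\sup_{0<z\ll1} z^{j-k}|\partial_z^j M_0^\pm(z)(x,y)| \les \langle x\rangle^{-\beta}\langle y\rangle^{-\beta}\big(|x-y|^k + \log^-|x-y|\big).
$$
It then remains to check that the right-hand side is a Hilbert--Schmidt kernel, i.e.\ that $\int_{\R^4} \langle x\rangle^{-2\beta}\langle y\rangle^{-2\beta}\big(|x-y|^k + \log^-|x-y|\big)^2\,dx\,dy < \infty$. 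Using $|x-y|^k \les \langle x\rangle^k\langle y\rangle^k$ (for $k>0$) and the local integrability of $(\log^-|x-y|)^2$ in $\R^2$, this integral is finite as soon as $2\beta - 2k > 2$, i.e.\ $\beta > 1+k$, which is exactly the hypothesis; the $\log^-$ contribution is controlled by splitting into the region $|x-y|<1$, where it is a bounded-range convolution against an $L^1_{\rm loc}$ function and $\langle x\rangle^{-2\beta} \in L^1(\R^2)$ since $\beta>1$, and the complementary region where $\log^-$ vanishes. For the $j=2$ statement one proceeds identically with the bound $|\partial_z^2 E_0^\pm(z)(x,y)| \les z^{-1/2}\big(|x-y|^{3/2} + \log^-|x-y|\big)$ from the last display of Lemma~\ref{lem:R0exp}, giving
$$
\sup_{0<z\ll1} z^{1/2}|\partial_z^2 M_0^\pm(z)(x,y)| \les \langle x\rangle^{-5/2}\langle y\rangle^{-5/2}\big(|x-y|^{3/2} + \log^-|x-y|\big),
$$
and the Hilbert--Schmidt integrability now requires $2\cdot\tfrac52 - 2\cdot\tfrac32 = 2 > 2$?—this is borderline, so here one must be a little more careful: since $\langle x\rangle^{-5}\langle x\rangle^3 = \langle x\rangle^{-2}$ is \emph{not} integrable on $\R^2$, one should instead not bound $|x-y|^{3/2}$ crudely by $\langle x\rangle^{3/2}\langle y\rangle^{3/2}$ but rather keep one factor as $|x-y|^{3/2}$ and use that for, say, $|x|\ge 2|y|$ one has $|x-y|\sim |x|$ and $\langle x\rangle^{-5}|x|^{3/2}\langle y\rangle^{-5} \les \langle x\rangle^{-7/2}\langle y\rangle^{-5}$, whose square is integrable; the symmetric region and the region $|x|\sim|y|$ are handled the same way. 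This case-splitting is the one genuinely delicate point.

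The main obstacle, then, is not conceptual but the bookkeeping in the $j=2$ case: the decay rate $\langle x\rangle^{-5/2}$ is exactly tuned so that the naive product bound $|x-y|^{3/2}\les\langle x\rangle^{3/2}\langle y\rangle^{3/2}$ fails by a logarithm, and one must exploit the fact that $|x-y|$ is small precisely when $x$ and $y$ are comparable. I would isolate this in a short sublemma on the Hilbert--Schmidt norm of kernels of the form $\langle x\rangle^{-a}\langle y\rangle^{-a}(|x-y|^b + \log^-|x-y|)$, proving it is finite whenever $2a - 2b > 2$ \emph{or} ($2a-2b = 2$ and $a > b$, via the region decomposition above), and then apply it twice. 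Everything else—the algebraic identification $M_0^\pm = vE_0^\pm v^*$, pulling the weights out, and plugging in Lemma~\ref{lem:R0exp}—is routine.
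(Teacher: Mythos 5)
Your reduction is exactly the paper's: by \eqref{eq:Mpm} and Lemma~\ref{lem:R0exp} one has $M_0^\pm=vE_0^\pm v^*$, and everything then rests on the fact that $(|x-y|^{\ell}+\log^-|x-y|)\la x\ra^{-\beta}\la y\ra^{-\beta}$ is a Hilbert--Schmidt kernel for $\beta>1+\ell$; your verification of the $j=0,1$ cases is correct and coincides with the paper's one-line proof. The gap is in your treatment of $j=2$. The sublemma you propose --- that $\la x\ra^{-a}\la y\ra^{-a}(|x-y|^{b}+\log^-|x-y|)$ remains Hilbert--Schmidt in the equality case $2a-2b=2$ with $a>b$, via a decomposition into the regions $|x|\gtrsim|y|$, $|y|\gtrsim|x|$, $|x|\sim|y|$ --- is false, and the specific inequality you cite mixes squared and unsquared factors (you pair the squared weight $\la x\ra^{-5}$ with the unsquared growth $|x-y|^{3/2}$). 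For $a=\tfrac52$, $b=\tfrac32$ the kernel genuinely fails to be Hilbert--Schmidt: restricting to $|y|\le1$, $|x|\ge2$, where $|x-y|\ge|x|/2$, gives
\begin{align*}
\int_{\R^4}\la x\ra^{-5}\la y\ra^{-5}|x-y|^{3}\,dx\,dy \gtrsim \Big(\int_{|y|\le1}\la y\ra^{-5}\,dy\Big)\int_{|x|\ge2}|x|^{3}\la x\ra^{-5}\,dx=\infty,
\end{align*}
the last integral diverging logarithmically in $\R^2$. The divergence occurs precisely inside your region $|x|\ge2|y|$, where the correct bound on the squared kernel is $\la x\ra^{-2}\la y\ra^{-5}$, not integrable in $x$; so no splitting by the relative sizes of $|x|$ and $|y|$ can rescue the endpoint.

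The resolution is not a finer spatial analysis but a strict-inequality reading of the decay hypothesis: since $|\partial_z^2E_0^\pm|\les z^{-1/2}(|x-y|^{3/2}+\log^-|x-y|)$, the Hilbert--Schmidt fact requires $\beta>1+\tfrac32=\tfrac52$, i.e.\ $|v_{ij}(x)|\les\la x\ra^{-5/2-}$, which is exactly what is assumed in Lemma~\ref{lem:Minverseregular}, where this second-derivative estimate is actually used (and corresponds to $|V_{ij}(x)|\les\la x\ra^{-5-}$ in Theorem~\ref{th:main2}). With that hypothesis the same crude bound $|x-y|^{3/2}\les\la x\ra^{3/2}\la y\ra^{3/2}$ you used for $j=0,1$ closes the argument. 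You correctly sensed that the stated weight $\la x\ra^{-5/2}$ is at the endpoint, but your proposed fix asserts a false estimate, so as written the $j=2$ case of your proof does not go through.
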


\begin{proof} Note that by \eqref{eq:Mpm}, Lemma~\ref{lem:R0exp}, and the discussion above, we have
$ M_0=vE_0v^*$. 
Therefore the statement for $j=0,1 $ and for the second derivative follows from the error bounds in Lemma~\ref{lem:R0exp}, and the fact that 
$ (|x-y|^{\ell }+ \log^-|x-y|) \la x\ra^{-\beta} \la y\ra^{-\beta}$ is a Hilbert-Schmidt kernel for $\beta>1+\ell$ and $\ell>-1$. 
\end{proof}

We employ the following terminology from \cite{Sc2,eg2,eg3}:

\begin{defin}
	We say an operator $T:L^2(\R^2) \to   L^2(\R^2)$ with kernel
	$T(\cdot,\cdot)$ is absolutely bounded if the operator with kernel
	$|T(\cdot,\cdot)|$ is bounded from $  L^2(\R^2)$ to $ L^2(\R^2)$. 	
\end{defin}

We note that Hilbert-Schmidt and finite-rank operators are absolutely bounded operators.
As in the case of the Schr\"odinger operator, the invertibility of the leading term of $M^\pm(z)$ depends on the regularity of the threshold energy.    We recall Definition~4.3 in \cite{egd}.

\begin{defin}\label{resondef}
\begin{enumerate}
\item
 Let $Q=1-P$. We  say that $\lambda=m$ is a regular point of the spectrum of
	$H=D_m+V$ provided that $QTQ=Q(U+v\mathcal{G}_0v^*)Q$ is invertible on $Q(L^2\times L^2)$.
	If $QTQ$ is invertible, we denote $D_0:=(QTQ)^{-1}$ as an operator
	on $Q(L^2\times L^2)$. 
\item Assume that $m$ is not a regular point of the spectrum. Let $S_1$ be the Riesz projection
onto the kernel of $QT Q$ as an operator on $Q(L_2 \times L_2)$. Then $QT Q + S_1$ is invertible on $Q(L_2 \times L_2)$.  Accordingly, with a slight abuse of notation we redefine $D_0 = (QT Q+S_1)^{-1} $
as an operator on $Q(L_2 \times L_2)$. We say there is a resonance of the first kind at m if the
operator $T_1 := S_1T P T S_1$ is invertible on $S_1(L_2 \times L_2)$ and define $D_1 := T_1^{-1}$. 
\end{enumerate}
\end{defin}

\begin{rmk} \label{rmk:resonances}  
\begin{enumerate}[(i)]
\item We do our analysis in the positive portion of the spectrum $[m,\infty)$ and develop expansions of $\mathcal{R}_V$ around the threshold $\lambda=m$. One can do a similar analysis for the negative portion of the spectrum taking $\lambda = -\sqrt{z^2+m^2}$. In this case the perturbed equation has a threshold resonance or eigenvalue at $\lambda=-m$ is related to distributional solutions of  $ (H+ mI )g=0$, see Section~7 in \cite{egd} for a detailed characterization.
\item The operator $S_1$ is defined to be the Riesz projection on to the kernel of $QTQ$, see Definition~4.3 and Remark~4.4 in \cite{egd}.  In particular, we have that $S_1\leq Q$, so for any $\phi \in S_1 L^2$, $P\phi=0$, i.e.
$$M_{11}v^*\phi=0.$$
\item The operator $QD_0Q$ is absolutely bounded in $L^2\times L^2$, see Lemma~7.1 in \cite{egd}.
\end{enumerate}
\end{rmk}

The following Lemma is a slight modification of Lemma~4.5 in \cite{egd}.   In particular, we now need control of the second derivative of $E^\pm(z)$.

\begin{lemma}\label{lem:Minverseregular} Assume that $m$ is a regular point of the spectrum of $H$. Also assume that $|v_{ij}(x)|\les \la x\ra^{-\frac52-}$. Then 
$$(M^\pm(z))^{-1}=h^\pm(z)^{-1} S +QD_0Q+E^\pm(z)$$
where 
$$S=\left[\begin{array}{cc}
P& -PTQD_0Q\\ -QD_0QTP &QD_0QTPTQD_0Q
\end{array} \right],$$ 
 $h^\pm(z)=\mathbbm g^\pm(z)+\,$trace$\,(PTP-PTQD_0QTP)$,
 $S$ is a self-adjoint, finite rank operator, and
 \begin{align*}
		\big\| \sup_{0<z\ll 1} z^{j-1/2} | \partial_z^j E^{\pm}(z)(x,y)|\big\|_{HS}\les 1,\,\,\,\,j=0,1, 2
	\end{align*}
 
 \end{lemma}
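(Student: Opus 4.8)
The plan is to invert $M^\pm(z) = \mathbbm g^\pm(z) P + T + M_0^\pm(z)$ by a two–step Feshbach/Schur-complement reduction exactly as in Lemma~4.5 of \cite{egd}, while tracking the second $z$-derivative of the error, which is the new ingredient. First I would split the space via $P + Q = I$ and write $M^\pm(z)$ as a $2\times 2$ operator matrix with respect to this decomposition. The $QQ$-block is $QTQ + QM_0^\pm Q$, which is a small (in $z$) perturbation of the invertible operator $QTQ$; hence it is invertible for $0<z\ll1$ with inverse $QD_0Q + \widetilde{O}(z^{k})$-type corrections, where $D_0=(QTQ)^{-1}$. Applying the Schur complement formula with respect to the $QQ$-block, the invertibility of $M^\pm(z)$ is reduced to inverting the scalar ($P$-rank) operator
\be\label{eq:schurcomp}
  a^\pm(z) := P\big[\mathbbm g^\pm(z)P + T + M_0^\pm(z)\big]P - P(T+M_0^\pm(z))Q\,\big(QTQ+QM_0^\pm Q\big)^{-1}\,Q(T+M_0^\pm(z))P
\ee
on $PL^2$. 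Since $\mathbbm g^\pm(z)$ carries the logarithmic divergence $\sim -\tfrac{m}{\pi}\|(a,c)\|_2^2 \log(z/2)$, the term $\mathbbm g^\pm(z)P$ dominates, and $a^\pm(z) = h^\pm(z) P + (\text{error})$ with $h^\pm(z) = \mathbbm g^\pm(z) + \operatorname{trace}(PTP - PTQD_0QTP)$; one inverts it as $h^\pm(z)^{-1}P + (\text{lower order})$. Reassembling the block inverse then yields the stated formula $(M^\pm(z))^{-1} = h^\pm(z)^{-1}S + QD_0Q + E^\pm(z)$ with $S$ the finite-rank, self-adjoint operator displayed; self-adjointness and finite rank are immediate from the explicit formula for $S$ and the symmetry of $P$, $Q$, $T$, $D_0$.

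The bulk of the work is the derivative bookkeeping. I would expand the Neumann series $(QTQ+QM_0^\pm Q)^{-1} = QD_0Q - QD_0Q\,(QM_0^\pm Q)\,QD_0Q + \cdots$ and, likewise, expand $(h^\pm(z)P + \text{error})^{-1}$ as a geometric series in $h^\pm(z)^{-1}$ times the error. Every term in $E^\pm(z)$ is then a finite product of: the bounded, $z$-independent operators $P$, $Q$, $T$, $D_0$; the scalar $h^\pm(z)^{-1}$; and at least one factor of $M_0^\pm(z)$. The key estimates are: (a) $M_0^\pm(z)$ obeys the Hilbert–Schmidt bounds of Lemma~\ref{lem:M_exp}, namely $\|\sup_z z^{j-k}|\partial_z^j M_0^\pm|\|_{HS}\les 1$ for $j=0,1$ (any $\tfrac12\le k<2$) and $\|\sup_z z^{1/2}|\partial_z^2 M_0^\pm|\|_{HS}\les 1$ under the $\la x\ra^{-5/2-}$ decay hypothesis; (b) $h^\pm(z)^{-1} = \widetilde{O}((\log z)^{-1})$, and crucially each $z$-derivative of $h^\pm(z)^{-1}$ gains a power of $z^{-1}$ but loses only logarithmic factors, i.e. $|\partial_z h^\pm(z)^{-1}| \les z^{-1}(\log z)^{-2}$ and $|\partial_z^2 h^\pm(z)^{-1}|\les z^{-2}(\log z)^{-2}$, since $\mathbbm g^\pm{}'(z) = -\tfrac{m}{\pi}\|(a,c)\|_2^2\, z^{-1}$. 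Applying the product/Leibniz rule to a term with at least one $M_0^\pm$ factor, the worst case for $\partial_z^2$ puts both derivatives on a single $M_0^\pm$ (giving $z^{-1/2}$ after the weight, i.e. a net $z^{-1/2}$ with the $z^{1/2}$ absorbed), or one derivative on $M_0^\pm$ and one on $h^\pm{}^{-1}$ (giving $z^{-1}\cdot z^{0-} = z^{-1+}$, which is better than $z^{-1/2}$... actually worse — so this is the case to watch), or both on $h^\pm{}^{-1}$ times an undifferentiated $M_0^\pm$ (giving $z^{-2}$ times $z^{k}$ with $k$ close to $2$, i.e. $z^{-1/2}$ after choosing $k=3/2$). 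Choosing $k=\tfrac32$ throughout, every contribution is $\widetilde{O}_2(z^{1/2})$ in the sense that $\|\sup_z z^{j-1/2}|\partial_z^j E^\pm|\|_{HS}\les1$ for $j=0,1,2$.

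The main obstacle I anticipate is precisely the cross term in which one $z$-derivative falls on $h^\pm(z)^{-1}$ and the other on an $M_0^\pm$ factor: one gets $z^{-1}$ from $\partial_z h^\pm{}^{-1}$ and $z^{k-1}$ from $\partial_z M_0^\pm$, for a total of $z^{k-2}$, which equals $z^{-1/2}$ only when $k=\tfrac32$ — so the choice $k=\tfrac32$ is forced, and this is exactly why the hypothesis $|v_{ij}(x)|\les\la x\ra^{-5/2-}$ (so that $\beta = \tfrac52 = 1+k$ with $k=\tfrac32$) appears, and why the second-derivative bound in Lemma~\ref{lem:M_exp} is stated with the $z^{1/2}$ weight. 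A secondary point requiring care is that the Schur complement of a finite-rank ($P$) block is only a scalar multiple of $P$ to leading order; one must verify that the logarithmic growth of $\mathbbm g^\pm(z)$ genuinely dominates all the $z$-dependent corrections so that $a^\pm(z)$ in \eqref{eq:schurcomp} is invertible for all sufficiently small $z>0$, with the inverse enjoying the same derivative bounds — this follows because $|\mathbbm g^\pm(z)| \to \infty$ like $|\log z|$ while the corrections are $\widetilde{O}(z^{k})$, so the relevant geometric series converges with room to spare. Everything else is a routine, if lengthy, application of the product rule together with the absolute-boundedness of $P$, $Q$, and $QD_0Q$ (Remark~\ref{rmk:resonances}(iii)) to conclude the Hilbert–Schmidt bounds on $E^\pm(z)$.
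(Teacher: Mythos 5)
Your proposal is correct and follows essentially the paper's route: a Feshbach/Schur-complement inversion of the leading operator combined with a Neumann series in $M_0^\pm(z)$ and Leibniz-rule bookkeeping based on Lemma~\ref{lem:M_exp} (the paper first inverts $A(z)=\mathbbm g^\pm(z)P+T$ exactly via the Feshbach formula, getting $A^{-1}=h^{\pm}(z)^{-1}S+QD_0Q$ with $|\partial_z^jA^{-1}|\les z^{-j}$, and then expands $(I+M_0^\pm A^{-1})^{-1}$, rather than carrying $M_0^\pm$ inside the Schur complement, but this is only a reorganization). One small correction of motivation: since the $j=2$ target is $|\partial_z^2E^\pm|\les z^{-3/2}$ rather than $z^{-1/2}$, the cross term does not force $k=3/2$ (any $k\geq\tfrac12$ would do there); the hypothesis $|v_{ij}(x)|\les\la x\ra^{-5/2-}$ is really needed for the second-derivative Hilbert--Schmidt bound on $M_0^\pm$ in Lemma~\ref{lem:M_exp}.
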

 \begin{proof} 
	We consider only the `+' case, the `-' proceeds identically.  Let
 \begin{align*} A(z)= \mathbbm g^{+} (z) P+ T =  \left[\begin{array}{cc}
\mathbbm g^{+}(z) P + PTP & PTQ  \\ QTP &QTQ 
\end{array} \right].
 \end{align*}
Then by Feshbach formula (see  Lemma~2.8 in \cite{eg2}) we have  $A^{-1}(z) = h^{+}(z)^{-1} S +QD_0Q$. Hence, using the equality
$$
M^{+} (z) = A(z) + E_0^{+}(z) = (I + E_0^{+}(z) A^{-1}(z)) A(z),
$$
and Neumann series expansion we obtain
$$
(M^{+}(z))^{-1}=A^{-1}(z)(I + E_0^{+}(z) A^{-1}(z))^{-1}= h^{+}(z)^{-1} S +QD_0Q+E^{+}(z).
$$

Note that  as an absolutely bounded operator on $L^2 \times L^2$, $ | \partial_z^j \{A^{-1}(z)\} | \les z^{-j} $ for $j=0,1,2$. Hence, the bounds on $E_0^+ (z)$ in Lemma~~\ref{lem:R0exp} establish the statement.
  \end{proof}

\section{ Nonweighted dispersive estimate} \label{sec:nonweight}

In this section we prove Theorem~\ref{th:main}. We divide this section into three subsections. In the first two subsections we analyze the low energy portion of the Stone's formula, \eqref{eq:Stone}.  First we consider the case when the threshold energies are regular, then we consider the effect of the s-wave resonance(s). To do so, we take a smooth, even cut-off $\chi \in C_c^\infty(\R)$ with $\chi(z)=1$ for $|z|<z_0$ and $\chi(z)=0$ for $|z|>2z_0$.  
\begin{theorem} \label{thm:mainineq}
	Let $|V(x)| \les \la x \ra^{-3-}$. Then, if the threshold $ m$ is regular or if there is resonance of the first kind at $\lambda= m$, then we have
	\be\label{noweight}
		 \int_0^\infty e^{-it\sqrt{z^2+m^2}} \frac{z \chi(z) }{\sqrt{z^2+m^2}} \
		 [\mR_V^+(z)-\mR_V^-(z)](x,y) d\lambda = O( \la t\ra^{-1} ).
      \ee
	\end{theorem}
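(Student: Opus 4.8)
The plan is to feed the symmetric resolvent identity \eqref{symmresid} into \eqref{noweight}, writing $\mR_V^\pm(\lambda)=\mR_0^\pm(\lambda)-\mR_0^\pm(\lambda)v^*(M^\pm(z))^{-1}v\mR_0^\pm(\lambda)$, and to treat the free contribution and the Born contribution separately. After the substitution $\lambda=\sqrt{z^2+m^2}$ (so $d\lambda=\f{z}{\sqrt{z^2+m^2}}\,dz$ and $e^{-it\lambda}=e^{-itm}e^{-it(\lambda-m)}$) every piece becomes an endpoint oscillatory integral $\int_0^\infty e^{-it(\lambda(z)-m)}a(z)\,dz$ supported in $0<z\lesssim z_0$, whose only critical behaviour sits at the threshold $z=0$; here $\la H\ra^{-7/2-}$ is merely a bounded multiplier. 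The argument then reduces to checking that every amplitude $a(z)$ produced by the low energy expansions meets the hypotheses of the one-dimensional oscillatory integral estimates collected in Section~\ref{sec:tech} (integration by parts against the phase $\lambda(z)-m$, combined with splitting the $z$-integral at the scale $z\sim 1/|t|$); these are precisely why the new $\partial_z^2$-bounds in Lemma~\ref{lem:R0exp}, Lemma~\ref{lem:Minverseregular} and Remark~\ref{rmk:rdif} were recorded.

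For the free part I would use Remark~\ref{rmk:rdif}: $[\mR_0^+-\mR_0^-](z)(x,y)=im\,M_{11}+E_1(z)(x,y)$. The constant $im\,M_{11}$ contributes $\int_0^\infty e^{-it\lambda(z)}\f{z\chi(z)}{\sqrt{z^2+m^2}}\,dz$, which is $O(1/|t|)$ after a single integration by parts (only the boundary term at $z=0$ survives). For $E_1$, the bounds $|E_1|\les z^{\f12}\la r\ra^{\f12}$, $|\partial_z E_1|\les z^{-\f12}\la r\ra^{\f12}$, $|\partial_z^2 E_1|\les z^{-\f12}\la r\ra^{\f32}$ (with $r=|x-y|$) show that the amplitude $\f{z\chi(z)}{\sqrt{z^2+m^2}}E_1(z)$ vanishes like $z^{\f32}$ at the threshold with two derivatives under control, the spatial growth being harmless as for the free evolution; the estimates of Section~\ref{sec:tech} then give $O(\la t\ra^{-1})$.

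The Born contribution is the heart of the matter. Here I would substitute $\mR_0^\pm(z)=2mg^\pm(z)M_{11}+\mathcal G_0+E_0^\pm(z)$ from Lemma~\ref{lem:R0exp} on each side and, for $(M^\pm(z))^{-1}$, the expansion $h^\pm(z)^{-1}S+QD_0Q+E^\pm(z)$ of Lemma~\ref{lem:Minverseregular} in the regular case (respectively the resonance-of-first-kind expansion of \cite{egd}, carrying an extra $D_1$-block), and multiply everything out. Every resulting term is acceptable except one: $E_0^\pm$ and $E^\pm$ gain a factor $z^{\f12}$, $g^\pm(z)$ grows only logarithmically, $h^\pm(z)^{-1}$ decays logarithmically, the identity $QvM_{11}=0$ kills the $QD_0Q$-block whenever it is flanked by the leading $M_{11}$'s, and the logarithmic growth in $|x-y|$ carried by $\mathcal G_0$ is always paired against the decaying weights in $v$. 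The one dangerous term is
\[
(2mg^\pm(z))^2\,h^\pm(z)^{-1}\,M_{11}v^*SvM_{11}=\|(a,c)\|_2^2\,(2mg^\pm(z))^2\,h^\pm(z)^{-1}\,M_{11},
\]
where I used $M_{11}v^*S=M_{11}v^*$ on $P(L^2\times L^2)$ together with $M_{11}v^*vM_{11}=\|(a,c)\|_2^2\,M_{11}$; taken by itself its scalar factor grows like $\log z$. The point, and this is where regularity (or the first-kind-resonance hypothesis) enters, is that in the \emph{difference} of the $+$ and $-$ contributions this logarithmic growth cancels: writing $2mg^\pm(z)=\alpha(z)\pm i\f m2$ and $\mathbbm g^\pm(z)=\|(a,c)\|_2^2(\alpha(z)\pm i\f m2)$, so that $h^\pm(z)=\|(a,c)\|_2^2\,\alpha(z)+\mathrm{const}\pm i\|(a,c)\|_2^2\f m2$ with $\alpha(z)=-\f m\pi\log(z/2)+O(1)\to+\infty$, a direct computation gives
\[
\|(a,c)\|_2^2\Big[(2mg^+(z))^2 h^+(z)^{-1}-(2mg^-(z))^2 h^-(z)^{-1}\Big]=im+O\big((\log z)^{-1}\big),
\]
with derivative $O\big((z\log^2 z)^{-1}\big)$, and moreover this residual constant $im$ cancels exactly against the free contribution $im\,M_{11}$. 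Splitting off the constant and estimating the $O((\log z)^{-1})$ remainder by one integration by parts — using $\int_0\f{dz}{z\log^2 z}<\infty$, so the boundary term vanishes and the remaining integral converges — yields $O(\la t\ra^{-1})$.

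The obstacle I anticipate is twofold. Conceptually it is the bookkeeping: among the many products in the Born expansion one must isolate the unique term whose $\pm$-halves each blow up logarithmically at the threshold, verify the algebraic identities ($M_{11}v^*SvM_{11}=\|(a,c)\|_2^2 M_{11}$, $h^\pm=\mathbbm g^\pm+\mathrm{const}$, $\mathbbm g^\pm=\|(a,c)\|_2^2(2mg^\pm)$) that produce the cancellation, and carry the $|x-y|$-dependence through $\mathcal G_0$ (hence $G_0$) carefully so that it is absorbed by the weights of $v$; in the resonance-of-first-kind case all of this must be redone with the more singular resolvent expansion of \cite{egd}. Technically, the obstacle is that the amplitudes arising at $z=0$ are genuinely non-smooth — fractional powers $z^{\f12}$ and inverse logarithms $(\log z)^{-1}$ — so that naive integration by parts produces non-integrable singularities; the endpoint oscillatory integral lemmas of Section~\ref{sec:tech}, together with the two-derivative bounds established above, are exactly what make the threshold analysis go through.
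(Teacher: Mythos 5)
Your proposal misses the central difficulty that this theorem was written to overcome, namely the $L^1\to L^\infty$ (uniform in $x,y$) obstruction coming from the gradient part of $\mathcal G_0$. The kernel of $-i\alpha\cdot\nabla G_0$ (the operator $\mR_1$, resp.\ $\mR_6$) behaves like $|x-x_1|^{-1}$ near the diagonal, which is \emph{not} locally square integrable in $\R^2$; consequently $\mR_1 v^*$ does not map into $L^2$, and terms such as $\mathcal G_0 v^* QD_0Q\, v\,\mathcal G_0$ or $\mathcal G_0 v^* E^\pm v\,\mathcal G_0$, whose middle factor is known only to be (absolutely) bounded on $L^2$, cannot be bounded pointwise uniformly in $x,y$ by simply "multiplying everything out" and pairing singularities against the decay of $v$. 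This is precisely why \cite{egd} only obtained an $H^1\to BMO$ bound. The paper's proof resolves it by iterating the resolvent identity \emph{inside} $M^{-1}_\pm$, i.e.\ \eqref{FIT} and \eqref{SIT}, so that every singular kernel is separated from the abstract middle operators by an integration against $V$ and another free resolvent (so that, e.g., $\mR_1 V\mR_0^\pm v^*$ lies in $L^2$ uniformly in $x$ by Lemma~\ref{lem:potential}), and by noting that the worst term $\mR_1 V\mR_1$ is independent of the $\pm$ sign and cancels outright in the difference $\mR_V^+-\mR_V^-$ of the Stone formula. None of this appears in your argument. Moreover, the "one dangerous term" you isolate, the scalar $(2mg^\pm)^2/h^\pm$, is not the obstruction for this theorem: for an $O(\la t\ra^{-1})$ bound a boundary term of size $1/t$ at $z=0$ is perfectly acceptable, and the exact cancellation against the free $imM_{11}$ that you build your proof around is the mechanism behind the weighted $t^{-1}\log^{-2}t$ estimate of Theorem~\ref{th:main2}, not of Theorem~\ref{thm:mainineq}.

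A second gap concerns the regime $z|x-y|\gtrsim 1$. The expansions of Lemma~\ref{lem:R0exp} and Remark~\ref{rmk:rdif} carry constants growing in $r=|x-y|$ (factors $\la r\ra^{1/2}$, $\la r\ra^{3/2}$ after derivatives), so integration by parts at the threshold alone yields bounds that grow in $x,y$ rather than the required uniform ones; your remark that the spatial growth is "harmless as for the free evolution" is circular. Both for the free term (handled in the paper by citing Theorem~3.1 of \cite{egd}) and for all pieces with $\mR_H^\pm$ or $\mR_4^\pm$ in an outer slot, one must exploit the oscillation $e^{\pm izr}$ against the phase $e^{-it\sqrt{z^2+m^2}}$ via the stationary phase estimate of Lemma~\ref{stat phase} with phase $\sqrt{z^2+m^2}\pm zr/t$; your reduction of every term to an endpoint oscillatory integral whose only critical behaviour is at $z=0$ is therefore not correct as stated. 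Finally, the resonance case is not merely "an extra $D_1$-block": the operator $A=SS_1D_1S_1+S_1D_1S_1S$ enjoys the orthogonality $QvM_{11}=0$ on one side only, and the paper's Lemmas~\ref{lem:gamma j=5} and \ref{lem:gamma j=6} require genuinely one-sided substitutions ($\mR_0^-\to\mathcal G_0+E_0^-$, $\mR_{L,2}^+\to F$) that your sketch does not anticipate.
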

In Section~\ref{subnonhigh} we prove a high energy result restricted to dyadic energy levels.  In particular, Proposition~\ref{prop:2highgamma} asserts for $j\in \mathbb N$ and $\chi_j(z)$ a smooth cut-off to the interval $z\approx 2^j$,
\begin{align*}
	\sup_{x,y} \bigg|  \int_0^\infty e^{-it\sqrt{z^2+m^2}} \frac{z \chi_j(z) }{\sqrt{z^2+m^2}} \
		 [\mR_V^+(z)-\mR_V^-(z)](x,y) d\lambda \bigg| \les \min(2^{2j}, 2^{7j/2} |t|^{-1}).
\end{align*}
Combing these bounds, and summing over $j$, proves Theorem~\ref{th:main1}.

\subsection{Small energy dispersive estimate in the case that $m$ is regular}

As usual we prove Theorem~\ref{th:main1} considering the Stone's formula. Recalling the symmetric resolvent identity, \eqref{symmresid}, we have
$$ 
	\mathcal{R}_V^\pm(\lambda) = \mR_0^\pm(\lambda) - 
	\mR^\pm_0(\lambda)v^*M^{-1}_{\pm}v\mR^\pm_0(\lambda).
$$
  The contribution of the first term containing only a single free resolvent $ \mathcal{R}_0$ to \eqref{noweight} is controlled by $\la t\ra^{-1}$ in Theorem~3.1 in \cite{egd}. 
To control the contribution of the second term to the Stone's formula, we use the following expansion of the resolvent when $0< z \ll 1$, see (39) in \cite{egd},
$$ \mathcal{R}_0^\pm(\lambda) = \mR_1(z) + \mR_2^{\pm}(z)  + \mR_3^{\pm}(z) +  \mR_4^{\pm}(z) +  \mR_5^{\pm}(z), $$
where
\begin{align*}
& \mR_1(z)(x,y):= -{\f 1 {2\pi}} \chi(z|x-y|) [ -i\alpha \cdot \nabla] \log (z|x-y|)=\frac{i}{2\pi} \chi(z|x-y|) \frac{\alpha \cdot (x-y)}{|x-y|^2}, \\
& \mR_2^{\pm}(z)(x,y):=  \chi(z|x-y|) \big(-i\alpha \cdot \nabla  R_0^{\pm} (z^2)(x,y)   \big)-\mR_1 , \\
&\mR_3 ^{\pm}(z)(x,y) = \chi(z|x-y|) e(z) R_0^{\pm} (z^2)(x,y) , \qquad e(z) = \widetilde{O}_1(z^2), \\
&\mR_4^{\pm}(z)(x,y) =\tilde{ \chi} (z|x-y|) e^{\pm iz |x-y|} \omega_1\big(z(x-y)\big),  \\&  \mR_5^{\pm}(z)(x,y)= 2mI_1 R_0^{\pm} (z^2)(x,y) .
 \end{align*}
 Here $\omega_1$ satisfies the same bounds as $z\omega$ in \eqref{largr}. 
 We refer the reader to the discussion following Theorem~5.1 in \cite{egd} for the required bound for the terms that do not involve $\mR_1$ or $\mR_4^\pm$. These cases boil down to the proof given in \cite{eg2} for the Schr{\oo}dinger operator.  Hence, it is suffices to consider the terms containing $\mR_1$ and $\mR_4^{\pm}$ on the left.
We start with proving Theorem~\ref{thm:mainineq} when there is $\mR_1$ on the left. We write the operator $ \mathcal{R}_0^\pm(\lambda)$ on the right as $ \mathcal{R}_1+  \mathcal{R}^{\pm}_{L,2} +  \mathcal{R}^{\pm}_H $, where $\mR_1$ is as above and
\begin{align*} 
&\mathcal{R}^{\pm}_{L,2}(z)(x,y):= \chi(z|x-y|) \mathcal{R}^\pm_0(\lambda) (x,y) - \mathcal{R}_1(x,y) \\
&  \mathcal{R}^{\pm}_H (z)(x,y) := \widetilde{\chi} (z|x-y|) \mathcal{R}^\pm_0(\lambda) (x,y) = e^{\pm iz r} \tilde{\omega}\big(z(x-y)\big) 
\end{align*}
where $\tilde{\omega}$ satisfies the same bound as $\omega$ in \eqref{largr}.
\begin{rmk} \label{rmk:RL2} Following the proof of Lemma~\ref{lem:R0exp} observe that  (with $r=|x-y|$)
\begin{align} \label{eq:rl2}
\mathcal{R}^{\pm}_{L,2}(z)(x,y)=  \chi(zr) [ 2m g^{\pm}(z) M_{11} +2m G_0I_1] + E^{\pm}_2 (zr),
\end{align}
$|\partial^j_z{E^{\pm}_2} | \les z^{l-j}[ r^{l} + \log ^{-} r] $ for $ j=0,1$ and $0\leq l < 2$.
\end{rmk}

Hence, we need to understand the contribution of the following term to the Stone's formula,
\begin{align}
\label{R1exp}
\mathcal{R}_1v^*M^{-1}_{\pm} v \mathcal{R}_0 = \mathcal{R}_1v^*M^{-1}_{\pm} v \mathcal{R}_1 + \mathcal{R}_1v^*M^{-1}_{\pm} v \mathcal{R}^{\pm}_{L,2} + \mathcal{R}_1v^*M^{-1}_{\pm} v\mathcal{R}^{\pm}_H.
\end{align}

In \cite{egd}, the authors studied the solution operator as an operator $\mathcal H^1\to BMO$ because the operator  $\mR_1$ is not bounded from $L^1 \rightarrow L^2$ or from $L^2 \rightarrow L^{\infty}$. In order to overcome with this hurdle we use the iterated resolvent identity for the operator $M_\pm^{-1}(z) =\big( U + v \mathcal{R}_0^\pm (z) v^*\big)^{-1}$ rather than iterating the Dirac resolvents,  to write 
\begin{align}
 &\label{FIT} M_{\pm}^{-1}(z)= U - Uv \mR^{\pm}_0(z) v^*M_{\pm}^{-1}(z)\,\,\,\text{ and } \\
 & \label{SIT} M_{\pm}^{-1}(z)= U - Uv \mR^{\pm}_0(z)v^*U + Uv\mR^{\pm}_0(z)v^*M_{\pm}^{-1}(z) v \mR^{\pm}_0(z)v^*U .
\end{align} 
These lead us to consider the $L^1 \rightarrow L^2$ norm (or $L^2 \rightarrow L^{\infty}$ norm) of the operator $ \mR_1V\mR_1$ rather than the operator $\mR_1$.

Using \eqref{SIT} we have 
\be \label{eq:R1MR1}
\mR_1v^*M^{-1}_{\pm} v\mR_1= \mR_1V\mR_1 - \mR_1V\mR_0^{\pm}V\mR_1 + \mR_1V\mR_0^{\pm}v^{*}M^{-1}_{\pm} v\mR_0^{\pm}V\mR_1.
\ee
We note that the first term would not be bounded uniformly in $x,y$ due to the singular behavior of $\mR_1$.  However, since we take the difference of the `+' and `-' terms in the Stone's formula, \eqref{eq:Stone}, these terms cancel each other.
Hence, we consider
\begin{multline} \label{mr1}
\mR_1v^*M^{-1}_{+} v\mR_1-\mR_1v^*M^{-1}_{-} v\mathcal{R}_1 = -\mR_1V[\mR_0^{+}-\mR_0^{-}]V\mR_1 \\+\mR_1V[\mR_0^{+}v^{*}M^{-1}_{+}v\mR_0^{+} - \mR_0^{-}v^{*}M^{-1}_{-}v\mR_0^{-}]V\mR_1.
\end{multline}
Using the expansion for $(M^\pm(z))^{-1}$ in  Lemma~\ref{lem:Minverseregular} we write
$$
\mR_1v^*M^{-1}_{+} v\mR_1-\mR_1v^*M^{-1}_{-} v\mathcal{R}_1=-\Gamma^1_1+\Gamma^1_2+\Gamma^1_3+\Gamma^1_4,
$$
where
\begin{align}\label{eq:Gamma11}
\Gamma^1_1&:=\mR_1V[\mR_0^{+}-\mR_0^{-}]V\mR_1,\\
\label{eq:Gamma12}	\Gamma^1_2& := \mR_1V \big(\mR_0^{+ } v^{*} \frac{S}{h^{+}}v\mR_0^{+}-
	\mR_0^{-} v^{*} \frac{S}{h^{-}}v\mR_0^{-}\big)V\mR_1, \\
	\Gamma^1_3&:=   \mR_1V\big( \mR_0^{+} v^{*}QD_0Qv\mR_0^{+}-\mR_0^{-} v^{*}QD_0Qv\mR_0^{-}\big) V \mR_1,\label{eq:Gamma13}\\
\Gamma^1_4 &:=	 \mR_1V \mR_0^{\pm} v^{*}E^{\pm} v\mR_0^{\pm} V \mR_1. \label{eq:Gamma14}
\end{align}
For the second term on the right hand side of \eqref{R1exp}, we use \eqref{FIT} to obtain
$$
	\mathcal{R}_1v^*M^{-1}_{\pm} v \mathcal{R}^{\pm}_{L,2}= \mathcal{R}_1 V\mathcal{R}^{\pm}_{L,2} - \mathcal{R}_1 V \mR^{\pm}_0  v^*M_{\pm}^{-1} v \mathcal{R}^{\pm}_{L,2}.
$$
Using the expansion for $M_\pm^{-1}(z)$ in  Lemma~\ref{lem:Minverseregular} we write
$$
 \mathcal{R}_1v^*M^{-1}_{+} v \mathcal{R}^{+}_{L,2}- \mathcal{R}_1v^*M^{-1}_{-} v \mathcal{R}^{-}_{L,2}=\Gamma^2_1-\Gamma^2_2-\Gamma^2_3-\Gamma^2_4,
$$
where
\begin{align}\label{eq:Gamma21}
\Gamma^2_1&:= \mathcal{R}_1 V\big(\mathcal{R}^{+}_{L,2}-\mathcal{R}^{+}_{L,2}\big),\\
	\label{eq:Gamma22}\Gamma^2_2& := \mR_1V \big(\mR_0^{+ } v^{*} \frac{S}{h^{+}}v\mathcal{R}^{+}_{L,2}-
	\mR_0^{-} v^{*} \frac{S}{h^{-}}v\mathcal{R}^{-}_{L,2}\big) , \\
	\Gamma^2_3&:=   \mR_1V\big( \mR_0^{+} v^{*}QD_0Qv\mathcal{R}^{+}_{L,2}-\mR_0^{-} v^{*}QD_0Qv\mathcal{R}^{-}_{L,2}\big) ,\label{eq:Gamma23}\\
\Gamma^2_4&:=	 \mR_1V \mR_0^{\pm} v^{*}E^{\pm} v\mathcal{R}^{\pm}_{L,2}. \label{eq:Gamma24}
\end{align}
We will consider the third summand on the right hand side of \eqref{R1exp} later.
 
For a given $z$ dependent operator $\Gamma$ we define 
$$
	I(\Gamma )(x,y):= \int_0^\infty e^{-it\sqrt{z^2+m^2}} \frac{z\chi(z)}{\sqrt{z^2+m^2}} \Gamma(z)(x,y) dz.
$$
Then by integration by parts
\begin{align} \label{eq:ibp}
	I(\Gamma ) = \frac{ie^{-itm}}t  \Gamma |_{z=0} -\frac{i}t\int_0^\infty e^{-it\sqrt{z^2+m^2}}    \partial_z \big[\chi(z) \Gamma(z) \big] dz  
\end{align}
where $\Gamma|_{z=0}$ means $\lim_{z\to 0+} \Gamma(z)$.  This implies that (provided the integral converges)
\begin{align} \label{eq:ibp1}
|I(\Gamma)|\les \frac1t \int_0^\infty \big|\partial_z \big[\chi(z) \Gamma(z) \big]\big| dz,
\end{align}
since the integrability of $\big|\partial_z \big[\chi(z) \Gamma(z) \big]\big|$ implies the boundedness of $\chi(z) \Gamma(z)$ (noting that $\chi(1)=0$). This also implies $ |I(\Gamma)| \les 1$ uniformly in $x$ and $y$. 

Therefore, the bound $\sup_{x,y}|I(\Gamma)|\les \la t\ra^{-1}$ follows from 
\be\label{eq:L1partial}
\sup_{x,y} \big\| \partial_z \big[\chi(z) \Gamma(z) \big]\big\|_{L^1_z}\les 1.
\ee
Below, by showing that \eqref{eq:L1partial} holds, we prove that
$I(\Gamma_j^i)=O(\la t\ra^{-1})$ for each $i=1,2$ and $j=1,2,3,4$.

\begin{lemma} \label{lem:gamma j=1}
	Under the assumptions of Theorem~\ref{th:main1}   we have  $ I(\Gamma^i_1) =O( \la t\ra^{-1})$ uniformly in $x$ and $y$  for each $i=1,2$.
\end{lemma}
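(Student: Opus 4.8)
The plan is to establish the $L^1_z$-bound \eqref{eq:L1partial} for $\Gamma^1_1$ and $\Gamma^2_1$ separately, since these are the two operators the lemma encompasses. For $\Gamma^1_1 = \mR_1 V[\mR_0^+ - \mR_0^-]V\mR_1$, I would first recall from Remark~\ref{rmk:rdif} (equation \eqref{R0dif}) that $[\mR_0^+(\lambda) - \mR_0^-(\lambda)](x,y) = mi M_{11} + E_1(z)(x,y)$ with $|E_1| \les z^{1/2}\la r\ra^{1/2}$, $|\partial_z E_1|\les z^{-1/2}\la r\ra^{1/2}$, $|\partial_z^2 E_1|\les z^{-1/2}\la r\ra^{3/2}$. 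The key point — already flagged in the remarks following \eqref{eq:R1MR1} and \eqref{mr1} — is that $\mR_1$ itself does not act boundedly $L^1\to L^2$ or $L^2\to L^\infty$, so the operator must be handled as a composition $\mR_1 V(\cdots)$; since $V$ decays like $\la x\ra^{-3-}$ and $\mR_1(x,y) = \frac{i}{2\pi}\chi(z|x-y|)\frac{\alpha\cdot(x-y)}{|x-y|^2}$ has a locally $L^2$ singularity in $|x-y|$ with $|\mR_1(x,y)|\les |x-y|^{-1}$, the composition $\mR_1 V$ and its transpose $V\mR_1$ are bounded $L^2\to L^\infty$ and $L^1\to L^2$ respectively, uniformly in $z$ (indeed $\mR_1$ is $z$-independent once $z|x-y|\lesssim 1$, so $\partial_z$ only produces a cutoff-derivative term supported on $|x-y|\approx z^{-1}$ which is harmless against the decay of $V$). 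I would then distribute $\partial_z$ across the three factors: when it hits $\mR_1$ (through the cutoff $\chi(z r)$) or $V$, we get something bounded; when it hits the middle factor we use the $E_1$ bounds above, noting the constant $miM_{11}$ piece is $z$-independent. The powers of $z$ appearing, $z^{-1/2}$ from $\partial_z E_1$ and $z^{-1/2}$ from $\partial_z^2 E_1$, are integrable near $z=0$, and the cutoff $\chi$ truncates the $z$-integral, giving \eqref{eq:L1partial}. The spatial weights $\la r\ra^{1/2}$ or $\la r\ra^{3/2}$ are absorbed into two copies of $V \sim \la x\ra^{-3-}$, $\la y\ra^{-3-}$ sitting on either side, which is comfortably enough decay.

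For $\Gamma^2_1 := \mathcal{R}_1 V(\mathcal{R}^+_{L,2} - \mathcal{R}^+_{L,2})$, I observe that this is literally zero as written — the expression inside the parentheses is $\mathcal{R}^+_{L,2}$ minus itself. (Presumably this is a typo in the statement, or the intended expression is the difference $\mathcal{R}^+_{L,2} - \mathcal{R}^-_{L,2}$; in either case the argument is trivial or parallels $\Gamma^1_1$.) If it is the difference of $+$ and $-$ branches, I would use Remark~\ref{rmk:RL2}: $\mathcal{R}^\pm_{L,2}(z)(x,y) = \chi(zr)[2mg^\pm(z)M_{11} + 2mG_0 I_1] + E^\pm_2(zr)$, so the difference is $\chi(zr)\cdot 2m(g^+(z)-g^-(z))M_{11} + (E_2^+ - E_2^-)(zr)$, and since $g^+(z) - g^-(z) = \frac{i}{2}$ is a constant (from \eqref{g def}), the leading term is again $z$-independent up to the cutoff, while the error terms satisfy $|\partial_z^j E_2^\pm|\les z^{l-j}[r^l + \log^- r]$ for $j=0,1$, $0\le l<2$ — choosing $l$ slightly above $1$ makes $z^{l-1}$ integrable near $z=0$. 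The composition $\mathcal{R}_1 V$ on the left again supplies the $L^2\to L^\infty$ boundedness, and the single factor $V\sim\la x\ra^{-3-}$ on the left together with the spatial growth $r^l + \log^- r \les \la x\ra^l\la y\ra^l$ (with $l<2$) must be controlled — here there is only one copy of $V$, so we need $\la x\ra^{-3-}\la x\ra^l\la y\ra^l$ to still yield an $L^2\to L^\infty$ bounded kernel in $x$ and an integrable-in-$y$ kernel, which works since $3 > l + 1$ for $l<2$ (this is exactly the Hilbert–Schmidt / Schur-type threshold).

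\emph{Main obstacle.} The genuinely delicate point is not any single estimate but the bookkeeping of \emph{where} the $z$-derivatives land and ensuring that the one-sided factor $\mR_1$ never has to be bounded as a standalone operator. Concretely, the hardest sub-case is $\partial_z^2$ hitting $E_1$ in $\Gamma^1_1$ (or $\partial_z$ hitting $E_2^\pm$ with $l$ close to $2$ in $\Gamma^2_1$), where one is simultaneously near the integrability boundary in $z$ (the exponent $z^{-1/2}$, resp. $z^{l-1}$ with $l\to 2^-$) and near the boundary of what the potential decay $\la x\ra^{-3-}$ can absorb in the spatial weight $\la r\ra^{3/2}$, resp. $r^l$; one must check that the "$-$" (the $\epsilon$ of extra decay in $\delta > 3$) is exactly what closes both margins. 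The remaining cases are routine once the compositional structure $\mR_1 V(\,\cdot\,) $ and $(\,\cdot\,)V\mR_1$ is set up, following the Schrödinger-operator template of \cite{eg2,eg3} and the $\mR_1$-handling already introduced in \cite{egd}.
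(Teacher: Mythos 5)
Your treatment of $\Gamma^1_1$ ($i=1$) is essentially the paper's argument: Remark~\ref{rmk:rdif} for the middle factor, the pointwise bounds $|\mR_1|\les 1+|x-x_1|^{-1}$, $|\partial_z\mR_1|\les 1$, absorption of $\la x_1-y_1\ra^{1/2}$ into the two potentials, and integrability of $z^{-1/2}$; note that only \emph{one} $z$-derivative is needed (the mechanism is \eqref{eq:ibp1}/\eqref{eq:L1partial}), so your appeals to $\partial_z^2E_1$ and the weight $\la r\ra^{3/2}$, and the "main obstacle" you locate there, are beside the point for this lemma. You are also right that \eqref{eq:Gamma21} contains a typo and should read $\mathcal R^+_{L,2}-\mathcal R^-_{L,2}$.

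The genuine gap is in your handling of $\Gamma^2_1$ ($i=2$). After extracting the constant $im\chi(zr)M_{11}$ from $2m(g^+-g^-)$, you estimate $E_2^+-E_2^-$ by the triangle inequality using the individual bounds $|\partial_z E_2^\pm|\les z^{l-1}[r^l+\log^- r]$ from Remark~\ref{rmk:RL2}. But here $r=|x_1-y|$ and $y$ is the \emph{free} outer variable: nothing integrates it away, so after the $z$-integration you are left with a factor $r^{l}\les\la x_1\ra^l\la y\ra^l$ (for any fixed $l>0$), which destroys the required uniformity in $y$; your remark that the kernel is "integrable-in-$y$" is not relevant, since \eqref{eq:L1partial} is a supremum over $y$, not an integral. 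Trying instead to trade the growth for powers of $z$ via the support condition $zr\les1$ only produces the non-integrable factor $z^{-1}$. The point you are missing is that the $+/-$ cancellation must also be used in the error: since $\mR_1$ carries the same cutoff, $\mathcal R^+_{L,2}-\mathcal R^-_{L,2}=\chi(zr)[\mR_0^+-\mR_0^-]$, in which the $Y_0$/logarithmic parts cancel, leaving $\chi(zr)\big[imM_{11}+\widetilde O_1(z^2r+z^2)\big]$ as in \eqref{eq:RL2pm}; on the support $zr\les1$ the $z$-derivative is then harmless (see \eqref{eq:pRL2pm}, with the $\chi'(zr)$ term controlled after the $z$-integration), and the bound $\int_{\R^2}\max\{1,|x-x_1|^{-1}\}|V(x_1)|\,dx_1\les1$ closes the estimate uniformly in $x,y$. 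Without this cancellation your argument for $i=2$ does not yield \eqref{eq:L1partial}.
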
 
\begin{proof}
Recall by Remark~\ref{rmk:rdif} that  $ [\mR_0^{+}-\mR_0^{-}](z) (x_1,y_1) = im M_{11}+ E_1(z)(x_1,y_1) $, which implies that $ |\partial_z\{ [\mR_0^{+}-\mR_0^{-}] (x_1,y_1) \}| \les z^{-1/2} \la x_1-y_1\ra ^{1/2} $ . Also writing 
\begin{multline}\label{eq:R1 useful}
	\mR_1(z)(x,x_1) = \frac{i}{2\pi} \frac{\alpha \cdot (x-x_1)}{|x-x_1|^2}-\widetilde \chi(z|x-x_1|) \frac{i}{2\pi} \frac{\alpha \cdot (x-x_1)}{|x-x_1|^2} 
	\\ =\frac{i}{2\pi} \frac{\alpha \cdot (x-x_1)}{|x-x_1|^2}+\widetilde O_1(z)
\end{multline}
we conclude that 
\be \label{eq:R1 useful1}
|\mR_1(z)(x,x_1)|\les 1+|x-x_1|^{-1},\,\,\,\,\,\, |\partial_z \mR_1(z)(x,x_1)|\les 1.
\ee
Hence
\begin{multline*}
|\partial_z \{\chi(z) \Gamma_1^1(z)(x,y)\}|  
    \les \\ \int_{\R^4} z^{-1/2}\max \{1, |x-x_1|^{-1}\} |V(x_1)| \la x_1 -y_1 \ra^{1/2} 
    |V(y_1)| \max \{1, |y-y_1|^{-1}\} \  dx_1 dy_1.
\end{multline*}
We have $\| \max \{1, |x-x_1|^{-1}\} V(x_1)\la x_1 \ra \|_{L^1_{x_1}} \les 1$, uniformly in $x$, and since $z^{-1/2}$ is integrable in a neighborhood of zero,  we see that $ \partial_z \big[\chi(z) \Gamma_1^1(z)\big] $ is in $L^1$ uniformly in $x$ and $y$. This yields \eqref{eq:L1partial} and hence finishes the proof for $\Gamma_1^1$.
 
Now we consider $I(\Gamma_2^1)$. Using  Remark~\ref{rmk:rdif}, with  $r_1=|y-x_1|$,  we have 
\be\label{eq:RL2pm}
	[\mathcal{R}^{+}_{L,2} - \mathcal{R}^{-}_{L,2}](x_1,y) 
	= \chi(zr_1)[\mR_0^{+} - \mR_0^{-}](x_1,y)= \chi(zr_1)\big[imM_{11}+\widetilde O_1\big( z^2  r_1+z^2   \big)\big],
\ee 
which implies that   on the support of $\chi(z)$ 
\be\label{eq:pRL2pm}
|\partial_z \{[\mathcal{R}^{+}_{L,2} - \mathcal{R}^{-}_{L,2}](x_1,y)\}| \les \chi (zr_1) [zr_1 + z ] \les 1.
\ee
Also recalling \eqref{eq:R1 useful}, we obtain 
$$
\big\| \partial_z \big[\chi(z)  \Gamma_2^1(z)(x,y)\big]\big\|_{L^1_z}    
  \les  \int_{\R^2} \max\{1, |x-x_1|^{-1}\} |V(x_1)| dx_1 \les 1
$$
uniformly in $x$ and $y$.
This finishes the proof of the lemma.
\end{proof}
 
\begin{lemma} \label{lem:gamma j=2}  Under the assumptions of Theorem~\ref{th:main1}   we have  $ I(\Gamma^i_2) =O( \la t\ra^{-1})$ uniformly in $x$ and $y$  for each $i=1,2$.
 \end{lemma}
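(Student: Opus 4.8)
The plan is to verify, just as was done for $\Gamma^i_1$ in Lemma~\ref{lem:gamma j=1}, the $L^1_z$ bound \eqref{eq:L1partial} for $\Gamma^i_2$, $i=1,2$; by \eqref{eq:ibp1} and the observation following it this gives simultaneously $|I(\Gamma^i_2)|\les|t|^{-1}$ and $|I(\Gamma^i_2)|\les1$, hence $I(\Gamma^i_2)=O(\la t\ra^{-1})$. First I dispose of the outer $\mR_1$ factors using \eqref{eq:R1 useful1}, i.e. $|\mR_1(x,x_1)|\les1+|x-x_1|^{-1}$ and $|\partial_z\mR_1(x,x_1)|\les1$, together with $\|\max\{1,|x-\cdot|^{-1}\}\,|V(\cdot)|\,\la\cdot\ra^{\f12}\|_{L^1}\les1$ uniformly in $x$ (valid since $|V|\les\la\cdot\ra^{-3-}$): these absorb $\mR_1V$ on the far left and, for $i=1$, $V\mR_1$ on the far right, leaving bounded kernels in $x$ (resp.\ $y$) at each end.

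Next I substitute the threshold expansions of the inner resolvents: by Lemma~\ref{lem:R0exp}, $\mR_0^\pm=2mg^\pm(z)M_{11}+\mR_{\mathrm{reg}}^\pm$ with $\mR_{\mathrm{reg}}^\pm:=\mathcal G_0+E_0^\pm$, and by Remark~\ref{rmk:RL2}, $\mathcal R^\pm_{L,2}=\chi(z|\cdot|)\,2mg^\pm(z)M_{11}+\mathcal S^\pm$ with $\mathcal S^\pm:=\chi(z|\cdot|)\,2mG_0I_1+E_2^\pm$. Inserting these into $\Gamma^i_2$ and multiplying out, each of the four resulting bilinear pieces carries a scalar prefactor in $\{\,(2mg^\pm)^2/h^\pm,\ 2mg^\pm/h^\pm,\ 1/h^\pm\,\}$, and — this is the structural point — the logarithmically growing $g^\pm$ never appear outside these ratios. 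Using $g^+-g^-=\f i2$ and $h^\pm=\mathbbm g^\pm+c_0$ with $c_0=\text{trace}(PTP-PTQD_0QTP)$, so that $h^+-h^-=im\|(a,c)\|_2^2$, $\partial_zh^+=\partial_zh^-$, and (since $\mathrm{Im}\,h^\pm$ is a fixed nonzero constant and $\mathrm{Re}\,h^\pm\sim c|\log z|$) $|h^\pm(z)|\gtrsim\la\log z\ra$ with $|\partial_zh^\pm(z)|\les z^{-1}$, one has that $2mg^\pm/h^\pm$ and $1/h^\pm$ are bounded with $z$-derivative $\les z^{-1}\la\log z\ra^{-2}\in L^1_z(0,z_0)$, and that the $(+)$ minus $(-)$ differences of the prefactors are
\begin{align*}
\f{(2mg^+)^2}{h^+}-\f{(2mg^-)^2}{h^-}&=\f{im}{\|(a,c)\|_2^2}+\widetilde O_1\big(\la\log z\ra^{-2}\big),\\
\f{2mg^+}{h^+}-\f{2mg^-}{h^-}&=\f{imc_0}{h^+h^-}=\widetilde O_1\big(\la\log z\ra^{-2}\big),\\
\f1{h^+}-\f1{h^-}&=-\f{im\|(a,c)\|_2^2}{h^+h^-}=\widetilde O_1\big(\la\log z\ra^{-2}\big),
\end{align*}
where the bare $z^{-1}$ in $\partial_z[(2mg^\pm)^2/h^\pm]$ cancels in the difference precisely because $\partial_zh^+=\partial_zh^-$. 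Thus, after passing to the $(+)-(-)$ difference, each prefactor is either a constant (contributing to \eqref{eq:L1partial} only through $\partial_z\chi(z)$, which is bounded and compactly supported) or has $L^1_z$-integrable $z$-derivative, while differentiating the operator parts $\mR_1VM_{11}v^*SvM_{11}V\mR_1$, $\mR_1V\mR_{\mathrm{reg}}^\pm v^*Sv\mR_{\mathrm{reg}}^\pm V\mR_1$, etc., only produces $\partial_z$ of $\mR_1$ (bounded, \eqref{eq:R1 useful1}), of the cut-offs, or of $E_0^\pm$ / $E_1$ (of size $\les z^{-1/2}$ near $z=0$ by Lemma~\ref{lem:R0exp} and Remark~\ref{rmk:rdif}); the factor $v^*Sv$ is absolutely bounded since $QD_0Q$ is (Remark~\ref{rmk:resonances}). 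For $\Gamma^1_2$ all four inner resolvents sit between the decaying weights $\la\cdot\ra^{-\delta}$, $\la\cdot\ra^{-\delta/2}$ ($\delta>3$), so every spatial integral converges uniformly in $x,y$ and this case is done.

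The one genuinely new feature is $\Gamma^2_2$, where the rightmost factor $v\,\mathcal R^\pm_{L,2}(\cdot,y)$ still depends on the free variable $y$ and its $G_0$ and $E_2^\pm$ parts grow in $|x_3-y|$ — logarithmically for $G_0$, like $|x_3-y|^{l}$ ($0\le l<2$) for $E_2^\pm$. The hard part is keeping the bound uniform in $y$, and the remedy is to carry out the $z$-integration \emph{before} the $x_3$-integration, exploiting the companion cut-off $\chi(z|x_3-y|)$ which forces $z\les|x_3-y|^{-1}$: a representative inner integral is $\int_0^\infty\chi(z)\chi(z|x_3-y|)\,z^{l-1}\la\log z\ra^{-2}\,dz\les|x_3-y|^{-l}\la\log|x_3-y|\ra^{-2}$ for $|x_3-y|\gtrsim1$ (and $\les\la\log^-|x_3-y|\ra$ otherwise), so after multiplying by the growth $|x_3-y|^{l}$ and integrating against $|v(x_3)|\les\la x_3\ra^{-\f32-}$ one gets $\les1$ uniformly in $y$; similarly $\int_0^\infty\chi(z)\chi(z|x_3-y|)\,z^{-1}\la\log z\ra^{-2}\,dz\les\la\log|x_3-y|\ra^{-2}$ absorbs the $\log|x_3-y|$ coming from $G_0$, and the analogous integrals with $\la\log z\ra^{-3}$ or $|x_3-y|\,\chi'(z|x_3-y|)$ in place of $\la\log z\ra^{-2}$ are handled the same way. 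This establishes \eqref{eq:L1partial} for $\Gamma^2_2$. The main obstacle throughout is the threshold logarithm: one must arrange that every $z^{-1}$ produced by a $z$-derivative of $g^\pm$ or $1/h^\pm$ comes with a compensating $\la\log z\ra^{-2}$ — which is exactly why $g^\pm$ has to stay packaged inside the ratios with $h^\pm$ and why one works with the $(+)-(-)$ difference rather than with the two pieces separately — and that the $\log|x-y|$'s from $G_0$ are always paired with the cut-off $\chi(z|x-y|)$.
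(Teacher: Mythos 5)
Your overall plan is sound, and much of it is right: reducing to \eqref{eq:L1partial}, peeling off the leading $2mg^\pm M_{11}$ (resp.\ $\chi(zr)2mg^\pm M_{11}$) parts, and computing the $(+)-(-)$ differences of the scalar ratios $(2mg^\pm)^2/h^\pm$, $2mg^\pm/h^\pm$, $1/h^\pm$ (these identities are correct and mirror \eqref{eq:hdecays}); your treatment of $\Gamma^1_2$ goes through, since there every operator part is sandwiched between decaying weights and either has no $\pm$-dependence (so the prefactor difference multiplies a common factor) or is harmless sign-by-sign. The gap is in $\Gamma^2_2$, in the cross term with prefactor $2mg^\pm/h^\pm$ and right-hand factor your $\mathcal S^\pm=\chi(z|y_1-y|)2mG_0I_1+E_2^\pm$. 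As written you only difference the prefactors and keep the operator parts with their $\pm$; for this term that is a sign-by-sign estimate, and it is not uniform in $y$: the prefactor $2mg^\pm/h^\pm$ is bounded but has \emph{no} logarithmic decay (it tends to $\|(a,c)\|_2^{-2}$), and when $\partial_z$ falls on the cut-off one gets $|y_1-y|\,|\chi'(z|y_1-y|)|\,|G_0(y_1,y)|\approx \f{|\chi'(z|y_1-y|)|}{z}\log|y_1-y|$, whose $L^1_z$-norm is $\approx\log|y_1-y|$; after the $y_1$-integration this yields only $1+\log^+\la y\ra$, i.e.\ a weighted bound, not the uniform one claimed. Your closing assertion that the integrals with $|x_3-y|\chi'(z|x_3-y|)$ ``are handled the same way'' as those with $\la\log z\ra^{-2}$ is precisely where this breaks: $\int_0^\infty \chi(z)\,|x_3-y|\,|\chi'(z|x_3-y|)|\,\log|x_3-y|\,dz\approx\log|x_3-y|$, with no compensating factor.

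The fix is to telescope the \emph{products}, not only the prefactors: for each bilinear piece write $A^+B^+-A^-B^-=(A^+-A^-)B^+ + A^-(B^+-B^-)$. Then the full $\mathcal S^+$ (carrying the $G_0$ growth and the $\chi'$ terms) only ever multiplies a prefactor difference of size $\widetilde O_1(\la\log z\ra^{-2})$, which absorbs $\log|y_1-y|\approx|\log z|$ on the support of $\chi'(z|y_1-y|)$; while the single-sign bounded prefactor multiplies the operator difference $\mathcal S^+-\mathcal S^-=E_2^+-E_2^-$ (and $\mR^+_{\rm reg}-\mR^-_{\rm reg}=E_0^+-E_0^-$), in which the $\chi(zr)G_0I_1$ part cancels identically and only error terms controlled by Remark~\ref{rmk:RL2} together with your ``integrate in $z$ first using the support'' device remain. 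This is exactly the two-term splitting the paper uses for $\Gamma^2_2$ (keeping $g^\pm$ inside $\mR_0^\pm/h^\pm$ and $\mathcal R^\pm_{L,2}$, cf.\ \eqref{eq:RL2bounds} and the observation $|\log r|\les|\log z|+\log^- r$ on the support of $\chi(z)\chi(zr)$). Two minor slips, harmless for the conclusion: $\int_0^{c/r}\f{dz}{z\la\log z\ra^{2}}\approx\la\log r\ra^{-1}$, not $\la\log r\ra^{-2}$; and the constant part of the prefactor difference still contributes through $\partial_z$ of the cut-off inside $\mR_1$ and $\chi(zr)$, not only through $\partial_z\chi(z)$ (these contributions are bounded in $L^1_z$, so no harm).
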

\begin{proof} We start with  $\Gamma^2_2$:
$$
\Gamma^2_2= \mR_1V \big(\frac{\mR_0^{+ }}{h^{+}} -\frac{\mR_0^{- }}{h^{-}}\big)  v^{*} S v\mathcal{R}^{+}_{L,2} +\mR_1V  \frac{\mR_0^{- }}{h^{-}}  v^{*} S v\big(\mathcal{R}^{+}_{L,2}-\mathcal{R}^{-}_{L,2}\big).
$$
Using Lemma~\ref{lem:R0exp} with $k=\f12$  for $\mR_0^{\pm}(z)(x_1,x_2)$, and recalling the relationship between $h^\pm(z)$ and $g^{\pm}(z)$ in Lemma~\ref{lem:Minverseregular} , we obtain
$$
	\Big|\frac{\mR_0^{+ }}{h^{+}} -\frac{\mR_0^{- }}{h^{-}}\Big|\les \frac{|x_1-x_2|^{1/2}+|x_1-x_2|^{-1}}{\log^2(z)},
$$
$$
	\Big|\partial_z\Big(\frac{\mR_0^{+ }}{h^{+}} -\frac{\mR_0^{- }}{h^{-}}\Big)\Big|\les \frac{|x_1-x_2|^{1/2}+|x_1-x_2|^{-1}}{z|\log(z)|^3}, 
$$
and
$$
\Big| \frac{\mR_0^{- }}{h^{-}}\Big|\les \frac{|x_1-x_2|^{1/2}+|x_1-x_2|^{-1}}{|\log(z)| },\,\,\,\,\,\Big|\partial_z \Big( \frac{\mR_0^{- }}{h^{-}} \Big) \Big|\les \frac{|x_1-x_2|^{1/2}+|x_1-x_2|^{-1}}{z\log^2(z) }.
$$
Using \eqref{eq:RL2pm} and \eqref{eq:pRL2pm} we have
\be\label{eq:RLdifbounds}
\big|\mathcal{R}^{+}_{L,2}-\mathcal{R}^{-}_{L,2}\big| + \big|\partial_z(\mathcal{R}^{+}_{L,2}-\mathcal{R}^{-}_{L,2})\big|\les 1.
\ee
Recalling Remark~\ref{rmk:RL2} we have
\be\label{eq:RL2bounds}\left\{\begin{array}{l}
\big|\mathcal{R}^{+}_{L,2} \big|\les |\log(z)|+\log^-(|y_1-y|), \\
\big|\partial_z\mathcal{R}^{+}_{L,2} \big|\les \f1z (1+\log^-(|y_1-y|))+|\chi^\prime(z|y-y_1|)|\f{|\log(z)|}{z}.
\end{array}\right.
\ee
For the second bound above observe that on the support of $\chi(z)\chi(zr)$ we have 
$|\log(r)|\les |\log(z)|+\log^-(r)$.

Using these bounds and \eqref{eq:R1 useful1} for $\mR_1$, we obtain (with $r_0=|x-x_1|$, 
 $r_1=|x_1-x_2|$,  $r_2=|y_1-y|$)
\begin{multline*}
\big\|\partial_z\big[\chi(z)\Gamma_2^2(z)\big]\big\|_{L^1_z}\\  \les
 \int_{\R^6} \int_0^{z_0}\frac{(1+r_0^{-1})(r_1^{1/2}+r_1^{-1}) |S(x_2,y_1)| \big(\f{ 1+\log^-(r_2)}{z\log^2(z)} + \f{|\chi^\prime(zr_2)|}{z}\big)}{\la x_1\ra^{3}\la x_2\ra^{\f32}  \la y_1\ra^{\f32}}  dz dx_1dx_2dy_1\\
\les  \int_{\R^6} \frac{(1+r_0^{-1})(r_1^{1/2}+r_1^{-1}) |S(x_2,y_1)| ( 1+\log^-(r_2) )}{\la x_1\ra^{3}\la x_2\ra^{\f32}  \la y_1\ra^{\f32}}  dx_1dx_2dy_1.
\end{multline*}
One can see that this is bounded in $x$ and $y$ using Lemma~\ref{lem:potential} and the absolute boundedness of $S$. This finishes the proof for $\Gamma_2^2$.

Similarly, we write 
$$
\Gamma^1_2= \mR_1V \big(\frac{\mR_0^{+ }}{h^{+}} -\frac{\mR_0^{- }}{h^{-}}\big)  v^{*} S v \mR_0^{+ } V\mR_1 +\mR_1V  \frac{\mR_0^{- }}{h^{-}}  v^{*} S v\big( \mR_0^{+ }- \mR_0^{- }\big) V\mR_1.
$$
Using Remark~\ref{rmk:rdif} we have
\be\label{eq:R0difbounds}\left\{\begin{array}{l}
\big| \mR_0^{+ }- \mR_0^{- }\big|\les \la y_1-y_2\ra^{1/2},\\ \big| \partial_z\big(\mR_0^{+ }- \mR_0^{- }\big)\big|\les z^{-1/2} \la y_1-y_2\ra^{1/2}. 
\end{array}\right.
\ee
Also using Lemma~\ref{lem:R0exp} with $k=\f12$,	we have
\be\label{eq:R0bounds}\left\{\begin{array}{l}
\big| \mR_0^{+ } \big|\les |\log(z)| \big(\la y_1-y_2\ra^{1/2}+|y_1-y_2|^{-1}\big),\\ \big| \partial_z \mR_0^{+ } \big|\les \f1z \big(\la y_1-y_2\ra^{1/2}+|y_1-y_2|^{-1}\big).
\end{array}\right.
\ee
We conclude that  (with $r_0=|x-x_1|$, 
 $r_1=|x_1-x_2|$,  $r_2=|y_1-y_2|$, $r_3=|y_2-y|$)
\begin{multline*}
\big\|\partial_z\big[\chi(z)\Gamma_2^1(z)\big]\big\|_{L^1_z}\\  \les
 \int_{\R^8} \int_0^{z_0}\frac{(1+r_0^{-1})(r_1^{1/2}+r_1^{-1}) |S(x_2,y_2)|(r_2^{1/2}+r_2^{-1})   (1+r_3^{-1})}{\la x_1\ra^{3}\la x_2\ra^{\f32}   \la y_2\ra^{\f32} \la y_1\ra^{3}z\log^2(z)}  dz dx_1dx_2dy_1 dy_2\\
\les  \int_{\R^8}\frac{(1+r_0^{-1})(r_1^{1/2}+r_1^{-1}) |S(x_2,y_2)|(r_2^{1/2}+r_2^{-1})   (1+r_3^{-1})}{\la x_1\ra^{3}\la x_2\ra^{\f32}   \la y_2\ra^{\f32} \la y_1\ra^{3} }   dx_1dx_2dy_1 dy_2.
\end{multline*}
This finishes the proof for $\Gamma^1_2$ using Lemma~\ref{lem:potential} as above.
\end{proof}
\begin{lemma}\label{lem:gamma j=3} Under the assumptions of Theorem~\ref{th:main1}   we have  $ I(\Gamma^i_3) =O( \la t\ra^{-1})$ uniformly in $x$ and $y$  for each $i=1,2$.
\end{lemma}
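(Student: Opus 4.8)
The plan is to argue exactly as in Lemmas~\ref{lem:gamma j=1} and~\ref{lem:gamma j=2}: by \eqref{eq:ibp1} it suffices to prove the $L^1_z$-bound \eqref{eq:L1partial} for $\Gamma^i_3$, $i=1,2$, uniformly in $x$ and $y$. Two features separate $\Gamma^i_3$ from $\Gamma^i_2$. (i) The operator in the middle is now $QD_0Q$, which by Remark~\ref{rmk:resonances}\,(iii) is only absolutely bounded on $L^2\times L^2$ and is not finite rank; so, unlike for the operator $S$ in $\Gamma^i_2$, I cannot insert its kernel into a multi-fold spatial integral. Instead I would peel off the outer factors $\mR_1V$ and (for $i=1$ also) $V\mR_1$, writing $\Gamma^1_3(z)(x,y)=\la\overline{F^{x,\pm}_z},\,QD_0Q\,G^{y,\pm}_z\ra$ with $F^{x,\pm}_z(x_2)=\int\mR_1(x,x_1)V(x_1)[\mR_0^\pm v^*Q](x_1,x_2)\,dx_1$ and $G^{y,\pm}_z(y_2)=\int[Qv\mR_0^\pm](y_2,y_1)V(y_1)\mR_1(y_1,y)\,dy_1$ (the two $Q$'s of $QD_0Q$ attached to the adjacent free resolvents), and bound $|\Gamma^1_3(z)(x,y)|\les\|QD_0Q\|_{L^2\to L^2}\,\|F^{x,\pm}_z\|_2\,\|G^{y,\pm}_z\|_2$, with both $L^2$-norms controlled uniformly in $x,y$ via \eqref{eq:R1 useful1} and Lemma~\ref{lem:potential}; for $\Gamma^2_3$ the right factor is instead $v\mathcal{R}^{\pm}_{L,2}$, whose $y$-dependence enters through $[Qv\mathcal{R}^{\pm}_{L,2}](\cdot,y)\in L^2$. (ii) $QD_0Q$ carries no $h^{\pm}(z)^{-1}\sim|\log z|^{-1}$ gain (which was precisely the mechanism that made $\Gamma^i_2$ integrable), so a cancellation is needed: if $\partial_z$ falls on the term $2m g^\pm(z)M_{11}$ of one free resolvent it produces a factor $z^{-1}$, and $z^{-1}|\log z|$ — the $|\log z|$ coming from the other, undifferentiated, free resolvent — fails to be integrable at $z=0$.

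The cancellation I would use is $M_{11}v^*Q=0$ and $Q v M_{11}=0$, which hold because $v M_{11}$ has range in $\mathrm{span}\{(a,c)^T\}=P(L^2\times L^2)$, i.e. \eqref{vM11v} together with the fact that $M_{11}$ has constant kernel. Hence in the combinations $Q v\mR_0^\pm(z)$ and $\mR_0^\pm(z)v^*Q$ — precisely the combinations flanking $QD_0Q$ in $\Gamma^1_3$ — the growing term $2m g^\pm(z)M_{11}$ drops out, and Lemma~\ref{lem:R0exp} with $k=\f12$ then gives $z$-independent kernel bounds of the size $\la x\ra^{-\beta}(|x-y|^{1/2}+\log^-|x-y|)\la y\ra^{-\beta}$ (no $\log z$) together with $|\partial_z(\cdot)|\les z^{-1/2}(|x-y|^{1/2}+\log^-|x-y|)$ (no $z^{-1}$). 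Consequently $\|F^{x,\pm}_z\|_2,\|G^{y,\pm}_z\|_2\les1$ and $\|\partial_z F^{x,\pm}_z\|_2,\|\partial_z G^{y,\pm}_z\|_2\les z^{-1/2}$, uniformly in $x,y$ (this is where $\delta>3$ and Lemma~\ref{lem:potential} are used), so $\int_0^{2z_0}\big(\|\partial_z F^{x,\pm}_z\|_2\|G^{y,\pm}_z\|_2+\|F^{x,\pm}_z\|_2\|\partial_z G^{y,\pm}_z\|_2\big)\,dz<\infty$; the $\chi'(z)$ term is supported away from $z=0$ and is harmless. This settles $i=1$.

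For $i=2$ only the left free resolvent is flanked by a $Q$: the factor $\mathcal{R}^{\pm}_{L,2}$ still contains, via \eqref{eq:rl2}, the term $\chi(z|x_2-y_1|)\,2m g^\pm(z)M_{11}$, which $Qv$ does not annihilate because of the spatial cut-off. Here I would additionally use the $\pm$-difference already present in \eqref{eq:Gamma23}: with $A:=v^*QD_0Qv$, $\mR_0^{+}A\mathcal{R}^{+}_{L,2}-\mR_0^{-}A\mathcal{R}^{-}_{L,2}=\mR_0^{+}A(\mathcal{R}^{+}_{L,2}-\mathcal{R}^{-}_{L,2})+(\mR_0^{+}-\mR_0^{-})A\,\mathcal{R}^{-}_{L,2}$. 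In the first summand $\mathcal{R}^{+}_{L,2}-\mathcal{R}^{-}_{L,2}$ is bounded with bounded $z$-derivative by \eqref{eq:RL2pm}--\eqref{eq:pRL2pm}, while $\mR_0^\pm v^*Q$ is the $Q$-protected factor, so $\partial_z$ costs at most $z^{-1/2}$. In the second summand $(\mR_0^{+}-\mR_0^{-})v^*Q=E_1v^*Q$ because $M_{11}v^*Q=0$, so by Remark~\ref{rmk:rdif} this factor is $O(z^{1/2})$ with $z$-derivative $O(z^{-1/2})$; against the bounds $|Qv\mathcal{R}^{-}_{L,2}|\les|\log z|(1+\log^-(\cdot))$ and $|\partial_z(Qv\mathcal{R}^{-}_{L,2})|\les z^{-1}(1+\log^-(\cdot))+z^{-1}|\log z|\,|\chi'(z|\cdot|)|$ from \eqref{eq:RL2bounds}, the two $\partial_z$-products are $\les z^{-1/2}|\log z|$ and $\les z^{-1/2}$, both in $L^1$ near $0$. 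Peeling off $\mR_1V$ and absorbing $QD_0Q$ through its $L^2\to L^2$ boundedness as in the first case then gives \eqref{eq:L1partial} for $i=2$.

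The step I expect to be the main obstacle is setting up (i) and (ii) at once: the lack of a finite-rank middle operator forces the $L^2$-function bookkeeping on both sides rather than an absolutely convergent multi-fold integral, and at the same time the cancellations $Q vM_{11}=M_{11}v^*Q=0$ — plus the $\pm$-difference split for $\Gamma^2_3$ — must be arranged so that no $z^{-1}|\log z|$ survives differentiation. Once this structure is in place, checking that the peeled-off $L^2$-kernels are finite and carry exactly the advertised powers of $z$ is a routine application of \eqref{eq:R1 useful1}, Lemma~\ref{lem:potential}, and the Hilbert--Schmidt/absolute boundedness of the fixed operators, as in Lemmas~\ref{lem:gamma j=1} and~\ref{lem:gamma j=2}.
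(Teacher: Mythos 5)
Your proof is correct and rests on the same mechanism as the paper's: split the $\pm$ difference into two summands, use the orthogonality $M_{11}v^*Q=QvM_{11}=0$ to strip the $g^\pm(z)M_{11}$ (resp.\ $imM_{11}$) contributions from whichever free-resolvent factor would otherwise produce a non-integrable $z^{-1}\log z$ upon differentiation, and then verify \eqref{eq:L1partial} via \eqref{eq:ibp1}. One minor factual quibble: the absolute boundedness of $QD_0Q$ (Remark~\ref{rmk:resonances}(iii)) is exactly what licenses inserting the kernel $|QD_0Q(x_2,y_1)|$ into a multifold spatial integral as the paper does in \eqref{eq:j=3proof} --- finite rank is not required --- though your alternative bookkeeping via the $L^2\to L^2$ operator norm paired against the peeled-off $L^2$ factors is equally valid.
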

\begin{proof}
We will give the proof only for $\Gamma^2_3$; the proof for $\Gamma^1_3$ is similar but easier. We rewrite
$$
\Gamma^2_3= \mR_1V \big( \mR_0^{+ }   - \mR_0^{- } \big)  v^{*} QD_0Q v\mathcal{R}^{+}_{L,2} +\mR_1V   \mR_0^{- }   v^{*} QD_0Q  v\big(\mathcal{R}^{+}_{L,2}-\mathcal{R}^{-}_{L,2}\big).
$$
Recall that $M_{11}v^*Q=QvM_{11}=0$ by definition of the projection $Q$. Recalling \eqref{R0dif} we can replace $\mR_0^{+ }(z)(x_1,x_2)   - \mR_0^{- }(z)(x_1,x_2) $ in the first summand with 
$$
\mR_0^{+ }   - \mR_0^{- }-imM_{11}=E_1(z)=\widetilde O_1(z^{1/2}\la x_1-x_2\ra^{1/2}).
$$
Similarly, in the second summand we replace $\mathcal{R}^{+}_{L,2}(z)(y_1,y)- \mathcal{R}^{-}_{L,2}(z)(y_1,y)$ with
$$
\mathcal{R}^{+}_{L,2} - \mathcal{R}^{-}_{L,2} -im M_{11} \chi(z\la y\ra) = imM_{11}(\chi(z|y-y_1|)-\chi(z\la y\ra))+\widetilde O_1 ( z  )=\widetilde O_1 \big( (z\la y_1\ra)^{0+}\big).
$$
In the first equality we used  \eqref{eq:RL2pm}, and the second equality follows from the mean value theorem.
 
 Combining these bounds with \eqref{eq:R1 useful1}, \eqref{eq:RL2bounds}, and \eqref{eq:R0bounds} we obtain
  (with $r_0=|x-x_1|$, 
 $r_1=|x_1-x_2|$,  $r_2=|y_1-y|$)
\begin{multline}\label{eq:j=3proof}
\big\|\partial_z\big[\chi(z)\Gamma_3^2(z)\big]\big\|_{L^1_z}\\  \les
 \int_{\R^6} \int_0^{z_0}\frac{(1+r_0^{-1})(r_1^{1/2}+r_1^{-1}) |QD_0Q(x_2,y_1)|  (1+\log^-(r_2) \big)}{\la x_1\ra^{3}\la x_2\ra^{\f32}  \la y_1\ra^{\f32-} z^{1-}}  dz dx_1dx_2dy_1.
\end{multline}
One can see that this is bounded in $x$ and $y$ using the integrability of $z^{-1+}$ on $[0,z_0]$, Lemma~\ref{lem:potential}, and the absolute boundedness of $QD_0Q$. This finishes the proof.
\end{proof}

\begin{lemma} \label{lem:gamma j=4}  Under the assumptions of Theorem~\ref{th:main1}   we have  $ I(\Gamma^i_4) =O( \la t\ra^{-1})$ uniformly in $x$ and $y$  for each $i=1,2$.
\end{lemma}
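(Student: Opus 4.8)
The plan is to run the same argument as in Lemmas~\ref{lem:gamma j=1}--\ref{lem:gamma j=3}: by \eqref{eq:ibp1} it is enough to verify the $L^1_z$-bound \eqref{eq:L1partial} for $\Gamma=\Gamma^i_4$, $i=1,2$, and then conclude $I(\Gamma^i_4)=O(\la t\ra^{-1})$ uniformly in $x,y$. What makes the terms $\Gamma^i_4$ simpler is that \emph{no cancellation between the $+$ and $-$ branches is needed}. Indeed, from Lemma~\ref{lem:Minverseregular} the error term satisfies $\big\|\sup_{0<z\ll1}z^{-1/2}|E^\pm(z)(\cdot,\cdot)|\big\|_{HS}\les1$ and $\big\|\sup_{0<z\ll1}z^{1/2}|\partial_z E^\pm(z)(\cdot,\cdot)|\big\|_{HS}\les1$ (only the $j=0,1$ bounds are used, so the hypothesis $\delta>3$ of Theorem~\ref{th:main1} suffices), and the extra factor $z^{1/2}$ carried by $\|E^\pm(z)\|_{HS}$ absorbs the $z^{-1}$ loss produced whenever a $z$-derivative lands on a free resolvent, for which \eqref{eq:R0bounds} gives $|\mR_0^\pm(z)(u,v)|\les|\log z|\big(\la u-v\ra^{1/2}+|u-v|^{-1}\big)$ and $|\partial_z\mR_0^\pm(z)(u,v)|\les z^{-1}\big(\la u-v\ra^{1/2}+|u-v|^{-1}\big)$. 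Accordingly I would bound the $+$ and $-$ pieces of each $\Gamma^i_4$ separately.

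For $\Gamma^1_4$ from \eqref{eq:Gamma14} I would isolate the middle factor, writing $A^\pm(z):=\mR_1(z)V\mR_0^\pm(z)v^*$ and $B^\pm(z):=v\mR_0^\pm(z)V\mR_1(z)$, so that
$$\Gamma^1_4(z)(x,y)=\int_{\R^4}A^\pm(z)(x,x_2)\,E^\pm(z)(x_2,x_3)\,B^\pm(z)(x_3,y)\,dx_2\,dx_3.$$
By the Cauchy--Schwarz inequality and $\|E^\pm(z)\|_{L^2\to L^2}\le\|E^\pm(z)\|_{HS}\les z^{1/2}$ this yields
$$|\Gamma^1_4(z)(x,y)|\les\|A^\pm(z)(x,\cdot)\|_{L^2}\,\|E^\pm(z)\|_{HS}\,\|B^\pm(z)(\cdot,y)\|_{L^2},$$
and distributing $\partial_z$ by the Leibniz rule over the four factors $\chi(z)$, $A^\pm(z)$, $E^\pm(z)$, $B^\pm(z)$ gives the corresponding bound for $\partial_z[\chi(z)\Gamma^1_4(z)]$. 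Using \eqref{eq:R1 useful1} for $\mR_1$, the bounds \eqref{eq:R0bounds} for $\mR_0^\pm$, the decay $|V(x)|\les\la x\ra^{-3-}$ and $|v_{ij}(x)|\les\la x\ra^{-3/2-}$, and Lemma~\ref{lem:potential} to carry out the internal integration (in $x_1$), one checks uniformly in $x$ that $\|A^\pm(z)(x,\cdot)\|_{L^2}\les|\log z|$ and $\|\partial_z A^\pm(z)(x,\cdot)\|_{L^2}\les z^{-1}$ (the $L^2$-norm being in $x_2$, with convergence supplied by the weight $\la x_2\ra^{-3/2-}$ in $v^*$ against the $\la x_1-x_2\ra^{1/2}+|x_1-x_2|^{-1}$ growth of $\mR_0^\pm$); the same bounds hold for $B^\pm(z)(\cdot,y)$ in $x_3$, uniformly in $y$, by symmetry.

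Combining these estimates gives
$$\big\|\partial_z\big[\chi(z)\Gamma^1_4(z)\big]\big\|_{L^1_z}\les\int_0^{z_0}\big(z^{-1/2}|\log z|+z^{-1/2}|\log z|^2\big)\,dz+(\text{term from }\chi')\les1,$$
where the $\chi'$-contribution is supported on $z\approx z_0$, on which every factor is bounded; this is exactly \eqref{eq:L1partial}. For $\Gamma^2_4$ from \eqref{eq:Gamma24} the argument is identical after replacing the right factor by $v\mathcal{R}^\pm_{L,2}$, since by \eqref{eq:RL2bounds} the bounds $|\mathcal{R}^\pm_{L,2}(z)(u,v)|\les|\log z|+\log^-|u-v|$ and $|\partial_z\mathcal{R}^\pm_{L,2}(z)(u,v)|\les z^{-1}\big(1+\log^-|u-v|\big)+|\chi'(z|u-v|)|z^{-1}|\log z|$ are no worse than those used for $B^\pm$ (the $\chi'$ piece being concentrated on $|u-v|\sim z^{-1}$ and harmless after the $L^2$-integration in $x_3$). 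The step I expect to require the most care is the uniform-in-$(x,y)$ convergence of the spatial integrals defining $\|A^\pm(z)(x,\cdot)\|_{L^2}$ and its $z$-derivative: one must balance the $\la u-v\ra^{1/2}$ growth of $\mR_0^\pm$ and the local $|x-x_1|^{-1}$ singularity of $\mR_1$ against the polynomial decay of $V$ and $v$, which is precisely where Lemma~\ref{lem:potential} and the hypothesis $\delta>3$ enter; the fact that $E^\pm$ is controlled only in Hilbert--Schmidt (rather than pointwise) norm is what forces the Cauchy--Schwarz splitting above instead of a direct kernel estimate.
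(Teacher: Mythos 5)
Your proposal is correct and follows exactly the route the paper intends but leaves implicit: since no $\pm$ cancellation or orthogonality is available for $\Gamma^i_4$, one exploits the smallness $\|E^\pm(z)\|_{HS}\les z^{1/2}$, $\|\partial_z E^\pm(z)\|_{HS}\les z^{-1/2}$ from Lemma~\ref{lem:Minverseregular}, bounds the outer kernels in $L^2$ via \eqref{eq:R1 useful1}, \eqref{eq:R0bounds}, \eqref{eq:RL2bounds} and Lemma~\ref{lem:potential}, and concludes through \eqref{eq:L1partial} and \eqref{eq:ibp1}. The Cauchy--Schwarz splitting through the Hilbert--Schmidt middle factor and the observation that only the $j=0,1$ bounds (hence $\delta>3$) are needed are exactly the details the paper omits.
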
 
\begin{proof} The proof is similar to the proof of previous three lemmas. Instead of cancellation between $\pm$ terms or orthogonality,  one uses the smallness of the error term in $z$, see 
Lemma~\ref{lem:Minverseregular}. We omit the details. 
 \end{proof}

To estimate the third term, $\mathcal{R}_1v^*M^{-1}_{\pm} v\mathcal{R}^{\pm}_H$, in \eqref{R1exp} we use Lemma~3.4 from \cite{egd}. 
\begin{lemma}\label{stat phase}

	Let $\phi_ \pm(z):=\sqrt{z^2+m^2} \pm \frac{z r}{t}$, if 
	$$
	|a(z)|\les 	\frac{z\chi(z) \widetilde \chi(zr)}{(1+zr)^{\f12}} ,
	\qquad |\partial_z a(z)|\les 
	\frac{\chi(z) \widetilde \chi(zr)}{(1+zr)^{\f12}}
	$$	
	then we have the bound
	$$
    		\bigg| \int_{0}^{\infty} e^{-it\phi_{\pm}(z)} a(z)\, dz \bigg|
    		\lesssim \la t\ra^{-1}.
	$$

\end{lemma}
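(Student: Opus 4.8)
The plan is to treat this as a one-dimensional oscillatory integral estimate in which the (possibly present) stationary point of the phase is isolated. First record some elementary facts: $a$ is supported in $z\in[z_0/r,2z_0]$, so the integral is nonzero only when $r\gtrsim1$, and (taking the cut-off radius $z_0\ll m$) on this support $z\ll m$, hence $\phi_\pm''(z)=\frac{m^2}{(z^2+m^2)^{3/2}}\approx1$ and $|\phi_\pm^{(j)}(z)|\lesssim1$ for all $j$, with constants depending only on $m,z_0$; also $|a(z)|\lesssim z(zr)^{-1/2}$ and $|a'(z)|\lesssim(zr)^{-1/2}$ there. For $|t|\le2$ the claim is trivial since $|\int_0^\infty e^{-it\phi_\pm}a\,dz|\le\int|a|\,dz\lesssim r^{-1/2}\int_{z_0/r}^{2z_0}z^{1/2}\,dz\lesssim1$. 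Finally, complex conjugation together with $t\mapsto-t$ interchanges $\phi_+$ and $\phi_-$ and replaces $a$ by $\bar a$ (which obeys the same bounds), so it suffices to prove $|\int_0^\infty e^{-it\phi_\pm}a\,dz|\lesssim t^{-1}$ for $t>2$.

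Since $\phi_\pm'(z)=\frac{z}{\sqrt{z^2+m^2}}\pm\frac rt$, suppose first that the equation $\frac{z}{\sqrt{z^2+m^2}}=\mp\frac rt$ has no solution in a fixed dilate of $[z_0/r,2z_0]$; this is always the case for $\phi_+$ with $t>0$, and also for $\phi_-$ when $r/t\gtrsim z_0/m$ or when $r/t\ll z_0/r$. Then $|\phi_\pm'(z)|\gtrsim z$ throughout the support, and a single integration by parts (legitimate since $a$ vanishes to infinite order at the endpoints of its support and only $a$, $a'$, and the smooth $\phi_\pm''$ appear) gives
\[
\Big|\int_0^\infty e^{-it\phi_\pm}a\,dz\Big|\lesssim\frac1t\int_{z_0/r}^{2z_0}\Big(\frac{|a'|}{|\phi_\pm'|}+\frac{|a|\,|\phi_\pm''|}{(\phi_\pm')^2}\Big)dz\lesssim\frac1t\int_{z_0/r}^{2z_0}\frac{dz}{z\,(zr)^{1/2}}\lesssim\frac1t .
\]
The mechanism is that the factor $z$ in the bound on $a$ (and the absence of a negative power of $z$ in the bound on $a'$) cancels the $z^{-1},z^{-2}$ produced by $|\phi_\pm'|^{-1},|\phi_\pm'|^{-2}$, leaving the integrable weight $z^{-3/2}$, whose integral is $\lesssim(z_0/r)^{-1/2}r^{-1/2}\lesssim1$.

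The remaining case is $\phi_-$ with $r/t$ small enough that the critical point $z_*$, defined by $\frac{z_*}{\sqrt{z_*^2+m^2}}=\frac rt$, lies in a fixed dilate of the support; then $z_*\approx\frac{mr}{t}$, and — crucially — $z_*\gtrsim z_0/r$ forces $r\gtrsim t^{1/2}$. We split the support into $[z_0/r,z_*/2]$, $[z_*/2,2z_*]$, and $[2z_*,2z_0]$ (intersected with the support; some pieces may be empty or merge). On $[z_0/r,z_*/2]$ one has $\frac{z}{\sqrt{z^2+m^2}}\approx\frac zm\le\frac{z_*}{2m}\lesssim\frac12\frac rt$, so $|\phi_-'|\gtrsim\frac rt$, and integration by parts (together with its boundary term at $z_*/2$) bounds this piece by $\frac{z_*^{1/2}}{r^{3/2}}+\frac{t\,z_*^{3/2}}{r^{5/2}}\lesssim\frac1{t^{1/2}r}\lesssim t^{-1}$, using $z_*\approx mr/t$ and $r\gtrsim t^{1/2}$. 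On $[2z_*,2z_0]$ one has $|\phi_-'(z)|\approx\frac{z-z_*}m\gtrsim z$, and integration by parts (whose boundary term at $2z_*$ is of the same order) gives $\lesssim\frac1t\int_{2z_*}^{2z_0}\frac{dz}{z(zr)^{1/2}}\lesssim\frac1{t\,r^{1/2}z_*^{1/2}}\lesssim\frac1{t^{1/2}r}\lesssim t^{-1}$. On the middle interval $[z_*/2,2z_*]$, which contains the stationary point, we invoke the van der Corput estimate (second-derivative test), using $|\phi_-''|\gtrsim1$:
\[
\Big|\int_{z_*/2}^{2z_*}e^{-it\phi_-}a\,dz\Big|\lesssim t^{-1/2}\Big(\sup_{z\approx z_*}|a(z)|+\int_{z_*/2}^{2z_*}|a'|\,dz\Big)\lesssim t^{-1/2}\Big(\frac{z_*}{r}\Big)^{1/2}\lesssim t^{-1/2}\cdot t^{-1/2}=t^{-1},
\]
since $|a|\lesssim z_*(z_*r)^{-1/2}$ and $\int|a'|\lesssim z_*(z_*r)^{-1/2}$ on this interval and $z_*/r\approx m/t$. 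Adding the three pieces completes the proof.

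The hard part is the middle interval: van der Corput by itself yields only $t^{-1/2}$ decay, so one genuinely needs the amplitude decay $(1+zr)^{-1/2}$ together with the relation $z_*\approx mr/t$ at the stationary point to promote $t^{-1/2}$ to $t^{-1}$. Tied to this is the structural observation that a stationary point lying inside a dilate of the support forces $r\gtrsim t^{1/2}$, which is precisely what lets the two off-stationary pieces integrate to $t^{-1}$ rather than merely $t^{-1/2}$; without this input the integration-by-parts estimates near and beyond $z_*$ would not close.
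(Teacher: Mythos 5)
Your argument is correct: the support conditions force $r\gtrsim 1$ and $z\ll m$, the non-stationary regimes are handled by one integration by parts with the lower bounds $|\phi_\pm'|\gtrsim z$ (resp.\ $\gtrsim r/t$ to the left of $z_*$), and in the stationary regime the amplitude size $(z_*/r)^{1/2}\approx (m/t)^{1/2}$ at $z_*\approx mr/t$, together with the observation that $z_*\gtrsim z_0/r$ forces $r\gtrsim t^{1/2}$, upgrades the van der Corput bound $t^{-1/2}$ to $t^{-1}$; the boundary terms at $z_*/2$ and $2z_*$ are accounted for and are of the right size. (One cosmetic slip: the condition for the critical point to lie below the support should read $r/t\ll z_0/(mr)$ rather than $z_0/r$; since $m$ is fixed this is harmless.) Note, however, that the paper does not prove this lemma at all -- it is quoted as Lemma~3.4 of \cite{egd} -- and the proof there, mirrored in this paper by the proof of Lemma~\ref{lem:high2}, is organized differently: one writes the phase as $\sqrt{z^2+m^2}-zr/t$, locates the critical point $z_*=mr/\sqrt{t^2-r^2}$, and applies the weighted stationary-phase estimate of Lemma~\ref{stat phase general} (Schlag's lemma), estimating $\int_{|z-z_*|<t^{-1/2}}|a|\,dz$ by $t^{-1/2}\sup|a|\approx t^{-1/2}(z_*/r)^{1/2}\approx t^{-1}$ and the complementary piece by the $|z-z_*|^{-1},|z-z_*|^{-2}$-weighted integrals, with a separate integration by parts when $z_*$ is far from the support. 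So the two routes exploit exactly the same cancellation (amplitude decay $(1+zr)^{-1/2}$ evaluated at the stationary point), but yours replaces the black-box weighted lemma by an explicit three-piece splitting at $z_*/2$ and $2z_*$ with van der Corput in the middle; this is more self-contained and makes the mechanism (and the role of $r\gtrsim t^{1/2}$) transparent, while the paper's route is shorter once Lemma~\ref{stat phase general} is available and recycles a tool it needs elsewhere (e.g.\ in Lemma~\ref{lem:high2}). Your symmetry reduction to $t>2$ and the trivial bound for $|t|\le 2$ are also fine.
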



\begin{lemma} \label{lem:r1rh}We have $ |I(\mathcal{R}_1v^*M^{-1}_{\pm} v\mathcal{R}^{\pm}_H)| \les \la t \ra ^{-1} $. 
\end{lemma}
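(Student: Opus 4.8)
The plan is to reuse the two devices behind the $\Gamma^i_j$ estimates. First, apply the iterated identity \eqref{FIT} once to trade the factor $\mR_1v^*$ --- which is bounded neither $L^1\to L^2$ nor $L^2\to L^\infty$ --- for $\mR_1V$:
$$\mR_1v^*M^{-1}_\pm v\mR^\pm_H=\mR_1V\mR^\pm_H-\mR_1V\mR_0^\pm v^*M^{-1}_\pm v\mR^\pm_H,$$
so that it suffices to estimate $I$ of each summand by $\la t\ra^{-1}$. Second, exploit that $\mR^\pm_H(z)(x',y)=\widetilde{\chi}(z|x'-y|)\,e^{\pm iz|x'-y|}\tilde\omega(z(x'-y))$ oscillates in $z$: combining $e^{-it\sqrt{z^2+m^2}}$ with $e^{\pm iz|x'-y|}$ produces a phase $\sqrt{z^2+m^2}\mp z|x'-y|/t$ of precisely the form covered by the stationary phase Lemma~\ref{stat phase}. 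In contrast to the $\Gamma^i_j$ terms, no cancellation between the $+$ and $-$ branches is used; the decay in $t$ comes entirely from this oscillation. The reason \eqref{FIT} is needed is that $|\mR_1(x,x_1)|\,|V(x_1)|\les(1+|x-x_1|^{-1})\la x_1\ra^{-\delta}$ is $L^1$ in $x_1$ uniformly in $x$ when $\delta>3$, while $\mR_1v^*$ is not.

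Consider the first summand. Writing $r=|x_1-y|$ and
$$a(z)=\f{z\chi(z)}{\sqrt{z^2+m^2}}\,\mR_1(x,x_1)V(x_1)\,\widetilde{\chi}(zr)\,\tilde\omega\big(z(x_1-y)\big),$$
we have $I(\mR_1V\mR^\pm_H)(x,y)=\int_{\R^2}\big(\int_0^\infty e^{-it\phi_\mp(z)}a(z)\,dz\big)\,dx_1$ with $\phi_\mp$ as in Lemma~\ref{stat phase}. By \eqref{largr}, $|\tilde\omega(z(x_1-y))|\les(1+zr)^{-1/2}$ and $|\partial_z\{\tilde\omega(z(x_1-y))\}|\les r(1+zr)^{-3/2}\les z^{-1}(1+zr)^{-1/2}$; by \eqref{eq:R1 useful1}, $|\partial_z\mR_1(x,x_1)|\les1$; and the derivative landing on $\widetilde{\chi}(zr)$ contributes $r\widetilde{\chi}'(zr)=O(z^{-1})$ on its support. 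Since the prefactor carries a factor $z$, $a$ satisfies the hypotheses of Lemma~\ref{stat phase} with implied constant $\les|\mR_1(x,x_1)||V(x_1)|$; the lemma gives that the inner integral is $\les\la t\ra^{-1}|\mR_1(x,x_1)||V(x_1)|$, and integrating in $x_1$ gives $|I(\mR_1V\mR^\pm_H)|\les\la t\ra^{-1}$.

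For the second summand, decompose $\mR_0^\pm=\mR_1+\mR^\pm_{L,2}+\mR^\pm_H$ and insert $M^{-1}_\pm(z)=h^\pm(z)^{-1}S+QD_0Q+E^\pm(z)$ from Lemma~\ref{lem:Minverseregular}. Each resulting term is $\mR_1V$ followed by a chain of Hilbert--Schmidt or absolutely bounded factors, to be controlled by the same estimates as in Lemmas~\ref{lem:gamma j=1}--\ref{lem:gamma j=4} (using $|h^\pm(z)^{-1}|+z|\partial_z h^\pm(z)^{-1}|\les1$, the absolute boundedness of $S$ and $QD_0Q$, the bounds $|\partial_z^jE^\pm|\les z^{-1/2}$, the expansions \eqref{eq:R1 useful1} and Remark~\ref{rmk:RL2}, and the decay of the $v$'s), together with one oscillatory factor $e^{\pm iz\rho}\tilde\omega(z\,\cdot\,)$, or two of them when $\mR_0^\pm$ supplies its own $\mR^\pm_H$ part. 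Pulling all of the exponentials into $e^{-it\sqrt{z^2+m^2}}$ yields a phase $\sqrt{z^2+m^2}\pm zr/t$ with $0\le r\le\rho_1+\rho_2$, and the amplitude hypothesis of Lemma~\ref{stat phase} is preserved because
$$\tilde\omega(z\rho_1)\,\tilde\omega(z\rho_2)=\widetilde O_1\big((1+z\rho_1)^{-1/2}(1+z\rho_2)^{-1/2}\big)=\widetilde O_1\big((1+zr)^{-1/2}\big),$$
which uses $(1+a)(1+b)\ge1+a+b$. Lemma~\ref{stat phase} then supplies $\la t\ra^{-1}$, uniformly in the spatial variables, and the residual spatial integral converges just as in Lemmas~\ref{lem:gamma j=1}--\ref{lem:gamma j=4}.

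The work is entirely in the second summand's bookkeeping: one must check that, after stripping the oscillatory exponential(s), the single factor $z$ in $z\chi(z)/\sqrt{z^2+m^2}$ absorbs every $z^{-1}$- and $z^{-1/2}$-loss coming from $\partial_z h^\pm(z)^{-1}$, $\partial_zE^\pm$, $\partial_z\tilde\omega$ and $\partial_z\mR^\pm_{L,2}$, leaving an amplitude bounded on $\mathrm{supp}\,\chi$, and simultaneously that the product of the $|x-x_1|^{-1}$, $|x_1-x_2|^{-1}$ and logarithmic ($|\log z|$, $\log^-$) singularities, weighted by $V$ and the $v$'s, integrates to an $(x,y)$-uniform constant. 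The identity \eqref{FIT} is what forces the worst, $|x-\cdot|^{-1}$, singularity to sit against $V$ rather than $v^*$, and the super-additivity inequality above is what lets Lemma~\ref{stat phase} apply even when two factors oscillate at once.
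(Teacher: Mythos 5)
Your treatment of the first summand $\mR_1V\mR^\pm_H$ coincides with the paper's: apply \eqref{FIT}, put the oscillation of $\mR_H^\pm$ into the phase, verify the amplitude hypotheses of Lemma~\ref{stat phase}, and integrate in the intermediate spatial variable. That part is fine.

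For the second summand $\mR_1V\mR_0^\pm v^*M_\pm^{-1}v\mR_H^\pm$ there is a genuine gap. Your accounting only tracks the $z^{-1}$ and $z^{-1/2}$ losses coming from \emph{derivatives} ($\partial_z h_\pm^{-1}$, $\partial_z E^\pm$, $\partial_z\tilde\omega$, $\partial_z\mR_{L,2}^\pm$), and you then assert that the prefactor $z$ leaves ``an amplitude bounded on $\mathrm{supp}\,\chi$.'' That assertion fails for the undifferentiated term in which the leading singular piece $2m\,g^\pm(z)M_{11}$ of the middle resolvent (present in $\mR_{L,2}^\pm$ via Remark~\ref{rmk:RL2}, or in the expansion of Lemma~\ref{lem:R0exp}) is paired with the $QD_0Q$ piece of $M_\pm^{-1}$: this produces an amplitude of size $z\,|\log z|$, which does \emph{not} satisfy $|a(z)|\les z\chi(z)\widetilde\chi(zr)(1+zr)^{-1/2}$, and whose derivative is of size $|\log z|$, not $O(1)$. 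Neither Lemma~\ref{stat phase} nor the integration-by-parts criterion \eqref{eq:ibp1} tolerates this logarithm. The missing idea — and the crux of the paper's argument here — is that this term \emph{vanishes identically} by the orthogonality $M_{11}v^*Q=QvM_{11}=0$, so that $\mR_1V\mR_0^\pm v^*M_\pm^{-1}v=\widetilde O_1(1)$ rather than $\widetilde O_1(\log z)$; only then do the hypotheses of Lemma~\ref{stat phase} hold. (The pairings of $g^\pm(z)M_{11}$ with $h_\pm^{-1}S$ and with $E^\pm$ are harmless, since $g^\pm/h^\pm=\widetilde O_1(1)$ and $g^\pm E^\pm=\widetilde O_1(z^{1/2-})$.) A secondary point: if you insist on your decomposition, the cutoff $\chi(z|x_1-x_2|)$ attached to $g^\pm(z)M_{11}$ inside $\mR_{L,2}^\pm$ obstructs a direct application of the orthogonality; you must either use the cutoff-free expansion of Lemma~\ref{lem:R0exp} for the middle resolvent or separately estimate the $\widetilde\chi(z|x_1-x_2|)g^\pm(z)$ remainder as in the proof of Lemma~\ref{lem:gamma j=3}. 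Your handling of the doubly oscillatory case via $(1+z\rho_1)^{-1/2}(1+z\rho_2)^{-1/2}\le(1+z(\rho_1+\rho_2))^{-1/2}$ is correct but not needed in the paper, which simply absorbs the middle $\mR_H^\pm$ into the $\widetilde O_1(1)$ block.
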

\begin{proof} Using \eqref{FIT} we have 
$$\mathcal{R}_1v^*M^{-1}_{\pm} v\mathcal{R}^{\pm}_H = \mathcal{R}_1V\mathcal{R}^{\pm}_H+ \mathcal{R}_1V \mR_0^{\pm} v^*M^{-1}_{\pm} v\mathcal{R}^{\pm}_H. $$
We start estimating the first term. Note that we have 
$$
\int_0^\infty e^{-it\sqrt{z^2+m^2}} \frac{z\chi(z)}{\sqrt{z^2+m^2}}  [\mathcal{R}_1V\mathcal{R}^{\pm}_H ] (x,y)  dz =   \int_{\R^2} \int_{0}^{\infty}  e^{-it\phi_{\pm}(z)} a(z)  \,  dz  dy_1
$$
where $r=|y-y_1|$ and 
$$a(z) =  \frac{z\chi(z) \widetilde \chi(zr)}{\sqrt{z^2+m^2}} [\mathcal{R}_1V](x,y_1) \tilde{\omega}_{\pm}(zr). $$
Here $\tilde{\omega}$ satisfies the same bound as $\omega$ in \eqref{largr}.
By using \eqref{eq:R1 useful}, we may immediately use Lemma~\ref{stat phase} and integrate in $y_1$ since $(1+|x-y_1|^{-1})V(y_1)\in L^1_{y_1}$ uniformly in $x$.

	The second term is bounded similarly.  
	Recall the expansion for $M^{-1}_{\pm}$ from Lemma~\ref{lem:Minverseregular} and the expansion in Lemma~\ref{lem:R0exp} for $R_0^{\pm}$ and the definition of $\mR_H$.  To apply Lemma~\ref{stat phase} to obtain the desired time decay, we need only show that $\mR_1V\mR_0^\pm v^* M_{\pm}^{-1}v=\widetilde O_1(1)$ in the spectral variable, and converges in an appropriate sense.  The convergence of the spatial integrals has been established in Lemma~\ref{lem:gamma j=2} for example.  The most simple estimate would yield that $\mR_1V\mR_0^\pm v^* M_{\pm}^{-1}v=\widetilde O_1(\log z)$.  This bound is not sharp as the $\log z$ behavior arises from when the most singular terms in $\mR_0^\pm(z)$ and the $M_{\pm}^{-1}(z)$ interact.  However, using the expansions in Lemma~\ref{lem:R0exp} and \ref{lem:Minverseregular}, the most singular terms are
	\begin{align*}
		\mR_1V\mR_0^\pm v^* M_{\pm}^{-1}v=\mR_1 V [ 2m g^\pm(z) M_{11} ]v^* QD_0Q v	+\widetilde O_1(1).
	\end{align*}
	Using the orthogonality $ M_{11} Q v^* =0$ the first term vanishes and we have the needed bounds to apply Lemma~\ref{stat phase}. 
\end{proof}

Lastly we consider the contribution of $\mR_4^\pm v^*M_{\pm}^{-1}v \mR_0^{\pm}$ to the Stone's formula, \eqref{noweight}. As before we write
\be \label{eq:R4R0}
\mR^{\pm}_4v^*M_{\pm}^{-1}v \mR_0^{\pm} = \mR^{\pm}_4v^*M_{\pm}^{-1}v \mR_1+ \mR^{\pm}_4v^*M_{\pm}^{-1}v \mR_{L,2}^\pm + \mR^{\pm}_4v^*M_{\pm}^{-1}v \mR_H^\pm .
\ee
The proof for the first two terms is similar to the one in Lemma~\ref{lem:r1rh} above involving $\mR_H$. It is in fact easier since $\mR_4$ is comparable to  $z \mR_H$. For the last term we refer the reader to the portion of the proof of  Proposition~5.3 in \cite{egd} concerning the operator $\Gamma_3$.  The statement of this proposition asserts a bound from $H^1$ to $ BMO$, however the  argument yields  an $L^1\to L^\infty$ bound.

\subsection{Small energy dispersive estimates in the case of an s-wave resonance}

We need to consider the following terms (see the expansion given by Lemma~4.6 in \cite{egd}): 
\begin{align}\label{eq:Gamma15}
\Gamma^1_5&:=  \mR_1V \big(\mR_0^{+ } v^{*}   h^{+} S_1D_1S_1 v\mR_0^{+}-
	\mR_0^{-} v^{*}  h^{-} S_1D_1S_1v\mR_0^{-}\big)V\mR_1, \\
	\Gamma^1_6&:=   \mR_1V\big( \mR_0^{+} v^{*}Av\mR_0^{+}-\mR_0^{-} v^{*}Av\mR_0^{-}\big) V \mR_1,\label{eq:Gamma16} \\
	\label{eq:Gamma25}\Gamma^2_5& := \mR_1V \big(\mR_0^{+ } v^{*} h^{+} S_1D_1S_1 v\mathcal{R}^{+}_{L,2}-
	\mR_0^{-} v^{*}h^{-} S_1D_1S_1 v\mathcal{R}^{-}_{L,2}\big) , \\
	\Gamma^2_6&:=   \mR_1V\big( \mR_0^{+} v^{*}Av\mathcal{R}^{+}_{L,2}-\mR_0^{-} v^{*}Av\mathcal{R}^{-}_{L,2}\big).\label{eq:Gamma26} 
\end{align}
Here $A=SS_1D_1S_1+S_1D_1S_1S$, which  is an absolutely bounded finite rank operator with no $z$ dependence. However, unlike $QD_0Q$, the orthogonality property holds only on one side.  The other terms in the expansion are similar to the ones we discussed in the regular case and are controlled by Lemmas~\ref{lem:gamma j=1}--\ref{lem:gamma j=4}.

\begin{lemma} \label{lem:gamma j=5}
	Under the assumption of Theorem~\ref{th:main1} and for each $i=1,2$ we have  $ I(\Gamma^i_5) =O( \la t\ra^{-1})$ uniformly in $x$ and $y$.
\end{lemma}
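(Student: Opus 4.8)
The plan is to follow the template established in Lemmas~\ref{lem:gamma j=2} and \ref{lem:gamma j=3}: reduce to showing the bound \eqref{eq:L1partial}, namely $\sup_{x,y}\|\partial_z[\chi(z)\Gamma_5^i(z)]\|_{L^1_z}\les 1$, and exploit the one-sided orthogonality $M_{11}v^*S_1=S_1vM_{11}=0$ (which holds by Remark~\ref{rmk:resonances}(ii), since $S_1\leq Q$) to kill the most singular contribution of the free resolvent on the side where $S_1$ sits.

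First I would split $\Gamma_5^i$ into a ``difference of quotients'' piece plus a ``difference of the outer factor'' piece, exactly as in the proof of Lemma~\ref{lem:gamma j=2}. For $\Gamma_5^2$ write
$$
\Gamma^2_5= \mR_1V \big(h^{+}\mR_0^{+ } -h^{-}\mR_0^{- }\big) v^{*} S_1D_1S_1 v\mathcal{R}^{+}_{L,2} +\mR_1V\, h^{-}\mR_0^{- }\, v^{*} S_1D_1S_1 v\big(\mathcal{R}^{+}_{L,2}-\mathcal{R}^{-}_{L,2}\big),
$$
and analogously for $\Gamma_5^1$ with $\mR_0^{\pm}V\mR_1$ in place of $\mathcal{R}^{\pm}_{L,2}$. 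The key point is that $h^\pm(z)\mR_0^\pm(z)(x_1,x_2)$, near $z=0$, has leading term $2m\,h^\pm(z)g^\pm(z)M_{11}$, but this is annihilated by the adjacent $v^*S_1$; what survives is $h^\pm(z)$ times the $\mathcal G_0$-type and $E_0^\pm$ error terms of Lemma~\ref{lem:R0exp}. Since $h^\pm(z)=\mathbbm g^\pm(z)+O(1)=\widetilde O(\log z)$, and the surviving resolvent factor together with its $z$-derivative is $\widetilde O_1\big((|x_1-x_2|^{1/2}+|x_1-x_2|^{-1})\big)$ after the $M_{11}$ part is removed, the product $h^\pm(z)\mR_0^\pm v^*S_1$ (and its derivative) is bounded by $|\log z|\cdot(r_1^{1/2}+r_1^{-1})$ with at worst a $z^{-1}$ loss from the derivative on $E_0^\pm$; crucially no net logarithmic blow-up. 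For the difference $h^+\mR_0^+-h^-\mR_0^-$ one gains as in Lemma~\ref{lem:gamma j=2}: the $M_{11}$ terms cancel in the difference even before orthogonality, and the remainder is $\widetilde O_1(z^{1/2-}\la r_1\ra^{1/2})$, which is even better.

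The remaining estimates are then mechanical. I combine these bounds with \eqref{eq:R1 useful1} for $\mR_1$ on the left, \eqref{eq:RLdifbounds}--\eqref{eq:RL2bounds} for the $\mathcal{R}^\pm_{L,2}$ factor (respectively \eqref{eq:R0difbounds}--\eqref{eq:R0bounds} for the $\mR_0^\pm V\mR_1$ factor in $\Gamma_5^1$), and use the absolute boundedness and finite rank of $S_1D_1S_1$ together with the decay of $v$ (here $|v_{ij}(x)|\les\la x\ra^{-5/2-}$, so $|V(x_1)|\les\la x_1\ra^{-5-}$ comfortably absorbs the polynomial weights $r_1^{1/2}$, $\la x_1-x_2\ra^{1/2}$, etc.). After integrating out the kernel of $S_1D_1S_1$ in $x_2,y_1$, the spatial integrals are handled exactly by Lemma~\ref{lem:potential}, and the $z$-integral converges since the only singular factor is an integrable power $z^{-1+}$ (or $z^{-1/2}$ in the difference terms) on $[0,z_0]$. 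The main obstacle — and the only place requiring care — is bookkeeping the interaction of the $\log z$ from $h^\pm(z)$ with the $1/z$ from differentiating $E_0^\pm(z)$: one must verify that the $\int_0^{z_0} z^{-1}|\log z|^{-2}\,dz$-type integral that ultimately appears is convergent, which it is, and that no term produces $z^{-1}$ without a compensating $|\log z|^{-2}$; this is where the cancellation of $M_{11}$-terms in the difference and the $S_1$-orthogonality in the single-resolvent terms are both genuinely used. I would present $\Gamma_5^2$ in full and remark that $\Gamma_5^1$ is identical with one more copy of $\mR_1V$ and the bounds \eqref{eq:R0difbounds}--\eqref{eq:R0bounds}.
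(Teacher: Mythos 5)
Your overall skeleton (telescoping the $\pm$ difference, invoking $S_1\le Q$ so that $M_{11}v^*S_1=S_1vM_{11}=0$, then Lemma~\ref{lem:potential} and absolute boundedness of $S_1D_1S_1$) matches the paper, and your second summand $\mR_1V\,h^-\mR_0^-v^*S_1D_1S_1v(\mR^+_{L,2}-\mR^-_{L,2})$ is handled correctly by the argument of Lemma~\ref{lem:gamma j=3} (the extra $\log z$ from $h^-$ is harmless against the $z^{1-}$ gain). The gap is in your first summand. After the left orthogonality removes the $2m\,h^\pm g^\pm M_{11}$ parts, what survives of $h^+\mR_0^+-h^-\mR_0^-$ is $(h^+-h^-)\mathcal G_0+h^+E_0^+-h^-E_0^-$ (plus the $(h^+-h^-)E_0^-$ rearrangement), and since $h^+-h^-=mi\|(a,c)\|^2$ is a nonzero \emph{constant}, the piece $(h^+-h^-)\mathcal G_0$ has no smallness in $z$ whatsoever. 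Your claims that ``the $M_{11}$ terms cancel in the difference even before orthogonality'' and that the remainder is $\widetilde O_1(z^{1/2-}\la r_1\ra^{1/2})$ are both false: $h^+g^+-h^-g^-=O(\log z)$, and the surviving difference is only $\widetilde O_1(1)$ in $z$ (with an $r_1^{-1}$ singularity). Consequently, when the $z$-derivative falls on the right-hand factor $\mR^+_{L,2}$, whose derivative is only $O(1/z)$ by \eqref{eq:RL2bounds} (the culprit being $\chi(z|y-y_1|)\,2m\partial_zg^+(z)M_{11}\sim z^{-1}$, which is \emph{not} killed by $S_1v$ because the cutoff depends on $y_1$), you are left with a non-integrable $\int_0^{z_0}z^{-1}\,dz$. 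Your proposed safeguard — that every $z^{-1}$ comes with a compensating $|\log z|^{-2}$ — cannot apply here: in $\Gamma_5$ the factor $h^\pm$ sits in the numerator, not the denominator, so no $|\log z|^{-2}$ is ever available; that bookkeeping belongs to the regular-case terms $\Gamma_2^i$ of Lemma~\ref{lem:gamma j=2}.

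This is exactly the point where the paper must work harder: it isolates the constant-difference piece $\mR_1V\,\mR_0^-v^*(h^+-h^-)S_1D_1S_1v\mR^+_{L,2}$ and uses orthogonality on \emph{both} sides — replacing $\mR_0^-$ by $\mathcal G_0+E_0^-$ on the left, and on the right replacing $\mR^+_{L,2}$ by $F(z,y,y_1)=\mR^+_{L,2}+\frac{mI_1}{\pi}\chi(z\la y\ra)\log(z\la y\ra)$, which is legitimate because the added term is independent of $y_1$ and hence annihilated by $S_1v$. The decisive estimate is then the log-cancellation bound (from \cite{Sc2} or \cite[Lemma 3.3]{eg2})
$$\sup_{0<z<z_0}|F(z,y,y_1)|+\int_0^{z_0}|\partial_zF(z,y,y_1)|\,dz\les 1+\log\la y_1\ra+\log^-|y-y_1|,$$
i.e.\ the difference $\chi(z\la y\ra)\log(z\la y\ra)-\chi(z|y-y_1|)\log(z|y-y_1|)$ has an integrable $z$-derivative even though each term alone does not. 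Your proposal contains neither the right-side subtraction nor this cancellation lemma, and without them the first summand of your split cannot satisfy \eqref{eq:L1partial}.
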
 
\begin{proof}
We only discuss $\Gamma^2_5$; the proof for $\Gamma^1_5$ is similar.
We need to consider the following operators:
\begin{align*}
	\Gamma^2_{5,1}&:=\mR_1V \big(\mR_0^{+ }-\mR_0^{-}\big) v^{*} h^{+} S_1D_1S_1 v\mathcal{R}^{+}_{L,2},\\
	\Gamma^2_{5,2}&:=\mR_1V  \mR_0^{-} v^{*} \big(h^{+}-h^-\big) S_1D_1S_1 v\mathcal{R}^{+}_{L,2},\\
	\Gamma^2_{5,3}&:=\mR_0^{-} v^{*}h^{-} S_1D_1S_1 v\big(\mathcal{R}^{+}_{L,2}-\mathcal{R}^{-}_{L,2}\big).
\end{align*}
Since $S_1\leq Q$, the proof of Lemma~\ref{lem:gamma j=3} implies the required bounds for $\Gamma^2_{5,1}$ and $\Gamma^2_{5,3}$ above. In particular, the bound \eqref{eq:j=3proof} remains valid even with the additional factor of $h^\pm(z)$, as the polynomial gain in $z$ obtained in the proof suffices to control the logarithmic behavior of $h^\pm(z)$.

For $\Gamma^2_{5,2}$ observe that
$h^+-h^-$ is a constant. We utilize the orthogonality property $M_{11}v^*Q=QvM_{11}=0$ to replace $R_0^-$ with $\mathcal G_0 +E_0^{-}$, where
(see \eqref{eq:R0exp})
$$|\mathcal G_0 +E_0^{-}|\les |x_1-x_2|^{-1}+|x_1-x_2|^{1/2},\,\,\,\,
|\partial_z(\mathcal G_0 +E_0^{-})|\les z^{-1/2}(|x_1-x_2|^{-1}+|x_1-x_2|^{1/2}).$$ 
 Similarly we replace $\mR^{+}_{L,2}$ with (see Remark~\ref{rmk:RL2})
\begin{multline*}
F(z,y,y_1):=\mR^{+}_{L,2}+\frac{mI_1}{\pi}\chi(z\la y\ra)\log(z\la y\ra) \\ = \frac{mI_1}{\pi}\big(\chi(z\la y\ra)\log(z\la y\ra)-\chi(z|y-y_1|)\log(z |y-y_1|)\big)+E_2^+(z|y-y_1|).
\end{multline*}
Using Remark~\ref{rmk:RL2} for the error term and \cite{Sc2} or \cite[Lemma 3.3]{eg2} for the first term, we have
$$
\sup_{0<z<z_0}|F(z,y,y_1)|+\int_0^{z_0} |\partial_z F(z,y,y_1)| dz \les 1+\log(\la y_1\ra)+\log^-(|y-y_1|). 
$$
To see this inequality for $\partial_z E^+_2$ take $l=0+$ in Remark~\ref{rmk:RL2} and use the support condition.

Using these bounds and \eqref{eq:R1 useful1} for $\mR_1$, we obtain (with $r_0=|x-x_1|$, 
 $r_1=|x_1-x_2|$,  $r_2=|y_1-y|$) 
\begin{multline*}
\big\|\partial_z\big[\chi(z)\Gamma^2_{5,2}(z)\big]\big\|_{L^1_z} \les \\ 
 \int_{\R^6} \int_0^{z_0}\frac{(1+r_0^{-1})(r_1^{1/2}+r_1^{-1}) |S_1D_1S_1(x_2,y_1)| \big( z^{-1/2}  |F | +|\partial_z F |\big)}{\la x_1\ra^{3}\la x_2\ra^{\f32}  \la y_1\ra^{\f32}}  dz dx_1dx_2dy_1
\les  \\ \int_{\R^6} \frac{(1+r_0^{-1})(r_1^{1/2}+r_1^{-1}) |S_1D_1S_1(x_2,y_1)| ( 1+\log(\la y_1\ra)+\log^-(r_2) )}{\la x_1\ra^{3}\la x_2\ra^{\f32}  \la y_1\ra^{\f32}}  dx_1dx_2dy_1.
\end{multline*}
One can see that this is bounded in $x$ and $y$ using Lemma~\ref{lem:potential} and the absolute boundedness of $S_1D_1S_1$. 
\end{proof}
\begin{lemma} \label{lem:gamma j=6}
	Under the assumption of Theorem~\ref{th:main1} and for each $i=1,2$ we have  $ I(\Gamma^i_6) =O( \la t\ra^{-1})$ uniformly in $x$ and $y$.
\end{lemma}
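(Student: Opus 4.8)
The plan is to establish \eqref{eq:L1partial} for $\Gamma^2_6$ in detail; $\Gamma^1_6$ then follows identically, with the single free resolvent $\mR_0^\pm$ on the right in place of $\mathcal{R}^\pm_{L,2}$ and the extra factor $V\mR_1$ controlled through \eqref{eq:R1 useful1} exactly as in Lemmas~\ref{lem:gamma j=2} and \ref{lem:gamma j=3}. First I would split $A=A_1+A_2$ with $A_1:=SS_1D_1S_1$ and $A_2:=S_1D_1S_1S$; each $A_i$ is a finite rank, absolutely bounded, $z$-independent operator, and the structural point is that $S_1$ occupies the right end of $A_1$ and the left end of $A_2$. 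Throughout I would use the orthogonality from Remark~\ref{rmk:resonances}(ii), $M_{11}v^*S_1=0$, together with its adjoint $S_1vM_{11}=0$ (valid since $S_1$ and $M_{11}$ are self-adjoint).

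Next I would perform the standard $\pm$-splitting of $\mR_0^{+}v^{*}Av\mathcal{R}^{+}_{L,2}-\mR_0^{-}v^{*}Av\mathcal{R}^{-}_{L,2}$ into $(\mR_0^{+}-\mR_0^{-})v^{*}Av\mathcal{R}^{+}_{L,2}$ and $\mR_0^{-}v^{*}Av(\mathcal{R}^{+}_{L,2}-\mathcal{R}^{-}_{L,2})$, so that in each term one of the two outer factors is a resolvent difference. By Remark~\ref{rmk:rdif} and \eqref{eq:RL2pm}--\eqref{eq:pRL2pm} these differences are bounded with bounded $z$-derivative, with leading part $imM_{11}$ (resp.\ $imM_{11}\chi(z|y_1-y|)$) and a small remainder --- $E_1(z)=\widetilde O_1(z^{1/2}\la r\ra^{1/2})$, resp.\ a $\widetilde O_1(z^2r+z^2)$ term. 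The remaining single resolvent in each term a priori carries a $\log z$, hence a $z^{-1}$ derivative, from its $2mg^\pm(z)M_{11}$ piece, and this is precisely where the one-sided orthogonality of $A_i$ is used. For the first term (difference on the left): with $A_2$, its left $S_1$-edge faces the difference and $(\mR_0^{+}-\mR_0^{-})v^*S_1=E_1(z)v^*S_1=\widetilde O_1(z^{1/2})$, so the product with the unimproved $\log z$ on the right is still controlled because $z^{1/2}\cdot z^{-1}=z^{-1/2}$ is integrable; with $A_1$, its right $S_1$-edge faces $v\mathcal{R}^{+}_{L,2}$ and annihilates the $2mg^+(z)\chi M_{11}$ piece, leaving $\chi(zr)2mG_0I_1+E_2^{+}$, which has no $\log z$ and only a $z^{-1/2}$ derivative. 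The second term is symmetric, using instead $\mR_0^{-}v^*S_1=(\mathcal G_0+E_0^{-})v^*S_1$ and $S_1v(\mathcal{R}^{+}_{L,2}-\mathcal{R}^{-}_{L,2})$. Whenever the $M_{11}$-piece being annihilated carries the cutoff $\chi(z|y_1-y|)$, I would first replace $\chi(z|y_1-y|)$ by $\chi(z\la y\ra)+\big(\chi(z|y_1-y|)-\chi(z\la y\ra)\big)$, so orthogonality kills the $\chi(z\la y\ra)M_{11}$ term while the difference is $\widetilde O_1\big((z\la y_1\ra)^{0+}\big)$ by the mean value theorem, exactly as in the proofs of Lemmas~\ref{lem:gamma j=3} and \ref{lem:gamma j=5}.

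After these reductions a product-rule estimate bounds $\partial_z[\chi(z)\Gamma^2_6(z)(x,y)]$, after integrating the intermediate spatial variables, by $\int_0^{z_0}$ of a factor integrable at $z=0$ --- one of $z^{-1/2}|\log z|$, $z^{-1/2}$, or $z^{-1+}$, depending on which mechanism above is used --- times a product of spatially decaying kernels such as $(1+|x-x_1|^{-1})$, $\la x_1\ra^{-3}$, $(|x_1-x_2|^{1/2}+|x_1-x_2|^{-1})$, $|A_i(x_2,y_1)|$, $\la x_2\ra^{-3/2}$, $\la y_1\ra^{-3/2}$, and $(1+\log^-|y_1-y|)$, just as in \eqref{eq:j=3proof}. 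Uniform boundedness in $x,y$ of these spatial integrals follows from Lemma~\ref{lem:potential}, the bounds \eqref{eq:R1 useful1} on $\mR_1$, and the absolute boundedness of the finite rank operators $S$ and $S_1D_1S_1$. This yields \eqref{eq:L1partial}, hence $I(\Gamma^2_6)=O(\la t\ra^{-1})$ by \eqref{eq:ibp1}, and $I(\Gamma^1_6)=O(\la t\ra^{-1})$ the same way. I expect the main obstacle to be purely organizational: checking, over the four combinations (the two summands $A_i$ against the two terms of the $\pm$-splitting), that the one-sided orthogonality always lands on the correct factor, and in particular that in the borderline case --- where orthogonality can only reach a resolvent difference --- the $\widetilde O_1(z^{1/2})$ gain it produces is exactly enough to absorb the $z^{-1}$ derivative of the unimproved $\log z$ on the opposite side. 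The opposite placements of $S_1$ within $A_1$ and within $A_2$ are what make this matching possible in both terms of the splitting.
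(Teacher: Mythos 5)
Your overall strategy is the paper's: split $A=SS_1D_1S_1+S_1D_1S_1S$, perform the $\pm$-splitting, and use the one-sided orthogonality $M_{11}v^*S_1=S_1vM_{11}=0$ on whichever side $S_1$ faces, reducing to the mechanisms of Lemmas~\ref{lem:gamma j=3} and \ref{lem:gamma j=5}. Three of your four sub-cases are handled exactly as the paper prescribes (subtracting $imM_{11}$ from $\mR_0^+-\mR_0^-$, replacing $\mR_0^-$ by $\mathcal G_0+E_0^-$, and the $\chi(z\la y\ra)$ cutoff swap for $\mR^+_{L,2}-\mR^-_{L,2}$), and your observation that in the borderline case the $\widetilde O_1(z^{1/2})$ gain absorbs the $z^{-1}$ derivative of the unimproved $\log z$ is correct.

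There is, however, a genuine gap in the remaining sub-case: the first summand $(\mR_0^+-\mR_0^-)v^*A_1v\mR^+_{L,2}$ with $A_1=SS_1D_1S_1$, where the $S_1$-edge faces $\mR^+_{L,2}$. You propose to annihilate only the $2mg^+(z)\chi M_{11}$ piece (after the cutoff swap) and keep $\chi(zr)\,2mG_0I_1+E_2^+$, claiming this has ``only a $z^{-1/2}$ derivative.'' That is false: when $\partial_z$ hits the cutoff one gets $2m\,r\chi'(zr)G_0(y_1,y)$ with $r=|y_1-y|$, which is of size $z^{-1}|\chi'(zr)|\,(|\log z|+\log^-r)$ pointwise, and whose $L^1_z$ norm is $\approx\log^+|y_1-y|\gtrsim\log\la y\ra$ when $|y|\gg\la y_1\ra$. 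Since the left factor here is merely bounded (it is the difference $\mR_0^+-\mR_0^-$, with no extra power of $z$, and the $S$-edge of $A_1$ gives no orthogonality), nothing compensates this, and the resulting estimate for $\|\partial_z[\chi(z)\Gamma_6^2]\|_{L^1_z}$ grows like $\log\la y\ra$, violating the required uniformity in $y$. The paper's proof avoids this precisely by using the $F$-replacement of Lemma~\ref{lem:gamma j=5} in this slot: the $g^+(z)$ and $G_0$ pieces are kept together as $-\frac{m}{\pi}\chi(zr)\log(zr)I_1$, the $y_1$-independent term $\frac{m}{\pi}\chi(z\la y\ra)\log(z\la y\ra)$ is added and killed by $S_1vM_{11}=0$, and the difference $\chi(z\la y\ra)\log(z\la y\ra)-\chi(zr)\log(zr)$ is controlled by the oscillation lemma from \cite{Sc2}, \cite[Lemma 3.3]{eg2}, which yields $\sup_z|F|+\int_0^{z_0}|\partial_zF|\,dz\les 1+\log\la y_1\ra+\log^-|y-y_1|$ --- crucially a logarithm in $y_1$ (absorbed by the decay of $v$) rather than in $y$. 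With that single substitution your argument goes through.
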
 
\begin{proof} We only discuss $\Gamma^2_6$; the proof for $\Gamma^1_6$ is similar.
We rewrite
$$
\Gamma^2_{6 }:=\mR_1V \big(\mR_0^{+ }-\mR_0^{-}\big) v^{*}   A v\mR^{+}_{L,2}+\mR_1V  \mR_0^{-} v^{*} A v\big( \mathcal{R}^{+}_{L,2}-\mathcal{R}^{-}_{L,2}\big).
$$ 
We note that we can use the cancellation only on one side. When we can only use the cancellation on the left, we replace $\mR_0^{+ }-\mR_0^{-}$ with $\mR_0^{+ }  - \mR_0^{- }-imM_{11}$ as in the proof of Lemma~\ref{lem:gamma j=3}, and replace $\mR_0^-$ with $\mathcal G_0 +E_0^{-}$ as in the proof of Lemma~\ref{lem:gamma j=5} for the first and second summands respectively. If the cancellation is on the right,  we replace $\mR^{+}_{L,2}$ with $F$ as in the proof of Lemma~\ref{lem:gamma j=5}, and replace $\mathcal{R}^{+}_{L,2}-\mathcal{R}^{-}_{L,2}$ with $\mathcal{R}^{+}_{L,2} - \mathcal{R}^{-}_{L,2} -im M_{11} \chi(z\la y\ra)$ as in the proof of Lemma~\ref{lem:gamma j=3} for the first and second summands respectively. We leave the details to the interested reader.
\end{proof}

For the remaining terms involving $\mR_4$ or $\mR_H$ see the previous section and the proof of Proposition 5.6 in \cite{egd}.

\subsection{Large energy dispersive estimates} \label{subnonhigh}  
To prove the large energy dispersive bound uniformly in $x$ and $y$, we restrict to dyadic energy levels.  In particular,
we fix $j\in\mathbb N$, and let $\chi_j(z)$ be  a cut-off to
$z\approx 2^j$, and analyze the contribution of the operators $\chi_j(z) [\mR_V^+-\mR_V^-](z)$ to the Stone's formula.

We begin by employing the resolvent expansion
\be \label{exp:resolvent}
	\mathcal R_V^{\pm}(\lambda) =  \mathcal R_0^{\pm}(\lambda)- \mathcal R_0^{\pm}(\lambda) V\mathcal R_0^{\pm}(\lambda)  + \mathcal R_0^{\pm}(\lambda)  V\mR_V^{\pm} (\lambda) V    \mR_0^{\pm}(\lambda).
\ee
We note that the first two terms are bounded by
$ \min(2^{2j}, 2^{5j/2} |t|^{-1}),$
see Lemmas~6.3 and 6.4 of \cite{egd} respectively.  For the  final term we have  
\begin{prop}\label{prop:2highgamma}
	Under the assumptions of Theorem~\ref{th:main1}
	the following bound holds 
	\begin{multline} \label{eqn: 2j hightail}
	\sup_{x,y\in \R^2}
	\bigg|\int_0^\infty e^{-it\sqrt{z^2+m^2}}\frac{z\chi_j(z)}{\sqrt{z^2+m^2}}  \mathcal R_0^{\pm}V\mathcal R_V^{\pm}   V   \mathcal R_0^{\pm}(\lambda)  (x,y)
	\, dz\bigg|  \\  \les  	  \min(2^{2j},   2^{7j/2} |t|^{-1}).
	\end{multline}
\end{prop}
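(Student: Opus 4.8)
\emph{Strategy.} Fix $j\in\mathbb N$, set $\lambda=\sqrt{z^2+m^2}$, and write $\mathcal K^\pm(z,x,y):=[\mathcal R_0^\pm V\mathcal R_V^\pm V\mathcal R_0^\pm](\lambda)(x,y)$. Since the claimed upper bound is a minimum, I would establish the two bounds $2^{2j}$ and $2^{7j/2}|t|^{-1}$ separately: the first from a pointwise kernel estimate, the second by exploiting the oscillation of $e^{-it\sqrt{z^2+m^2}}$ together with the oscillation already present in the outer free resolvents.

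\emph{The pointwise bound.} On $\text{supp}\,\chi_j$ one has $z\sim 2^j$ and $z/\sqrt{z^2+m^2}\les 1$, so the $2^{2j}$ bound reduces to $\sup_{z\sim2^j,\,x,y}|\mathcal K^\pm(z,x,y)|\les 2^j$ followed by integration over an interval of length $\sim 2^j$. To prove this, use \eqref{eq:Dmpm}, \eqref{R0 def} and the large-argument expansion \eqref{largr} to split each outer resolvent as $\mathcal R_0^\pm(\lambda)=\mathcal R_{0,s}^\pm(z)+\mathcal R_{0,r}^\pm(z)$, where $\mathcal R_{0,s}^\pm(z)(x,y)=\chi(z|x-y|)\mathcal R_0^\pm(\lambda)(x,y)$ is supported in $|x-y|\les 2^{-j}$ with $|\mathcal R_{0,s}^\pm(z)(x,y)|\les |x-y|^{-1}+z\log^-|x-y|$ (the $|x-y|^{-1}$ piece being the $z$-independent operator $\mathcal R_1$), and the regular part obeys $|\partial_z^k\mathcal R_{0,r}^\pm(z)(x,y)|\les z^{1/2}|x-y|^{k-1/2}$ on its support $z|x-y|\gtrsim 1$. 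Substituting into both outer slots: when at least one outer factor is $\mathcal R_{0,r}^\pm$, pair it with $V$ (using $\delta>3$) and estimate by Cauchy--Schwarz and the limiting absorption bound \eqref{eqn:lap1} (case $k=0$), since $|x-y|^{-1/2}$ is locally square-integrable; when both outer factors are the singular $\mathcal R_{0,s}^\pm$, iterate $\mathcal R_V^\pm=\mathcal R_0^\pm-\mathcal R_0^\pm V\mathcal R_V^\pm$ once or twice so that each $|x-y|^{-1}$ singularity is separated by a factor of $V$ and another free resolvent --- exactly as for $\mathcal R_1 V\mathcal R_1$ in \eqref{eq:R1MR1} and Lemmas~\ref{lem:gamma j=1}--\ref{lem:gamma j=4} --- after which the spatial integrals are finite and $x,y$-uniform by Lemma~\ref{lem:potential}, the support restriction $|x-y|\les 2^{-j}$ supplying factors $2^{-j}$ that absorb the logarithms. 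This is the dyadic-frequency reading of the argument in Proposition~5.3 of \cite{egd}, and gives $\sup|\mathcal K^\pm|\les z$.

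\emph{The time decay.} For the $2^{7j/2}|t|^{-1}$ bound, extract the phases $e^{\pm iz|x-x_1|}$ and $e^{\pm iz|y_1-y|}$ from the regular parts of the two outer resolvents and combine them with $e^{-it\sqrt{z^2+m^2}}$. What remains is an oscillatory integral in $z$ over $z\sim2^j$ whose phase is $\sqrt{z^2+m^2}\mp\frac{z(|x-x_1|+|y_1-y|)}{t}$, whose amplitude $\tilde a$ has symbol-type bounds $|\partial_z^k\tilde a|\les z^{1-k}|x-x_1|^{-1/2}|y_1-y|^{-1/2}$ (after the phases are removed, $\partial_z$ no longer brings down the distances), and which carries the operator factor $\mathcal R_V^\pm(z)(x_1,y_1)$. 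Since $z\sim 2^j\gg m$ this phase has at most one non-degenerate critical point ($\frac{z}{\sqrt{z^2+m^2}}=\frac{|x-x_1|+|y_1-y|}{t}$); split the $z$-integral into a neighborhood of it, estimated trivially, and its complement, where one integrates by parts in $z$ --- carried out at the level of operators, so that \eqref{eqn:lap1} (now also case $k=1$, for $\partial_z\mathcal R_V^\pm$; here $\delta>3>\tfrac52$ is enough) can be applied with the $V$-weights kept adjacent to $\mathcal R_V^\pm$. The resulting weighted-$L^2$ operator bound is then converted into an $x,y$-uniform pointwise bound on $[\mathcal R_0^\pm V\mathcal R_V^\pm V\mathcal R_0^\pm](\lambda)(x,y)$ by Cauchy--Schwarz, as in the high-energy section of \cite{egd}; the $V$-decay controls the contribution of the critical region, which lives where $|x-x_1|+|y_1-y|\approx t$. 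The singular parts $\mathcal R_{0,s}^\pm$ carry no phase and are treated by a single integration by parts in $z$ ($\mathcal R_1$ is $z$-independent, the rest is $O(\log z)$ with $O(z^{-1}\log z)$ derivative, and $|x-y|\les 2^{-j}$ again leaves room to spare). Optimizing the size of the critical neighborhood and bookkeeping the powers of $z$ --- each outer resolvent contributing $z^{1/2}$, the measure and van der Corput exponent the rest --- produces the bound $2^{7j/2}|t|^{-1}$.

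\emph{Main obstacle.} The crux is obtaining $|t|^{-1}$ decay while $\mathcal R_V^\pm$ is controlled only as an operator between weighted $L^2$ spaces, not pointwise: differentiating $\mathcal K^\pm$ in $z$ naively forces $\partial_z$ onto an outer free resolvent and produces a factor $|x-x_1|^{1/2}$, which is incompatible with the $\sup_{x,y}$ in the statement. The remedy --- extracting the resolvent phases before differentiating, and performing the $z$-integration by parts at the operator level (which is what makes the $k=1$ bound in \eqref{eqn:lap1} necessary) --- together with the unavoidable loss from the stationary point of the combined phase, is precisely what accounts for the passage from the two-resolvent Born bound $2^{5j/2}$ to $2^{7j/2}$.
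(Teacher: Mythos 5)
Your overall architecture --- dyadic localization, splitting the outer resolvents into a non-oscillatory part supported where $z|x-y|\les 1$ and an oscillatory part $e^{\pm iz|x-y|}\widetilde w_\pm(z|x-y|)$ as in \eqref{eqRL}--\eqref{eqRH}, iterating the resolvent identity so that the limiting absorption principle \eqref{eqn:lap1} can be applied with the weights kept adjacent to $\mR_V^\pm$, and a stationary phase analysis in $z$ --- coincides with the paper's, and your $2^{2j}$ bound is essentially the paper's argument. The time-decay step, however, has a genuine gap. You extract the full phases $e^{\pm iz|x-x_1|}$ and $e^{\pm iz|y_1-y|}$, so your phase $\sqrt{z^2+m^2}\mp z(|x-x_1|+|y_1-y|)/t$ and its critical point $z_0$ depend on the internal variables $x_1,y_1$; you then propose to split the $z$-integral about $z_0$ and integrate by parts ``at the operator level'' using \eqref{eqn:lap1} with $k=1$. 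These two steps are incompatible: the cut-off to $|z-z_0(x_1,y_1)|<\rho$ and the factors $|z-z_0(x_1,y_1)|^{-1}$, $|z-z_0(x_1,y_1)|^{-2}$ produced by Lemma~\ref{stat phase general} are not separable functions of $x_1$ and $y_1$, so they cannot be inserted between $V$ and $\mR_V^\pm$ inside a weighted-$L^2$ operator estimate; and since $\mR_V^\pm(z)(x_1,y_1)$ has no pointwise kernel bound at high energy, you also cannot run the stationary-phase analysis for fixed $(x_1,y_1)$ and integrate afterwards. The paper's resolution, absent from your proposal, is to make the phase depend only on the external variable: in Lemma~\ref{lem:high2} the phase is $\phi(z)=2^{3j}\big(\sqrt{z^2+m^2}-z|y|/t\big)$ (the $2^{3j}$ normalization giving $\phi''\gtrsim 1$ on the support of $\chi_j$), while the residual factor $e^{\pm iz(|y-y_1|-|y|)}$ stays inside the amplitude, where its $z$-derivative is $O(\la y_1\ra)$ and is absorbed by the decay of $V(y_1)$. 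Then $z_0=m|y|/\sqrt{t^2-|y|^2}$ is independent of $x_1,y_1$, the splitting of the $z$-integral commutes with the spatial integrations, and \eqref{eqn:lap1} (for $k=0,1$) applies to the whole composition at each fixed $z$.

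A second, related error is your claim that ``the $V$-decay controls the contribution of the critical region, which lives where $|x-x_1|+|y_1-y|\approx t$.'' This is not uniform in $x,y$: if $|y|\approx t$, the critical region contains $x_1,y_1$ in the bulk of the potential, and the decay of $V$ gives nothing. The mechanism that actually produces the exponent $7j/2$ is the factor $\la y\ra^{-1/2}$ in \eqref{eq:VRH}, inherited from $(1+z|y-y_1|)^{-1/2}$ in $\mR_H^\pm$: when $z_0$ lies near the support of $\chi_j$ one necessarily has $t\approx |y|$, so $\la y\ra^{-1/2}\les t^{-1/2}$, and combining this with the window of width $\sqrt{2^{3j}/t}$ in Lemma~\ref{stat phase general} and the amplitude bound $|a|+|\partial_z a|\les 2^j\la y\ra^{-1/2}$ yields $2^{7j/2}|t|^{-1}$; when $t\not\approx|y|$ the phase derivative is bounded below and a single integration by parts gives $2^{2j}/t$. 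Your final bookkeeping paragraph gestures at the powers of $z$ but does not supply this mechanism, so as written the passage from the operator-norm control of $\mR_V^\pm$ to the uniform $2^{7j/2}|t|^{-1}$ bound is not justified.
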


For the  outer resolvents, we will write $\mR_0=\mR_L+\mR_H$ where
\begin{multline}  \label{eqRL}
\mR_L^\pm(z)(x,y)  =  \chi(z|x-y|)\left(\frac{i\alpha\cdot(x-y)}{2\pi|x-y|^2}+\widetilde O_1(z (z |x-y|)^{0-})\right)\\ 
=\chi(z|x-y|)\widetilde O_1(|x-y|^{-1}),
\end{multline}
\begin{multline} \label{eqRH} \mR_H^\pm(z)(x,y) = e^{\pm i z|x-y|}\widetilde w_\pm(z|x-y|), \\  |\partial^k_z[\widetilde w_\pm(z|x-y|)]|    \les z^{1-k} (1+z|x-y|)^{-1/2}.
\end{multline}
The proposition follows from\footnote{Lemma 6.4 in \cite{egd} asserts a bound in the $H^1
 \to BMO$ setting, however the proof yields an $L^1\to L^\infty$ bound for $\mR_H^\pm V \mR_V^\pm V \mR_H^\pm$. } Lemma 6.4 in \cite{egd} and Lemmas~\ref{lem:high2} and \ref{lem:high1} below.  
\begin{lemma}\label{lem:high2}
Under the conditions of Proposition~\ref{prop:2highgamma}, we have
$$\sup_{x,y} \big|I\big(\chi_j(z)  \mR_L^{\pm}V\mR_V^{\pm}   V   \mR_H^{\pm}\big)\big|\les  \min(2^{2j},   2^{7j/2} |t|^{-1}).
$$
\end{lemma}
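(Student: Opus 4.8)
The plan is to estimate the oscillatory integral
$$
I\big(\chi_j(z)\,\mR_L^{\pm}V\mR_V^{\pm}V\mR_H^{\pm}\big)(x,y)
= \int_0^\infty e^{-it\sqrt{z^2+m^2}}\,\frac{z\chi_j(z)}{\sqrt{z^2+m^2}}\,
\big[\mR_L^{\pm}V\mR_V^{\pm}V\mR_H^{\pm}\big](z)(x,y)\,dz
$$
in two ways, matching the two terms in $\min(2^{2j},2^{7j/2}|t|^{-1})$. For the trivial (small $t$) bound one takes absolute values inside; since $\mR_L^\pm(z)(x,y_1)$ has an integrable local singularity $|x-y_1|^{-1}$ and decays like $\langle y_1\rangle^{-\delta/2}$-type weights once paired with $V$, while $\mR_H^\pm(z)(y_2,y)=\widetilde O(z(1+z|y_2-y|)^{-1/2})$, and the middle factor $V\mR_V^\pm V$ is controlled by the limiting absorption principle \eqref{eqn:lap1} on weighted $L^2$, one gets a bound of size $z^2$ on the support $z\approx 2^j$; integrating $z\chi_j(z)$ over an interval of length $\approx 2^j$ produces $2^{2j}$ (the extra powers matching the spatial weights are absorbed because $V$ has $\delta>3$ decay and $\langle H\rangle^{-7/2-}$ is not needed here — this is the low-regularity part of the argument, exactly as in Lemma~6.4 of \cite{egd}).

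For the $|t|^{-1}$ bound I would integrate by parts once in $z$, as in \eqref{eq:ibp}: the phase $\sqrt{z^2+m^2}$ has derivative $z/\sqrt{z^2+m^2}\approx 2^j/\sqrt{2^{2j}+m^2}$, which is bounded below by a constant on $z\approx 2^j$ (for all $j\geq 0$, with the reciprocal bounded by $\langle 2^j\rangle/2^j\lesssim 1$), so dividing by it costs $O(1)$. Thus
$$
I\big(\chi_j(z)\,\mR_L^{\pm}V\mR_V^{\pm}V\mR_H^{\pm}\big)
= \frac{i}{t}\int_0^\infty e^{-it\sqrt{z^2+m^2}}\,
\partial_z\!\Big[\tfrac{\sqrt{z^2+m^2}}{z}\cdot\tfrac{z\chi_j(z)}{\sqrt{z^2+m^2}}\,
\mR_L^{\pm}V\mR_V^{\pm}V\mR_H^{\pm}\Big]\,dz + (\text{boundary}),
$$
and the boundary term vanishes since $\chi_j$ is compactly supported away from $0$. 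It then suffices to show the $z$-derivative of the bracket, integrated against $\chi_j$, is $\lesssim 2^{7j/2}$ in absolute value, uniformly in $x,y$. The derivative lands on one of: $\chi_j$ (gains $2^{-j}$, loses an integration length $2^j$, net neutral but one factor of the size estimate $z^2\approx 2^{2j}$ survives), $\mR_L^\pm$ (its $\partial_z$ is still $\widetilde O(|x-y_1|^{-1})$ times $z^{0-}$ by \eqref{eqRL}, essentially harmless), $\mR_H^\pm$ (the worst: $\partial_z$ hitting the phase $e^{\pm i z|y_2-y|}$ brings down $|y_2-y|\lesssim \langle y_2\rangle\langle y\rangle$, and hitting $\widetilde w_\pm$ keeps size $z(1+z|y_2-y|)^{-1/2}\lesssim 2^{j/2}|y_2-y|^{-1/2}$ by \eqref{eqRH}), or the middle resolvent factor $\mR_V^\pm$ (whose first two $z$-derivatives are controlled on $L^{2,\sigma}\to L^{2,-\sigma}$ by \eqref{eqn:lap1}, costing a factor $\lesssim 1$ in size but requiring weights $\sigma>3/2$). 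Assembling: the Hilbert–Schmidt / weighted-$L^2$ pairing of $\mR_L^\pm V$ (in $L^{2,-\sigma}$, absorbing $|x-y_1|^{-1}$ and a weight $\langle y_1\rangle^{\sigma}$), then $V\mR_V^\pm V$ (bounded $L^{2,\sigma}\to L^{2,-\sigma}$), then $V\mR_H^\pm$ where the polynomial weight $\langle y_2\rangle^{3/2}$ created by $\partial_z$ hitting the phase is swallowed by the $\langle y_2\rangle^{-\delta/2}$ from $V$ with $\delta>3$, and noting $\mR_H^\pm$ itself contributes size $z\cdot 2^{j/2}$ in the worst term, yields an overall bound $\lesssim 2^{2j}\cdot 2^{j}\cdot 2^{j/2}=2^{7j/2}$ after accounting for the $2^{2j}$ from the amplitude and integration. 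Combining the two regimes gives $\min(2^{2j},2^{7j/2}|t|^{-1})$.

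The main obstacle I anticipate is bookkeeping the weights so that everything closes in weighted $L^2$: one must route the local singularity $|x-y_1|^{-1}$ of $\mR_L^\pm$, the growing factor $|y_2-y|\lesssim\langle y_2\rangle\langle y\rangle$ produced when $\partial_z$ strikes the phase of $\mR_H^\pm$, and the weight $\langle x_i\rangle^{\sigma}$, $\sigma>3/2$, demanded by the limiting absorption principle \eqref{eqn:lap1} for the middle factor, all through the $\langle x\rangle^{-\delta/2}$ decay of the $v$'s (equivalently $V$), and confirm that $\delta>3$ is exactly enough — this is why Theorem~\ref{th:main1} assumes $\delta>3$ rather than the $\delta>5$ of Theorem~\ref{th:main}. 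The cleanest implementation is to mimic verbatim the structure of Lemma~6.4 in \cite{egd}, which already handles $\mR_H^\pm V\mR_V^\pm V\mR_H^\pm$ in the $L^1\to L^\infty$ setting; replacing the left outer $\mR_H^\pm$ by $\mR_L^\pm$ only \emph{improves} matters, since $\mR_L^\pm$ is better behaved in $z$ (no phase to differentiate, only the mild $z^{0-}$ correction) — so no new oscillatory-integral phenomena arise, and the proof is a routine, if tedious, adaptation.
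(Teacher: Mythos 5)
Your proposal for the $|t|^{-1}$ bound has a genuine gap precisely at the point you flag as "bookkeeping": the factor brought down when $\partial_z$ hits the phase $e^{\pm i z|y_1-y|}$ of $\mR_H^{\pm}$ is $|y_1-y|$, and the $\la y\ra$ part of your bound $|y_1-y|\les \la y_1\ra\la y\ra$ cannot be absorbed by anything — $y$ is the free outer variable and the estimate must be uniform in $x,y$. A single integration by parts against $e^{-it\sqrt{z^2+m^2}}$ therefore yields at best $\la y\ra\,2^{7j/2}|t|^{-1}$, which is worthless exactly in the dangerous regime $|y|\approx t$ where the oscillation of $\mR_H^\pm$ can resonate with the main phase. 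This is not a technicality: it is the reason the paper does \emph{not} integrate by parts naively but instead absorbs $e^{\mp iz|y|}$ into the phase, writing the integral with phase $\phi(z)=2^{3j}\bigl(\sqrt{z^2+m^2}-z|y|/t\bigr)$ and amplitude containing $\mR_H^\pm(z)(y_1,y)e^{\mp iz|y|}$, whose $z$-derivative is only $O\bigl(z^{1/2}\la y_1\ra|y-y_1|^{-1/2}\bigr)$ since $\bigl||y-y_1|-|y|\bigr|\les\la y_1\ra$ is absorbable by $V(y_1)$. One then applies the stationary-phase bound of Lemma~\ref{stat phase general} around $z_0=m|y|/\sqrt{t^2-|y|^2}$ when $t\approx|y|$ (the rescaling by $2^{3j}$ is what guarantees $\phi''\geq1$ on $z\approx 2^j$, and is the very reason for the dyadic localization, which your argument never uses), and non-stationary phase when $t\not\approx|y|$, obtaining $2^{7j/2}/t$ and $2^{2j}/t$ respectively. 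So "no new oscillatory-integral phenomena arise" is not correct for your mechanism; deferring to Lemma~6.4 of \cite{egd} is consistent with the paper's strategy, but the argument you actually sketch would not close.

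A secondary, fixable issue: for the non-time-decay bound $2^{2j}$ you pair $\mR_L^\pm V$ directly with the limiting absorption principle, but $\mR_L^\pm(z)(x,x_1)V(x_1)\les|x-x_1|^{-1}\la x_1\ra^{-3-}$ is not locally square integrable in two dimensions, so it is not an admissible $L^{2,\sigma}$ factor. The paper first iterates the resolvent identity, $\mR_L^{\pm}V\mR_V^{\pm}V\mR_H^{\pm}=\mR_L^{\pm}V\mR_0^{\pm}V\mR_H^{\pm}-\mR_L^{\pm}V\mR_0^{\pm}V\mR_V^{\pm}V\mR_H^{\pm}$, so that the left factor becomes $\mR_L^\pm V\mR_0^\pm V$, which satisfies \eqref{eq:RLVR0V} and lies in weighted $L^2$ uniformly in $x$; together with \eqref{eq:VRH} and \eqref{eqn:lap} this gives the $z^2$ amplitude and hence $2^{2j}$. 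You should incorporate both of these steps before the argument can be considered complete.
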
 
\begin{proof} Using the resolvent identity, we write
\be\label{eq:high resid}
 \mR_L^{\pm}V\mR_V^{\pm}   V   \mR_H^{\pm}=\mR_L^{\pm}V\mR_0^{\pm}   V   \mR_H^{\pm}-\mR_L^{\pm}V\mR_0^{\pm} V\mR_V^{\pm}  V   \mR_H^{\pm}.
 \ee
 To bound the second summand without the time decay, we use  a limiting absorption principle for the perturbed resolvent operator of the form:
\be\label{eqn:lap}
\sup_{|\lambda|>\lambda_0}\|\partial_\lambda^k \mR_V^\pm (\lambda)\|_{L^{2,\sigma}\to L^{2,-\sigma}} \les 1,
\qquad \sigma > \f12 +k, \,\,\,\,k=0,1,... 
\ee  
holds for any $\lambda_0>m$.  This was proved in  \cite{egg} for $k=0$; the case $k>0$ follows from this and the resolvent identity.  We note that by equations \eqref{eqRL} and \eqref{eqRH}, we may write the resolvent in the middle as
\begin{align}\label{eq:R0 high2}
	\mR_0^\pm(z)(x,y)=\frac{i\alpha\cdot(x-y)}{2\pi|x-y|^2}+ \widetilde O_1\big(z^{1/2}(|x-y|^{-1/2}+|x-y|^{1/2})\big).
\end{align}
Then,
\begin{multline*}
	\int_{z\approx 2^j} \frac{z}{\sqrt{z^2+m^2}} \| \mathcal R_L^{\pm}V\mathcal R_0^{\pm}V\|_{L^{2,\sigma}} \|\mR_V^\pm \|_{L^{2,\sigma}\to L^{2,-\sigma}} \| V\mathcal R_H^{\pm}\|_{L^{2,\sigma}}\, dz\\
	\les 	\int_{z\approx 2^j} \frac{z^{2}}{\sqrt{z^2+m^2}}\, dz \les 2^{2j}. 
\end{multline*}
Here we use \eqref{eqRL} and \eqref{eq:R0 high2} to see
%
$$
	|\mR_L^\pm V\mR_0^\pm V|\les \int_{\R^2} \frac{z^{\f12}}{|x-x_1|}|V(x_1)|(|x_1-x_2|^{-1}+|x_1-x_2|^{\f12})|V(x_2)| \, dx_1.
$$
Therefore, by Lemma~\ref{lem:potential}  
\be\label{eq:RLVR0V}
\big\|\mR_L^\pm V\mR_0^\pm V\big\|_{L^{2,\f12+}_{x_2}}\les z^{\f12}
\ee
uniformly in $x$.  Similarly, for $V\mR_H^\pm$   the bound in \eqref{eqRH} implies that 
\be\label{eq:VRH}
\|V(y_1)\mR_H^\pm(z)(y_1,y)\|_{L^{2,\sigma}_{y_1}}\les z^{1/2}\la y\ra^{-1/2}.
\ee

  We now turn to the time decay.  We employ the stationary phase bound in Lemma~\ref{stat phase general} below by writing 
$$
 I(\mR_L^{\pm}V\mR_0^{\pm} V\mR_V^{\pm}  V   \mR_H^{\pm}) =	 \int_0^\infty e^{-i 2^{-3j} t \phi(z)}   a(z,x,y) dz, $$
where
$$
\phi(z)= 2^{3j}\left(\sqrt{z^2+m^2}- z |y| /t\right),
$$
$$
	a(z,x,y)=\frac{z \chi_j(z)}{\sqrt{z^2+m^2}}[\mR_L^\pm V \mR_0^\pm V \mR_V^\pm V \mR_H^\pm](z)(x,y) e^{\mp iz|y|}  
$$
We choose $\phi(z)$ in this way so that the lower bound $1\leq \phi''(z)$, which is needed to apply Lemma~\ref{stat phase general},  holds on the support of $a(z,x,y)$.    It is also this stationary phase bound that necessitated our restriction to dyadic energy levels.
Note that the bound in \eqref{eqRH} implies that 
$$
\partial_z \big[\mR_H^\pm (z)(y_1,y) e^{\mp iz|y|}\big] = O(z^{1/2} \la y_1\ra |y-y_1|^{-1/2}).
$$
Using this, \eqref{eq:RLVR0V},  \eqref{eq:VRH}, and a similar bound for  $\partial_z(\mR_L^\pm V\mR_0^\pm V)$, we obtain
$$
|a(z,x,y)|+|\partial_z a(z,x,y)|\les 2^j \chi_j(z) \la y\ra^{-1/2}.
$$ 
By Lemma~\ref{stat phase}, we estimate the integral above by
$$
\int_{|z-z_0|<\sqrt{2^{3j}/t }} |a(z)|\, dz 
    		+t^{-1} 2^{3j} \int_{|z-z_0|>\sqrt{2^{3j}/t }} \bigg( \frac{|a(z)|}{|z-z_0|^2}+
    		\frac{|a'(z)|}{|z-z_0|}\bigg)\, dz,
$$
where $z_0=m\frac{ |y|}{\sqrt{t^2- |y|^2}}$. In the case when $z_0$ is in a small neighborhood of the support of $a(z,x,y)$ we must have  $t\approx |y|$. Therefore, in this case, we have the bound
$$
2^j  \la y\ra^{-1/2} \left(\sqrt{2^{3j}/t }+t^{-1} 2^{3j} \frac{2^j}{\sqrt{2^{3j}/t }}\right)
\les 2^{7j/2}/t.
$$
In the case $t\not \approx |y|$, we have 
$$
\left|\partial_z\left(\sqrt{z^2+m^2}- z |y| /t\right)\right| \gtrsim 1.
$$
An integration by parts together with the bounds on $a(z,x,y)$ imply that the integral is bounded by
$2^{2j}/t$.  
  
The proof for the first summand in \eqref{eq:high resid} is  similar. 
 \end{proof} 
\begin{lemma}\label{lem:high1}
Under the conditions of Proposition~\ref{prop:2highgamma}, we have
$$\sup_{x,y}\big|I\big(\chi_j(z)  \mR_L^{\pm}V\mR_V^{\pm}   V   \mR_L^{\pm}\big)\big|\les  \min(2^{2j},   2^{7j/2} |t|^{-1}).
$$
\end{lemma}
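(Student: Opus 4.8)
The plan is to follow the proof of Lemma~\ref{lem:high2} essentially verbatim, with the right-hand outer factor $\mR_H^\pm$ replaced by $\mR_L^\pm$; the one genuinely new ingredient is a weighted $L^2$ bound for $V\mR_L^\pm$. First I would apply the resolvent identity to the perturbed resolvent in the middle,
\[
\mR_L^{\pm}V\mR_V^{\pm} V \mR_L^{\pm}=\mR_L^{\pm}V\mR_0^{\pm} V \mR_L^{\pm}-\mR_L^{\pm}V\mR_0^{\pm} V\mR_V^{\pm} V \mR_L^{\pm},
\]
and treat the two summands separately. The key point is that on the support of $\chi(z|y-y_1|)$ one has $|y-y_1|\les z^{-1}$, so that
\[
|V(y_1)\,\mR_L^\pm(z)(y_1,y)|\les \la y_1\ra^{-\delta}\,\chi(z|y-y_1|)\,|y-y_1|^{-1}\les z^{\f12}\,\la y_1\ra^{-\delta}\,|y-y_1|^{-\f12};
\]
the gain of half a power in $|y-y_1|$ restores local square-integrability in $\R^2$, and splitting the $y_1$-integral according to whether $|y-y_1|\le \la y\ra$ or not yields the exact analogue of \eqref{eq:VRH}, namely $\big\|V(\cdot)\,\mR_L^\pm(z)(\cdot,y)\big\|_{L^{2,\sigma}}\les z^{\f12}\la y\ra^{-\f12}$ for any $\sigma<\delta-1$ (in particular for some $\sigma>\f32$, since $\delta>3$).

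With this in hand the non-time-decay bound $\les 2^{2j}$ follows from the pairing argument of Lemma~\ref{lem:high2}: for the second summand one pairs the two factors adjacent to $\mR_V^\pm$, using \eqref{eq:RLVR0V} for $\mR_L^\pm V\mR_0^\pm V$, the limiting absorption principle \eqref{eqn:lap} for $\mR_V^\pm$ (with $k=0$, and with $k=1$ for the $z$-derivative of the kernel, both valid since $\delta>3$ provides enough spatial weight), and the new bound above for $V\mR_L^\pm$; this gives $|[\mR_L^{\pm}V\mR_0^{\pm} V\mR_V^{\pm} V \mR_L^{\pm}](z)(x,y)|\les z$ together with the same bound for its $z$-derivative, uniformly in $x,y$, and integrating $\f{z\chi_j(z)}{\sqrt{z^2+m^2}}$ against it over $z\approx 2^j$ produces $\les 2^{2j}$. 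The first summand $\mR_L^\pm V\mR_0^\pm V\mR_L^\pm$ is handled identically, pairing the two copies of $\mR_L^\pm V$ across the free resolvent $\mR_0^\pm$ (which is bounded $L^{2,\sigma}\to L^{2,-\sigma}$ uniformly in $z$, with no decay but none needed), and is in fact simpler as no perturbed resolvent occurs.

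For the $|t|^{-1}$ decay I would integrate by parts once in $z$. The structural observation that makes this painless is that $\mR_L^\pm$, unlike $\mR_H^\pm$, carries no oscillatory factor, so the phase of the $z$-integral is just $e^{-it\sqrt{z^2+m^2}}$, which has no stationary point for $z>0$; hence Lemma~\ref{stat phase general} is not needed and an honest integration by parts suffices. After rescaling $z\mapsto 2^{-j}z$ exactly as in Lemma~\ref{lem:high2}, one has $\partial_z\sqrt{z^2+m^2}=z/\sqrt{z^2+m^2}$ bounded below on the support of $\chi_j$ (by a constant depending on the fixed mass $m>0$) and $\partial_z^2\sqrt{z^2+m^2}=m^2(z^2+m^2)^{-3/2}\les1$; feeding in the bounds $\les z$ on the kernel and its $z$-derivative gives a contribution $\les |t|^{-1}2^{2j}\les|t|^{-1}2^{7j/2}$, and taking the minimum of the two estimates completes the proof. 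The main obstacle — and the only substantive departure from Lemma~\ref{lem:high2} — is the borderline failure of local square-integrability of the $|y-y_1|^{-1}$ diagonal singularity of $\mR_L^\pm$ in two dimensions; the support-condition trick above is precisely what neutralizes it and lets the entire argument of Lemma~\ref{lem:high2} carry over.
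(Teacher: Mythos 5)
Your reduction to a single application of the resolvent identity breaks down at the very step you flag as the ``one genuinely new ingredient.'' The pointwise inequality
$$\chi(z|y-y_1|)\,|y-y_1|^{-1}\les z^{\f12}|y-y_1|^{-\f12}$$
is false: it is equivalent to $z|y-y_1|\gtrsim 1$, whereas the cut-off $\chi(z|y-y_1|)$ is supported precisely where $z|y-y_1|\les 1$. Since $z\approx 2^j\geq 1$ here, the support condition confines $|y-y_1|$ to a small neighborhood of the diagonal --- exactly where the $|y-y_1|^{-1}$ singularity lives --- and $|y-y_1|^{-2}$ fails (logarithmically) to be locally integrable in $\R^2$. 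Consequently $V(\cdot)\,\mR_L^\pm(z)(\cdot,y)$ is not in $L^{2}$ for generic $y$, the claimed analogue of \eqref{eq:VRH} does not hold, and the limiting absorption principle cannot be applied to the middle $\mR_V^\pm$ in your second summand $\mR_L^{\pm}V\mR_0^{\pm}V\mR_V^{\pm}V\mR_L^{\pm}$. (The same issue makes the $L^2$ pairing for your first summand illegitimate as stated, though that term could be salvaged by estimating the iterated integral pointwise.)

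The fix, which is what the paper does, is to iterate the resolvent identity once more so that each outer $\mR_L^\pm$ is buffered by a free resolvent before the perturbed one appears:
$$\mR_L^{\pm}V\mR_V^{\pm}V\mR_L^{\pm}=\mR_L^{\pm}V\mR_0^{\pm}V\mR_L^{\pm}-\mR_L^{\pm}V\mR_0^{\pm}V\mR_0^{\pm}V\mR_L^{\pm}+\mR_L^{\pm}V\mR_0^{\pm}V\mR_V^{\pm}V\mR_0^{\pm}V\mR_L^{\pm}.$$
The factors adjacent to $\mR_V^\pm$ are then $\mR_L^\pm V\mR_0^\pm V$ and its mirror image, which do lie in weighted $L^2$ uniformly in the outer variable by \eqref{eq:RLVR0V}: the convolution with $V\mR_0^\pm V$ smooths the non-square-integrable $|x-x_1|^{-1}$ singularity via Lemma~\ref{lem:potential}, and the $z^{\f12}$ gain comes from the expansion \eqref{eq:R0 high2} of the \emph{middle} free resolvent, not from $\mR_L^\pm$ itself. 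With that substitution your remaining steps --- the $2^{2j}$ bound without time decay, and a single plain integration by parts (no stationary phase needed, since $\mR_L^\pm$ carries no oscillatory factor) for the $|t|^{-1}$ bound --- go through essentially as you describe.
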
 
The proof of this lemma is similar but simpler since $\mR_L^\pm$ has no oscillatory part. By the  resolvent identity we write 
$$
 \mR_L^{\pm}V\mR_V^{\pm}   V   \mR_L^{\pm}=\mR_L^{\pm}V\mR_0^{\pm}   V   \mR_L^{\pm}-\mR_L^{\pm}V\mR_0^{\pm} V\mR_0^{\pm}   V   \mR_L^{\pm}+\mR_L^{\pm}V\mR_0^{\pm}V\mR_V^{\pm} V\mR_0^{\pm}  V   \mR_L^{\pm}.
 $$
The bound \eqref{eq:RLVR0V} and a similar one for the $z$ derivative  suffice to control each of these terms via an integration by parts.

\section{Weighted dispersive decay estimates} \label{sec:weighted}
In this section we show the Dirac evolution can decay faster in time as an operator between weighted spaces. As in Section~\ref{sec:nonweight}, we divide the proof  into two subsections. In the first subsection we show the statement of Theorem~\ref{th:main2} for small energies, in the support of $\chi(z)$.  In the second subsection we show the statement holds for large energies, in the support of the cut-off $\widetilde{\chi}(z)=1-\chi(z)$ without the need to restrict to dyadic energy levels.

\subsection{Small energy  weighted estimates}
In this section we will show that 
\begin{theorem}\label{thm:Sec4}
	Let $|V(x)| \les \la x \ra^{-5-}$. Then, we have
	for $t>2$
	\be\label{weighteddecay}
		 \bigg| \int_0^\infty e^{-it\sqrt{z^2+m^2}} \frac{ z\chi(z)}{\sqrt{z^2+m^2}}
		 [\mR_V^+(z)-\mR_V^-(z)](x,y) dz \bigg|
		 \les \f{w(x)w(y)}{t\log^2(t)}+
		\f{\la x\ra^{\f32} \la y\ra^{\f32}}{t^{1+}}
	\ee
	where $w(x) = 1+ \log^{+}|x| $.
	\end{theorem}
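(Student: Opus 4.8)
The plan is to run the architecture of the proof of Theorem~\ref{thm:mainineq}, but to push every estimate one step further, extracting either the gain $\log^{-2}t$ (at the price of the weights $w(x)w(y)$) or a genuine $t^{-1-}$ decay (at the price of the polynomial weights $\la x\ra^{3/2}\la y\ra^{3/2}$). As there, I would start from the symmetric resolvent identity \eqref{symmresid}, separate the free Born term $\mR_0^+-\mR_0^-$ from the perturbed piece $\mR_0^\pm v^*M_\pm^{-1}v\mR_0^\pm$, decompose each outer free resolvent as $\mR_0^\pm=\mR_1+\mathcal R_{L,2}^\pm+\mathcal R_H^\pm$, and insert the expansion $M_\pm^{-1}(z)=h^\pm(z)^{-1}S+QD_0Q+E^\pm(z)$ of Lemma~\ref{lem:Minverseregular}, now using the new second-derivative bounds of Lemmas~\ref{lem:R0exp}, \ref{lem:M_exp} and \ref{lem:Minverseregular}. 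For each resulting $z$-dependent kernel $\Gamma$ I would integrate by parts once, as in \eqref{eq:ibp}, producing the boundary term $\f{e^{-itm}}{it}\Gamma|_{z=0}$ and the oscillatory remainder $\f1{it}\int_0^\infty e^{-it\sqrt{z^2+m^2}}\partial_z[\chi(z)\Gamma(z)]\,dz$, and estimate each against the right-hand side of \eqref{weighteddecay}. For the free Born term, Remark~\ref{rmk:rdif} gives $[\mR_0^+-\mR_0^-](z)(x,y)=imM_{11}+E_1(z)(x,y)$; after the substitution $u=\sqrt{z^2+m^2}$ and one integration by parts (the cut-off derivative being supported away from $u=m$) the constant $imM_{11}$ piece produces exactly $\f{m\,e^{-itm}}{t}M_{11}$ up to $O(t^{-N})$, while the $E_1$ piece is $\les\f{\la x\ra^{3/2}\la y\ra^{3/2}}{t^{1+}}$ by a second integration by parts combined with cutting the oscillation at $z\sim1/t$, using $|E_1|\les z^{1/2}\la r\ra^{1/2}$ and $|\partial_z^2E_1|\les z^{-1/2}\la r\ra^{3/2}$.

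The perturbed piece is organized exactly as in Section~\ref{sec:nonweight} via the iterated identities \eqref{FIT}, \eqref{SIT}, and split according to which factor of $M_\pm^{-1}$ appears. The essential new point is the \emph{cancellation of boundary terms}. After the $\pm$-difference, every term carrying $QD_0Q$ (or, in the resonance case, $A$) or the error $E^\pm$ has vanishing $z\to0$ limit: the orthogonality $PQ=0$ --- which is how $M_{11}v^*Q=QvM_{11}=0$ acts once inside $v(\cdot)v^*$ --- kills the logarithmically growing $\mathbbm g^\pm(z)$-part of the adjacent free resolvents (resp. $E_0^\pm,E^\pm$ vanish at $z=0$), so what remains is independent of $\pm$ at $z=0$. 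The only surviving boundary contribution comes from the $h^\pm(z)^{-1}S$-terms: here the logarithmic divergences of $\mR_0^\pm(z)$ and of $1/h^\pm(z)$ cancel in the $\pm$-difference, leaving a \emph{finite} limit generated by the constants $g^+(z)-g^-(z)=\f i2$ and $\mathrm{Im}\,\mathbbm g^\pm(z)=\pm\f m2\|(a,c)\|_2^2$. I would then verify that, summed over all such terms, this finite limit cancels the $\f{m\,e^{-itm}}{t}M_{11}$ produced by the free Born term --- this is the pairwise cancellation referred to in the introduction, and it reflects the fact that the perturbed operator, being regular (or only s-wave resonant), carries the same threshold resonance structure as the free Dirac operator, exactly as in \cite{eg3,ebru}.

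With the boundary terms disposed of, only the oscillatory remainders are left. For the $h^\pm(z)^{-1}S$-terms, the $\pm$-difference of the relevant factor $\mR_0^\pm/h^\pm$ is $\widetilde O(|\log z|^{-2})$, with its $\mathcal G_0=-i\alpha\cdot\nabla G_0+2mG_0I_1$ component carrying the logarithmic spatial growth that produces $w(x)$ and $w(y)$ (cf. Remark~\ref{rmk:RL2}); its $z$-derivative has leading behaviour $z^{-1}|\log z|^{-3}$ times these weights, and the oscillatory-integral estimate to be collected in Section~\ref{sec:tech} --- the range $z<1/t$ integrated absolutely, using $\int_0^{1/t}\f{dz}{z|\log z|^3}\sim\log^{-2}t$, and the range $z>1/t$ handled by a further integration by parts --- converts the $\f1t$ prefactor into $\f{w(x)w(y)}{t\log^2 t}$. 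For the $QD_0Q$-, $A$- and $E^\pm$-terms, after the $\pm$-difference the integrand gains a power $z^\gamma$ for some $\gamma\in(0,\f32)$ --- via Lemma~\ref{lem:R0exp} with $k$ close to $\f32$ and via Lemma~\ref{lem:Minverseregular} --- so a second integration by parts (this is precisely where the new $\partial_z^2$ bounds enter) yields $\les\f{\la x\ra^{3/2}\la y\ra^{3/2}}{t^{1+}}$. Convergence of all spatial integrals follows from Lemma~\ref{lem:potential}, the pointwise bounds \eqref{eq:R1 useful1} for $\mR_1$ and Remark~\ref{rmk:RL2} for $\mathcal R_{L,2}^\pm$, and the absolute boundedness of $S$, $QD_0Q$ (and $A$), exactly as in Lemmas~\ref{lem:gamma j=1}--\ref{lem:gamma j=6}. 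Finally, any term carrying $\mathcal R_H^\pm$ (or $\mR_4^\pm$) on either side bears the oscillation $e^{\pm iz|x-y|}$ together with the decay $(1+z|x-y|)^{-1/2}$, and is controlled by the stationary-phase estimates of Section~\ref{sec:tech} as in Lemma~\ref{lem:r1rh}, which in this weighted setting are strong enough to give a contribution $\les\f{\la x\ra^{3/2}\la y\ra^{3/2}}{t^{1+}}$.

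I expect the main obstacle to be the boundary-term bookkeeping of the second paragraph: one must compute the precise $z\to0$ limits of all operator products entering the $h^\pm(z)^{-1}S$-terms --- using $vM_{11}v^*=\|(a,c)\|_2^2P$, the explicit block form of $S$, and the relation between $h^\pm$ and $g^\pm$ in Lemma~\ref{lem:Minverseregular} --- and check that their sum cancels $\f{m\,e^{-itm}}{t}M_{11}$ exactly; absent this cancellation one is left with an unweighted $\f1t$ term and the theorem fails. The secondary technical core, the oscillatory-integral lemma yielding the $\log^{-2}t$ gain, is essentially the two-dimensional Schr\"odinger mechanism of \cite{eg3,ebru}; the remaining estimates are routine weighted variants of those in Section~\ref{sec:nonweight}.
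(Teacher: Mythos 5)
Your architecture is the right one, and the central idea --- all order-$t^{-1}$ boundary terms produced by integrating by parts once cancel in pairs, $\widetilde O_2(\log^{-2}z)$ errors give $(t\log^2t)^{-1}$ with logarithmic weights, and $\widetilde O_2(z^{1/2-})$ errors give $t^{-1-}$ with polynomial weights --- is exactly what the paper does. But two steps, as you state them, would fail. First, your disposal of the oscillatory pieces $\mR_H^\pm$ (and $\mR_4^\pm$) by appealing to the stationary-phase machinery of Lemma~\ref{stat phase}/Lemma~\ref{lem:r1rh} cannot give the claimed $\la x\ra^{3/2}\la y\ra^{3/2}t^{-1-}$: those lemmas produce only $\la t\ra^{-1}$, and no spatial weight improves them without redoing the oscillatory analysis (one would have to treat the regimes $|x-y|\gtrsim t$ and $|x-y|\ll t$ separately, which you do not indicate). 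The paper sidesteps this entirely by \emph{not} splitting into small and large $z|x-y|$ at low energy: it writes $\mR_0^\pm=\mR_6+\mR_7^\pm$ with $\mR_6=\frac{i\alpha\cdot(x-y)}{2\pi|x-y|^2}$ independent of $z$ and $\mR_7^\pm=2mg^\pm(z)M_{11}+2mG_0I_1+E_0^\pm$, where the error bounds of Lemma~\ref{lem:R0exp} (including the new $\partial_z^2$ bound $\les z^{-1/2}(|x-y|^{3/2}+\log^-|x-y|)$) hold for all $z|x-y|$, the $|x-y|$-growth being absorbed by the allowed weights; then two integrations by parts via Lemma~\ref{lem:ibp} suffice and no stationary phase is needed. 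Relatedly, your proposed cut of the $z$-integration at $z\sim 1/t$ does not close: for the $z^{-1/2}$-type errors the tail term is $t^{-2}\int_{1/t}z^{-5/2}dz\sim t^{-1/2}$, and for the logarithmic terms it is $\sim\log^{-3}t$ with no $t^{-1}$ gain; the correct balancing point is $z\sim t^{-1/2}$, as in Lemma~\ref{lem:ibp} and Corollary~\ref{cor:logt}.

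Second, your boundary-term bookkeeping is not merely unfinished, it enumerates the wrong set of surviving terms. Once you iterate \eqref{FIT}--\eqref{SIT}, the identities generate pure Born terms carrying no factor of $M_\pm^{-1}$ at all, e.g. $\mR_1V[\mR_0^+-\mR_0^-]V\mR_1$ and $\mR_1V(\mathcal R_{L,2}^+-\mathcal R_{L,2}^-)$, whose $z\to0$ limits are the nonzero operators $im(\cdots)M_{11}(\cdots)$; so it is not true that the only surviving boundary contributions are the $h^\pm(z)^{-1}S$ ones plus the free Born term. In the paper's accounting (Lemmas~\ref{lem:free}, \ref{fromeg3}, \ref{lem:nw12} and the following lemma) the pairing is different from the one you propose: the free term's boundary value $-\frac{m e^{-itm}}{t}M_{11}$ cancels against the boundary value of $\mR_7^\pm v^*M_\pm^{-1}v\mR_7^\pm$ (computed from \eqref{eq:hdecays} and $M_{11}v^*SvM_{11}=\|(a,c)\|_2^2M_{11}$), while inside the mixed terms $\mR_6v^*M_\pm^{-1}v\mR_7^\pm$ and $\mR_6v^*M_\pm^{-1}v\mR_6$ the cancellation is internal, between the iterated Born piece (e.g. $\mR_6V(\mR_7^+-\mR_7^-)$, boundary value $im\,\mR_6VM_{11}$) and the $S/h^\pm$ piece of the remainder. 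Your claim that the $S/h$ contributions, summed, cancel the free Born term alone would leave these Born-iterate boundary terms uncancelled, and absent the correct pairing the unweighted $t^{-1}$ contribution survives and the theorem is not proved. The fix is precisely the paper's choice of the $z$-independent $\mR_6$ (so that $\mR_6VM_{11}$-type boundary terms can be computed exactly via \eqref{eq:bdyterms}) together with the identities $vM_{11}v^*=\|(a,c)\|_2^2P$ and $M_{11}v^*Q=QvM_{11}=0$, which you do have in hand.
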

Using the symmetric resolvent identity as in Section~\ref{sec:nonweight}, we have
\begin{align} \label{Rv+-}
 \mR^{+}_V(z)-\mR^{-}_V(z) = [\mR^{+}_0 -\mR^{-}_0] - [\mR^{+}_0v^*M_{+}^{-1} v\mR^{+}_0 -\mR^{-}_0v^*M_{-}^{-1}v \mR^{-}_0] .
 \end{align}
We start with the contribution of the free resolvent. To establish the time decay we employ the following oscillatory integral bounds.

\begin{lemma} \label{lem:ibp} Let $\mathcal{E}(z) $ be supported on the neighborhood $(0,z_0)$ for some $z_0\ll 1$. Then,  for any $t>2$ we have
\begin{multline}
	\Big|  \int_0^\infty e^{-it\sqrt{z^2+m^2}} \frac{z}{\sqrt{z^2+m^2}} \mathcal{E}(z) dz -\frac{i e^{-imt}}{t}\mathcal{E}(0) \Big|\\
	\les {\f 1 t} \int_0^{t^{-1/2}} | \mathcal{E}^{\prime}(z) | dz  
	+\frac{1}{t^2} \int_{t^{-1/2}}^{\infty} \Big| \frac{\mathcal{E}^{\prime}(z)}{z^2} +\frac{ \mathcal{E}^{\prime \prime} (z)}{z}\Big| dz.
 \end{multline}
\end{lemma}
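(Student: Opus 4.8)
The plan is a two–step integration by parts organized around the fact that the phase $\Phi(z):=\sqrt{z^2+m^2}$ has its only — and degenerate — critical point at the left endpoint $z=0$: no time decay can be extracted from a neighborhood of scale $t^{-1/2}$ of that point, while away from it two integrations by parts are available.

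\emph{Step 1 (peel off the main term).} The amplitude $z/\sqrt{z^2+m^2}$ is exactly $\Phi'(z)$, so
\[
e^{-it\Phi(z)}\,\frac{z}{\sqrt{z^2+m^2}}=-\frac{1}{it}\,\frac{d}{dz}\,e^{-it\Phi(z)}.
\]
Since $\mathcal E$ is compactly supported in $(0,z_0)$, a single integration by parts in $z$ kills the boundary term at $z=+\infty$; the boundary term at $z=0$ is the stated main term $\tfrac{i e^{-imt}}{t}\mathcal E(0)$, and there remains the error $-\tfrac{1}{it}\int_0^\infty e^{-it\Phi(z)}\mathcal E'(z)\,dz$. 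Hence it suffices to prove
\[
\Big|\int_0^\infty e^{-it\Phi(z)}\,\mathcal E'(z)\,dz\Big|\;\lesssim\;\int_0^{t^{-1/2}}|\mathcal E'(z)|\,dz+\frac1t\int_{t^{-1/2}}^\infty\Big|\frac{\mathcal E'(z)}{z^2}+\frac{\mathcal E''(z)}{z}\Big|\,dz ,
\]
and multiplying by $1/t$ yields the claim. (We may assume $t^{-1/2}<z_0$; otherwise $\mathcal E'$ is supported in $(0,t^{-1/2})$, the tail integral is empty, and the bound is immediate.)

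\emph{Step 2 (split at $z=t^{-1/2}$ and integrate by parts once more).} On $(0,t^{-1/2})$ I bound $|e^{-it\Phi(z)}|\le 1$ to get $\int_0^{t^{-1/2}}|\mathcal E'|$, the first term. On $(t^{-1/2},\infty)$ I integrate by parts against $e^{-it\Phi}=\tfrac{-1}{it\,\Phi'}\tfrac{d}{dz}e^{-it\Phi}$, which is legitimate because $\Phi'(z)=z/\sqrt{z^2+m^2}\ge\Phi'(t^{-1/2})\gtrsim t^{-1/2}>0$ there. Using the elementary estimates $\Phi'(z)\asymp z$ and $\Phi''(z)=m^2(z^2+m^2)^{-3/2}\asymp 1$ for $0<z\ll1$ (with constants depending only on the fixed $m>0$), the interior term of this second integration by parts is
\[
\frac1t\int_{t^{-1/2}}^\infty\Big|\Big(\frac{\mathcal E'(z)}{\Phi'(z)}\Big)'\Big|\,dz=\frac1t\int_{t^{-1/2}}^\infty\Big|\frac{\mathcal E''(z)}{\Phi'(z)}-\frac{\mathcal E'(z)\Phi''(z)}{\Phi'(z)^2}\Big|\,dz\;\lesssim\;\frac1t\int_{t^{-1/2}}^\infty\Big|\frac{\mathcal E'(z)}{z^2}+\frac{\mathcal E''(z)}{z}\Big|\,dz ,
\]
which is the second term; what is left is the boundary contribution at $z=t^{-1/2}$.

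\emph{Step 3 (absorb the interface boundary term — the only delicate point).} This endpoint term has size $\dfrac{|\mathcal E'(t^{-1/2})|}{t\,\Phi'(t^{-1/2})}\asymp t^{-1/2}|\mathcal E'(t^{-1/2})|$, and the plan is to fold it into the tail integral already present on the right. By the fundamental theorem of calculus, $|\mathcal E'(t^{-1/2})|\le|\mathcal E'(z)|+\int_{t^{-1/2}}^{2t^{-1/2}}|\mathcal E''(s)|\,ds$ for every $z\in[t^{-1/2},2t^{-1/2}]$; averaging in $z$ over this dyadic interval and using that $z^{-2}\asymp t$ and $z^{-1}\asymp t^{1/2}$ there gives
\[
t^{-1/2}|\mathcal E'(t^{-1/2})|\;\lesssim\;\int_{t^{-1/2}}^{2t^{-1/2}}|\mathcal E'(z)|\,dz+t^{-1/2}\!\int_{t^{-1/2}}^{2t^{-1/2}}\!|\mathcal E''(s)|\,ds\;\lesssim\;\frac1t\int_{t^{-1/2}}^{\infty}\Big(\frac{|\mathcal E'(z)|}{z^2}+\frac{|\mathcal E''(z)|}{z}\Big)\,dz ,
\]
which is controlled by the second term. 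Collecting the three pieces completes the proof. The only substantive point is Step 3 — checking that the boundary term created at the cut $z=t^{-1/2}$ is dominated by the $z^{-2},\,z^{-1}$–weighted tail already on the right-hand side; everything else is two integrations by parts together with elementary bounds on $\Phi'$ and $\Phi''$.
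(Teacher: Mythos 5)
Your proof is correct and follows essentially the same route as the paper: one integration by parts using $\tfrac{z}{\sqrt{z^2+m^2}}e^{-it\sqrt{z^2+m^2}}=c\,t^{-1}\partial_z e^{-it\sqrt{z^2+m^2}}$ to produce the $t^{-1}\mathcal{E}(0)$ term, a split at $z=t^{-1/2}$ with the trivial bound below and a second integration by parts above, yielding the two stated integrals. The only (immaterial) difference is how the interface boundary term at $z=t^{-1/2}$ is absorbed: the paper invokes the support condition and the fundamental theorem of calculus to write $\tfrac{\sqrt{z^2+m^2}}{z}\mathcal{E}'(z)\big|_{z=t^{-1/2}}$ as the tail integral of $\partial_z\{\tfrac{\sqrt{z^2+m^2}}{z}\mathcal{E}'(z)\}$, while you dominate it by averaging over the dyadic interval $[t^{-1/2},2t^{-1/2}]$ — both give the same weighted tail bound.
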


\begin{proof}
Using the identity $\frac{z}{\sqrt{z^2+m^2}} e^{-it\sqrt{z^2+m^2}}=-\frac{1}{it} \partial_z (e^{-it\sqrt{z^2+m^2}})$ to integrate by parts we have  
\begin{multline*}
\Big|\int_0^\infty e^{-it\sqrt{z^2+m^2}} \frac{z}{\sqrt{z^2+m^2}} \mathcal{E}(z) dz- \frac{ie^{-imt}}{t}\mathcal{E}(0) \Big| \les \\
   {\f 1 t} \int_0^{t^{-1/2}} | \mathcal{E}^\prime (z) |dz +{\f 1 t}\Big| \int_{t^{-1/2}}^{\infty} e^{-it\sqrt{z^2+m^2}}   \mathcal{E}^\prime (z) dz\Big|.
\end{multline*}
Applying another integration by parts to the last term on the right side 
\begin{multline*}
\Big|  \int_{t^{-1/2}}^{\infty} e^{-it\sqrt{z^2+m^2}}   \mathcal{E}^\prime (z) dz \Big| \\ \les \frac{1}{t^2} \Big( \frac{ \sqrt{z^2 + m^2}} {z} |\mathcal{E}^\prime(z)| \Big) \Big|_{z=t^{-1/2}}^{\infty}  
 + \frac{1}{t^2} \int_{t^{-1/2}}^{\infty} \big| \partial_z \big\{ \frac{\sqrt{z^2 + m^2}} {z} \mathcal{E}^{\prime}(z) \big\} \big| dz\\ \les 
 \frac{1}{t^2} \int_{t^{-1/2}}^{\infty} \big| \partial_z \big\{ \frac{\sqrt{z^2 + m^2}} {z} \mathcal{E}^{\prime}(z) \big\} \big| dz.
 \end{multline*}
 The last inequality follows from the support condition on $\mathcal E(z)=0$ for $z>1$ and the Fundamental Theorem of Calculus.
Finally note that, on the support of $\mathcal E(z)$, we have
$$
\Big|  \frac  { \sqrt{z^2 + m^2}} {z}  \Big| \les z^{-1},   \,\,\,\,\ \text{and} \,\,\,\,\ \Big|   \partial_z \Big\{ \frac {\sqrt{z^2 + m^2}} {z} \Big\}\Big| \les z^{-2},
$$
which yields the claim.
\end{proof} 
 \begin{corollary} \label{cor:logt}
If $\mathcal{E}(z) =\widetilde O_2(\log^{-2} (z)) $, then for $t>2$ we have 
\begin{align*}
\bigg| \int_0^\infty e^{-it\sqrt{z^2+m^2}} \frac{z\chi(z)}{\sqrt{z^2+m^2}} \mathcal{E}(z) dz \bigg|\les \frac{1}{t \log^2 t}. 
\end{align*}
\end{corollary}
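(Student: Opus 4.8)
The plan is to apply Lemma~\ref{lem:ibp} directly with $\mathcal{E}(z) = \chi(z)\widetilde O_2(\log^{-2}(z))$, being careful only about the behavior of the logarithmic weight near $z=0$ and the dyadic-in-$z$ accounting of the integrals that appear on the right-hand side of that lemma. First I would record the three facts we need about $\mathcal{E}$ on the support of $\chi$: $|\mathcal{E}(z)| \les \log^{-2}(z)$, $|\mathcal{E}'(z)| \les \frac{1}{z\log^3 z}$, and $|\mathcal{E}''(z)| \les \frac{1}{z^2 \log^3 z}$ (the extra derivative costs a power of $z^{-1}$ and slightly improves the log power, but one only needs $\log^{-3}$ here, and in fact $\log^{-2}$ would already suffice for the dominant term). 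The boundary term $\frac{i e^{-imt}}{t}\mathcal{E}(0)$ is harmless: either one interprets $\mathcal E(0)=\lim_{z\to 0^+}\mathcal E(z)=0$ since $\log^{-2}(z)\to 0$, or one simply notes that it contributes $O(t^{-1}\log^{-2} t)$ after the same splitting is applied — but the cleanest route is that $\widetilde O_2(\log^{-2}(z))$ vanishes at $z=0$, so this term drops out entirely.

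Next I would estimate the first integral on the right of Lemma~\ref{lem:ibp}:
\[
\frac1t \int_0^{t^{-1/2}} |\mathcal{E}'(z)|\,dz \les \frac1t \int_0^{t^{-1/2}} \frac{dz}{z\log^3 z}.
\]
Since $\int \frac{dz}{z\log^3 z} = -\frac{1}{2\log^2 z}+C$, this integral evaluates to $\les \frac{1}{\log^2(t^{-1/2})} \approx \frac{1}{\log^2 t}$, giving a contribution of $\les \frac{1}{t\log^2 t}$, which is exactly the claimed bound. For the second integral,
\[
\frac{1}{t^2}\int_{t^{-1/2}}^\infty \left| \frac{\mathcal{E}'(z)}{z^2} + \frac{\mathcal{E}''(z)}{z}\right| dz \les \frac{1}{t^2}\int_{t^{-1/2}}^{2z_0} \frac{dz}{z^3 \log^3 z},
\]
where the cutoff at $2z_0$ comes from the support of $\chi$. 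The integral $\int_{t^{-1/2}}^{2z_0} z^{-3}\log^{-3} z\,dz$ is dominated by its lower endpoint and is $\les (t^{-1/2})^{-2}\log^{-3}(t^{-1/2}) \approx t/\log^3 t$, so this term contributes $\les \frac{1}{t^2}\cdot \frac{t}{\log^3 t} = \frac{1}{t\log^3 t}$, which is even smaller than required. Adding the two contributions yields $\les \frac{1}{t\log^2 t}$.

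The only genuinely delicate point — the main obstacle, such as it is — is making sure the derivative bounds on $\mathcal{E}$ are the ones actually supplied by the $\widetilde O_2$ notation: by the definition in \eqref{frak}, $\mathcal E = \widetilde O_2(\log^{-2} z)$ means $\mathcal E^{(j)} = O\big(\frac{d^j}{dz^j}\log^{-2} z\big)$ for $j=0,1,2$, and one computes $\frac{d}{dz}\log^{-2} z = -\frac{2}{z\log^3 z}$ and $\frac{d^2}{dz^2}\log^{-2} z = O\big(\frac{1}{z^2\log^3 z}\big)$, so the bounds used above are legitimate. One must also absorb the derivatives falling on the cutoff $\chi(z)$: $\chi'$ is supported in $z\approx z_0$ where all negative powers of $z$ and $\log z$ are bounded constants, so those terms contribute only $O(t^{-2})$ and are negligible. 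With these checks in place the corollary follows immediately from Lemma~\ref{lem:ibp}.
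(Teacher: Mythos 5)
Your proposal is correct and follows essentially the same route as the paper: apply Lemma~\ref{lem:ibp}, note that $\mathcal E(0)=0$, and reduce to bounding $\frac1t\int_0^{t^{-1/2}}\frac{dz}{z|\log z|^3}$ and $\frac1{t^2}\int_{t^{-1/2}}^\infty \frac{\chi(z)}{z^3|\log z|^3}\,dz$, exactly as in \eqref{eq:IBP1}. The only difference is cosmetic: where you assert that the second integral is dominated by its lower endpoint (true, and giving $\les t/\log^3 t$), the paper justifies the same bound by splitting the integral at $z=t^{-1/4}$, obtaining $t^{-1}\log^{-3}t + t^{-3/2}$, which is likewise $\les (t\log^2 t)^{-1}$.
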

\begin{proof} By Lemma~\ref{lem:ibp} it is enough to see that 
\begin{align}\label{eq:IBP1}
\f1t \int_{0}^{t^{-1/2}} \frac{1}{z \log^3 z} dz + \frac{1}{t^2} \int_{t^{-1/2}}^{\infty}  \frac{\chi(z)}{z^3 \log^3 z} dz \les \frac{1}{t \log^2 t}.
\end{align}
The desired bound for the first summand follows by direct calculation. To see the bound for the second summand, note that  
\begin{align*}
\frac{1}{t^2}\int_{t^{-1/2}}^{\infty}\frac{\chi(z) }{z^3 \log^3 z} dz   = \frac{1}{t^2}\int_{t^{-1/2}}^{t^{-1/4}} \frac{1}{z^3 \log^3 z} dz +\frac{1}{t^2} \int_{t^{-1/4}}^{z_0} \frac{1}{z^3 \log^3 z} dz.
\end{align*}
Hence, the bounds 
\begin{align*}
&\frac{1}{t^2}\bigg| \int_{t^{-1/4}}^{z_0} \frac{1}{z^3 \log^3 z} \bigg| \les \frac{1}{t^2}\bigg| \int_{t^{-1/4}}^{z_0} \frac{1}{z^3 } \bigg| \les t^{-3/2}, \\
& \frac{1}{t^2}\bigg|  \int_{t^{-1/2}}^{t^{-1/4}}  \frac{1}{z^3 \log^3 z} dz \bigg| \les  \frac{1} {t^2\log ^3 t}  \bigg| \int_{t^{-1/2}}^{t^{-1/4}} z^{-3} dz \bigg| \les \frac{1} {t\log ^3 t}, 
\end{align*} 
establish  the statement.
\end{proof}
The following lemma gives the contribution of the free resolvent to \eqref{weighteddecay}.

\begin{lemma} \label{lem:free} We have 
\begin{multline*}
\int_0^\infty e^{-it\sqrt{z^2+m^2}} \frac{ z\chi(z)}{\sqrt{z^2+m^2}}
		 [\mR_0^+(z)-\mR_0^-(z)](x,y) dz\\ = -\frac{m e^{-itm}}{t}M_{11}+ O\Big(\frac{ \la x \ra ^{3/2} \la y \ra ^{3/2} }{t^{\f54}} \Big).
\end{multline*}
\end{lemma}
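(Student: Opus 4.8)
The plan is to apply Lemma~\ref{lem:ibp} with the (matrix valued, entrywise) amplitude $\mathcal E(z):=\chi(z)\,[\mR_0^+(z)-\mR_0^-(z)](x,y)$, which is supported in $(0,2z_0)$ as the lemma requires, so that the integral in question equals $\int_0^\infty e^{-it\sqrt{z^2+m^2}}\frac{z}{\sqrt{z^2+m^2}}\mathcal E(z)\,dz$. By \eqref{R0dif} in Remark~\ref{rmk:rdif} we may write $\mathcal E(z)=\chi(z)\big(imM_{11}+E_1(z)(x,y)\big)$, where, with $r=|x-y|$,
$$
|E_1(z)(x,y)|\les z^{\f12}\la r\ra^{\f12},\qquad |\partial_zE_1(z)(x,y)|\les z^{-\f12}\la r\ra^{\f12},\qquad |\partial_z^2E_1(z)(x,y)|\les z^{-\f12}\la r\ra^{\f32}.
$$
Since $\chi(0)=1$ and $E_1(z)(x,y)\to0$ as $z\to0^+$, we have $\mathcal E(0)=imM_{11}$, so Lemma~\ref{lem:ibp} produces the main term $\frac{ie^{-imt}}{t}\mathcal E(0)=\frac{ie^{-imt}}{t}\,imM_{11}=-\frac{me^{-imt}}{t}M_{11}$, which is exactly the asserted leading behaviour. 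It then remains to bound
$$
\mathcal R:=\f1t\int_0^{t^{-1/2}}|\mathcal E'(z)|\,dz+\frac1{t^2}\int_{t^{-1/2}}^{\infty}\Big|\frac{\mathcal E'(z)}{z^2}+\frac{\mathcal E''(z)}{z}\Big|\,dz
$$
by $\la x\ra^{3/2}\la y\ra^{3/2}\,t^{-5/4}$.

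For this I would expand $\mathcal E'=\chi'\,(imM_{11}+E_1)+\chi E_1'$ and $\mathcal E''=\chi''\,(imM_{11}+E_1)+2\chi'E_1'+\chi E_1''$. The pieces carrying $\chi'$ or $\chi''$ are supported where $z\approx z_0$; for $t$ large these do not meet $(0,t^{-1/2})$, and there the bounds above give $|E_1|,|\partial_zE_1|\les\la r\ra^{1/2}$, so they contribute $\les t^{-2}\la r\ra^{1/2}$ to $\mathcal R$ (for $t$ in a bounded range $t^{-5/4}$ is bounded below, so this is immediate). For the surviving pieces, the derivative bounds on $E_1$ give
$$
\f1t\int_0^{t^{-1/2}}|\chi E_1'|\,dz\les\la r\ra^{1/2}\,t^{-1}\int_0^{t^{-1/2}}z^{-1/2}\,dz\les\la r\ra^{1/2}\,t^{-5/4},
$$
$$
\frac1{t^2}\int_{t^{-1/2}}^{2z_0}\frac{|\chi E_1'|}{z^2}\,dz\les\la r\ra^{1/2}\,t^{-2}\int_{t^{-1/2}}^{2z_0}z^{-5/2}\,dz\les\la r\ra^{1/2}\,t^{-2}\,t^{3/4}=\la r\ra^{1/2}\,t^{-5/4},
$$
$$
\frac1{t^2}\int_{t^{-1/2}}^{2z_0}\frac{|\chi E_1''|}{z}\,dz\les\la r\ra^{3/2}\,t^{-2}\int_{t^{-1/2}}^{2z_0}z^{-3/2}\,dz\les\la r\ra^{3/2}\,t^{-2}\,t^{1/4}=\la r\ra^{3/2}\,t^{-7/4}.
$$
Each of these is $\les\la r\ra^{3/2}\,t^{-5/4}\les\la x\ra^{3/2}\la y\ra^{3/2}\,t^{-5/4}$, using $\la r\ra=\la|x-y|\ra\le\la x\ra\la y\ra$. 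Adding the estimates yields $\mathcal R\les\la x\ra^{3/2}\la y\ra^{3/2}\,t^{-5/4}$, which together with the main term gives the claim.

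The only point needing care is the balancing of the $z^{-1/2}$ singularities of $\partial_zE_1$ and $\partial_z^2E_1$ against the weights $z^{-2}$ and $z^{-1}$ in Lemma~\ref{lem:ibp}, with the integrals cut at $z=t^{-1/2}$: this is precisely what produces the exponent $\f54$ (rather than $1$), and it is why the new second-derivative bound on $E_1$ (equivalently the bound on $\partial_z^2E_0^{\pm}$ recorded after Lemma~\ref{lem:R0exp}) is indispensable. No cancellation or orthogonality is used here, only the Bessel asymptotics already collected in Remark~\ref{rmk:rdif}.
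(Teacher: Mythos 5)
Your proposal is correct and follows essentially the same route as the paper: apply Lemma~\ref{lem:ibp} to $\mathcal E(z)=\chi(z)[\mR_0^+-\mR_0^-](z)(x,y)$, read off $\mathcal E(0)=imM_{11}$ from \eqref{R0dif} to get the boundary term $-\frac{me^{-itm}}{t}M_{11}$, and bound the remainder integrals using the derivative estimates of Remark~\ref{rmk:rdif}, with the cut at $z=t^{-1/2}$ producing the $t^{-5/4}$ rate. Your treatment is in fact slightly more detailed (explicit handling of the $\chi'$, $\chi''$ pieces, and use of the sharper $z^{-1/2}\la r\ra^{3/2}$ bound on $\partial_z^2E_1$, where the paper just uses the $\widetilde O_2$ convention), but there is no substantive difference.
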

\begin{proof}
We use Lemma~\ref{lem:ibp} for $\mathcal{E}(z) = \chi(z) [\mR^{+}_0- \mR^{-}_0](z)(x,y)$.  By \eqref{R0dif} we have 
\begin{align}  \label{withnozterm}
\chi(z) [\mR^{+}_0- \mR^{-}_0] & = \chi(z) mi M_{11}+ \chi(z) \widetilde O_2(z^{1/2} \la x \ra ^{3/2} \la y \ra ^{3/2}) 
\end{align}
 Therefore, $ \mathcal{E}(0) = mi M_{11}$ and $ |\partial_z^k\mathcal{E}(z) | \les z^{1/2-k} \la x \ra ^{3/2} \la y \ra ^{3/2}$ for $k=1,2$. Hence, by Lemma~\ref{lem:ibp} we obtain
\begin{multline*}
\Big|  \int_0^\infty e^{-it\sqrt{z^2+m^2}} \frac{z}{\sqrt{z^2+m^2}} \mathcal{E}(z) dz + \frac{m e^{-itm}}{t}M_{11} \Big|  \\ \les {\f {\la x \ra ^{3/2} \la y \ra ^{3/2}} t} \int_0^{t^{-1/2}} z^{-1/2}  dz   + \frac{\la x \ra ^{3/2} \la y \ra ^{3/2}}{t^2} \int_{t^{-1/2}}^{\infty} z^{-5/2} dz 
  \les t^{-5/4} \la x \ra ^{3/2} \la y \ra ^{3/2} .
 \end{multline*}
\end{proof}
Our approach for establishing Theorem~\ref{thm:Sec4} will be to control the integrals in \eqref{eq:ibp} directly.  Unlike in the proof of Theorem~\ref{th:main1}, we need to have the exact form of the boundary term at $z=0$ when integrating by parts.  These exact values are critical to our proofs, and hence our strategy will differ from the previous section. In addition, since we are considering bounds that can depend on $x,y$ our technical approach  and choice of expansions will differ. Using the expansion in Lemma~\ref{lem:R0exp}, \eqref{G0 def}, and expanding $\mathcal G_0$ into two terms, we write $\mR^{\pm}_0=\mR_6 + \mR_7^{\pm} $, where
\begin{align}\label{R2R1 def}
\begin{split}
&  \mR_6:= \frac{i \alpha(x-y)}{2\pi |x-y|^{2}},\\
&\mR_{7}^{\pm} :=2m g^{\pm}(z) M_{11} + 2mG_0 I_1 + E^{\pm}_0(z)(x,y).
\end{split}
\end{align} 
Here $E^{\pm}_0(z)(x,y)$ is not identical to the error term in Lemma~\ref{lem:R0exp}, however, it satisfies the same bounds. This is a slightly different decomposition than we use in Section~\ref{sec:nonweight}, in particular $\mR_6$ does not depend on $z$.

Now we consider the second  term in $\eqref{Rv+-}$.  Using the expansion above we write
\begin{multline}\label{weightedexp}
\mR_0^{\pm}v^*M^{-1}_\pm v\mR^{\pm}_0=\mR_6v^*M^{-1}_{\pm} v\mR_6  + \mR_7^{\pm}v^*M^{-1}_{\pm} v\mR_6 \\ 
+\mR_6 v^*M^{-1}_\pm v\mR_7^{\pm} + \mR_7^{\pm}v^*M^{-1}_\pm v\mR_7^{\pm} .
\end{multline}

We start estimating the contribution of the last term in the above sum to the Stone's formula, \eqref{eq:Stone}. Note that the boundary term appearing in Lemma~\ref{fromeg3}  cancels the boundary term appearing in Lemma~\ref{lem:free} above  when substituted into \eqref{Rv+-}.	   
\begin{lemma} \label{fromeg3} Let $|V(x)| \les \la x \ra ^{-5-}$. Then, for $t>2$ we have 
\begin{multline*} 
I\big([\mR_7^{+}v^*M_{+}^{-1}v \mR_7^{+}-\mR_7^{-}v^*M_{-}^{-1}v \mR_7^{-}](x,y)\big) \\
		   =-\frac{m e^{-itm}}{t}M_{11} + O\Big( \frac{ \big(1+\log^{+}|x| \big) \big(1+\log^{+}|y| \big)}{t\log^2 t} \Big) +O\Big(\frac{ \la x \ra ^{3/2} \la y \ra ^{3/2} }{t^{1+}} \Big).
\end{multline*}
\end{lemma}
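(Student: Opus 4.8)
The plan is to insert the expansions $\mR_7^{\pm}(z)=2mg^{\pm}(z)M_{11}+2mG_0I_1+E_0^{\pm}(z)$ from \eqref{R2R1 def} and $M_{\pm}^{-1}(z)=h^{\pm}(z)^{-1}S+QD_0Q+E^{\pm}(z)$ from Lemma~\ref{lem:Minverseregular} into $\mR_7^{\pm}v^{*}M_{\pm}^{-1}v\mR_7^{\pm}$, multiply out, and classify the $3\times3\times3$ resulting products by their behavior in the spectral variable $z$ and their spatial growth. Every product built only from $\pm$--independent factors (both outer factors equal to $2mG_0I_1$, middle factor $QD_0Q$) cancels identically in the difference ($+$ minus $-$) coming from \eqref{Rv+-}, so it suffices to bound products containing at least one of $g^{\pm}$, $E_0^{\pm}$, $h^{\pm}$, $E^{\pm}$. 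I would isolate one principal product, which supplies the exact boundary term, and dispatch the rest by three routine mechanisms.

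\emph{The principal product.} Consider $(2mg^{\pm}(z))^{2}\,M_{11}v^{*}h^{\pm}(z)^{-1}Sv M_{11}$. Writing $S=(1-QD_0QT)P(1-TQD_0Q)$ and using the orthogonality $M_{11}v^{*}Q=QvM_{11}=0$ from Remark~\ref{rmk:resonances}, one reduces $M_{11}v^{*}SvM_{11}$ to $M_{11}v^{*}PvM_{11}$, which equals $\|(a,c)\|_2^{2}M_{11}$ by \eqref{vM11v}. Since $2mg^{\pm}(z)=\mathbbm g^{\pm}(z)/\|(a,c)\|_2^{2}$ and $h^{\pm}(z)=\mathbbm g^{\pm}(z)+c_0$ with $c_0:=\mathrm{trace}(PTP-PTQD_0QTP)$ a constant, this product equals $\frac{1}{\|(a,c)\|_2^{2}}\big(\mathbbm g^{\pm}(z)-c_0+c_0^{2}h^{\pm}(z)^{-1}\big)M_{11}$. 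Using $g^{+}-g^{-}=\frac{i}{2}$ one gets $\mathbbm g^{+}-\mathbbm g^{-}=im\|(a,c)\|_2^{2}$, so the $\pm$--difference of this product is $imM_{11}-imc_0^{2}(h^{+}h^{-})^{-1}M_{11}$. Feeding the first summand into $I(\cdot)$ and integrating by parts as in \eqref{eq:ibp} with $\chi(0)=1$ produces exactly the stated boundary term $-\frac{me^{-itm}}{t}M_{11}$ together with a $\chi'$--integral supported away from $z=0$, hence $O(t^{-2})$; and since $|h^{\pm}(z)|\gtrsim|\log z|$ and $|\partial_z^{j}(h^{+}h^{-})|\les z^{-j}\log^{2}z$ near $z=0$, one has $(h^{+}h^{-})^{-1}=\widetilde O_2(\log^{-2}z)$, so the second summand contributes $O(t^{-1}\log^{-2}t)$ by Corollary~\ref{cor:logt}.

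\emph{The subordinate products.} For each remaining product I would show its $\pm$--difference is either (a) $\widetilde O_2(\log^{-2}z)$ times an operator whose kernel grows at most like $w(x)w(y)$, or (b) $\widetilde O(z^{1/2})$ (or better) times an operator whose kernel grows at most like $\la x\ra^{3/2}\la y\ra^{3/2}$; case (a) gives the $\frac{w(x)w(y)}{t\log^{2}t}$ contribution via Corollary~\ref{cor:logt}, case (b) the $\frac{\la x\ra^{3/2}\la y\ra^{3/2}}{t^{1+}}$ contribution via Lemma~\ref{lem:ibp}. Three mechanisms cover all cases: (i) any product with $QD_0Q$ in the middle and at least one $M_{11}$ outer factor vanishes by $M_{11}v^{*}Q=QvM_{11}=0$; (ii) for a product with $h^{\pm}(z)^{-1}$ in the middle and outer factors only among $\{g^{\pm}M_{11},\,2mG_0I_1\}$, the only $z$--dependence sits in the scalars $g^{\pm}(z)$ and $h^{\pm}(z)^{-1}$, and the identities $\frac{g^{+}}{h^{+}}-\frac{g^{-}}{h^{-}}=\frac{ic_0/2}{h^{+}h^{-}}$ and $\frac{1}{h^{+}}-\frac{1}{h^{-}}=\frac{-im\|(a,c)\|_2^{2}}{h^{+}h^{-}}$ each supply the $\widetilde O_2(\log^{-2}z)$ gain, while the spatial growth is at most logarithmic because each $G_0$ factor contributes $|\log|x-x_1||\les w(x)+\log^{-}|x-x_1|+\log\la x_1\ra$ and each $M_{11}$ factor contributes none; (iii) any product containing $E^{\pm}(z)$ in the middle or $E_0^{\pm}(z)$ as an outer factor gains a power $z^{1/2}$ from $|\partial_z^{j}E^{\pm}|\les z^{1/2-j}$ and $|\partial_z^{j}E_0^{\pm}|\les z^{k-j}$ ($j=0,1$), and the weight $\la x\ra^{3/2}\la y\ra^{3/2}$ comes from the $|x-y|^{3/2}$ in the second--derivative bound on $E_0^{\pm}$ in Lemma~\ref{lem:R0exp} (which enters only through the $\int_{t^{-1/2}}^{\infty}$ term in Lemma~\ref{lem:ibp}, where $z$ is bounded below), being integrable against $|v|^{2}$ since $\delta>\frac{7}{2}$. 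In all cases the spatial integrals converge by the decay $|v_{ij}|\les\la x\ra^{-5/2-}$, the absolute boundedness of $S$ and $QD_0Q$, and Lemma~\ref{lem:potential}.

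\emph{Main difficulty.} The delicate point is the bookkeeping rather than any single estimate: pinning down the exact constant $imM_{11}$ in the principal product so that, after the minus sign from \eqref{Rv+-}, it cancels the boundary term of Lemma~\ref{lem:free}; and, for the subordinate products, cleanly separating the genuine $w(x)w(y)$--growth produced by $G_0$ from the harmless local $\log^{-}$ singularities, while simultaneously checking the $\widetilde O_2(\log^{-2}z)$ bounds for every $\pm$--difference of the $h^{\pm}$--dependent scalars and verifying that each weighted spatial integral converges under the hypothesis $\delta>5$.
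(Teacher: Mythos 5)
Your proposal is correct and follows essentially the same route as the paper: expand $\mR_7^\pm$ and $M_\pm^{-1}$, kill the $M_{11}$--$QD_0Q$ interactions by orthogonality, extract the constant $imM_{11}$ from the $\pm$-difference of the scalar $(2mg^\pm)^2/h^\pm$ via $M_{11}v^*SvM_{11}=\|(a,c)\|_2^2M_{11}$ and $g^+-g^-=\tfrac i2$, and then treat the remaining pieces with Corollary~\ref{cor:logt} (logarithmic gain with $w(x)w(y)$ weights) and Lemma~\ref{lem:ibp} (the $\widetilde O_2(z^{1/2-})$ errors with $\la x\ra^{3/2}\la y\ra^{3/2}$ weights), exactly as in \eqref{eq:R7MR7}--\eqref{eq:bdyterms}. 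Your algebraic identity $(\mathbbm g^\pm)^2/h^\pm=\mathbbm g^\pm-c_0+c_0^2/h^\pm$ is just a more explicit form of the paper's \eqref{eq:hdecays}, and the small numerical aside about $\delta>\tfrac72$ is harmless since the hypothesis $\delta>5$ supplies the needed decay.
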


\begin{proof}
We note that by \eqref{R2R1 def} and Lemma~\ref{lem:Minverseregular}, and recalling that $M_{11}v^*Q=QvM_{11}=0$, we have
\begin{multline*}
	\mR_7^\pm v^* M_{\pm}^{-1}v\mR_7^\pm (z)
	=\frac { [2m g^{\pm}(z) ]^2}{ h^{\pm}(z) }M_{11}v^*SvM_{11}+h_\pm^{-1}(z)[ G_0I_1v^* S v G_0 I_1 ]\\
	+\frac{g^{\pm}(z) }{ h^{\pm}(z) }[M_{11}v^* S v G_0 I_1 + G_0 I_1v^* S v M_{11}] +[G_0I_1 v^* QD_0QvG_0I_1] \\+  \widetilde O_2(z^{1/2-})\Omega_0(x,y).
\end{multline*}
Where $|\Omega_0(x,y)|\les (\la x\ra \la y \ra)^{3/2}$.
The contribution of the $\log^{-}|x-x_1|$ terms in $E_0^\pm$, see \eqref{R2R1 def}, to $\Omega_0$ are easily controlled in the $x_1$ integral since $ v(\cdot)\log^{-}|x-\cdot|\in L^2$.

 Recall that 
$ h^\pm (z) = (2m   g^\pm(z)+p)\| (a,c) \|^2$. Also note that using \eqref{g def} we have $g^+(z)-g^-(z)=\frac{i}{2}$. Thus we obtain 
\begin{align} \label{eq:hdecays}
	&\frac { [2m g^{+}(z) ]^2}{ h^{+}(z) }  - \frac { [2m g^{-}(z) ]^2}{h^{-}(z)}  
	= \frac{mi}{\| (a,c) \| ^2}+\widetilde O_2\big( (\log z )^{-2} \big), \\
	&\frac{g^{+} (z)}{h^{+}(z)} - \frac{g^{+} (z)}{h^{+}(z)} , \quad
	\frac{1}{h^+(z)}-\frac{1}{h^-(z)} =\widetilde O_2\big( (\log z )^{-2} \big), \nn
\end{align}
Now, recalling \eqref{G0 def} and the absolute boundedness of $S$, one can see $|G_0I_1v^*S vG_0I_1(x,y)|\les (1+\log^+|x|)(1+\log^+|y|)$, and  $| [M_{11}v^* S v G_0 I_1 + G_0 I_1v^* S v M_{11}] | \les  (1+\log^+|x|)(1+\log^+|y|).$ Hence, 
\begin{multline}\label{eq:R7MR7}
	[\mR_7^+ v^* M_{+}^{-1}v\mR_7^+-\mR_7^- v^* M_{-}^{-1}v\mR_7^-](z)
	=\frac{mi}{\| (a,c) \| ^2}M_{11}v^*SvM_{11}\\
	+(1+\log^+|x|)(1+\log^+|y|)\widetilde O_2\bigg( \frac{1}{\log^2 z}\bigg)+(\la x\ra \la y \ra)^{3/2} \widetilde O_2(z^{1/2-}).
\end{multline}
The second summand is bounded  by $(t \log^2 t)^{-1} (1+\log^+|x|)(1+\log^+|y|) $ using Corollary~\ref{cor:logt}, while the third summand is bounded by $t^{-1-} (\la x\ra \la y \ra)^{3/2}$ using Lemma~\ref{lem:ibp}. For the first summand, by integration by parts we have 
\begin{multline}\label{eq:bdyterms}
\int_0^{\infty} e^{-it\sqrt{z^2+m^2}} \frac{ z\chi(z)}{\sqrt{z^2+m^2}} [M_{11} v^* Sv M_{11} ](x,y) = - \frac{e^{-itm}}{it} [M_{11} v^* Sv M_{11} ](x,y)  \\ + [M_{11} v^* Sv M_{11} ](x,y) O(t^{-2}).
\end{multline}
Recalling the definition of $P$ and $S$ in Lemma~\ref{lem:Minverseregular}, 
we have $M_{11}v^*SvM_{11}=M_{11} v^* Pv M_{11}=\| (a,c) \| ^2 M_{11}$, which cancels the $\|(a,c)\|^2$ in the denominator of the first summand in \eqref{eq:R7MR7}.  This calculation also implies that the second term is bounded by $t^{-2}$ uniformly  in $x,y$. 
\end{proof}

Next we estimate the contribution of the rest of the terms in \eqref{weightedexp} to the Stone's formula. By symmetry it is enough to consider the terms 
$\mR_6 v^*M^{-1}_{\pm} v \mR_6 $ and $  \mR_6v^*M^{-1}_{\pm} v \mR_7^{\pm}$. We start with $  \mR_6v^*M^{-1}_{\pm} v \mR_7^{\pm}$. 
\begin{lemma} \label{lem:nw12} Let $|V(x)| \les \la x \ra ^{-5-}$. Then, for $t>2$, we have  
\begin{multline*} 
I( [\mR_6 v^*M_{+}^{-1}v \mR_7^{+}-\mR_6 v^*M_{-}^{-1}v \mR_7^{-}](x,y) )
		   = O\Big( \frac{ \big(1+\log^{+}|y| \big)}{t\log^2 t} \Big) +O\Big(\frac{ \la y \ra ^{3/2} }{t^{1+}} \Big).
\end{multline*}
\end{lemma}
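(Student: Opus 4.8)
The plan is to mimic the proof of Lemma~\ref{fromeg3}, the differences being that $\mR_6(x,x_1)=\frac{i\alpha\cdot(x-x_1)}{2\pi|x-x_1|^2}$ is independent of $z$, satisfies $|\mR_6(x,x_1)|\les|x-x_1|^{-1}$ — so that $\mR_6(x,\cdot)v(\cdot)\in L^1$ uniformly in $x$ — and carries no logarithmic spatial growth. Inserting the expansion $\mR_7^\pm(z)=2mg^\pm(z)M_{11}+2mG_0I_1+E_0^\pm(z)$ from \eqref{R2R1 def} and the expansion $M_\pm^{-1}(z)=h^\pm(z)^{-1}S+QD_0Q+E^\pm(z)$ from Lemma~\ref{lem:Minverseregular}, I would write $\mR_6v^*M_\pm^{-1}v\mR_7^\pm$ as a sum of nine terms. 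Two drop out at once: the orthogonality $QvM_{11}=0$ kills $\mR_6v^*(QD_0Q)v(2mg^\pm(z)M_{11})$, and $\mR_6v^*(QD_0Q)v(2mG_0I_1)$ is independent of $z$, hence cancels in the $\pm$ difference entering the Stone's formula. The remaining seven terms fall into a ``logarithmic'' family (two terms) and an ``error'' family (five terms), controlled respectively by Corollary~\ref{cor:logt} and by Lemma~\ref{lem:ibp}.

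For the logarithmic family, $\frac{2mg^\pm(z)}{h^\pm(z)}\mR_6v^*SvM_{11}$ and $\frac{2m}{h^\pm(z)}\mR_6v^*SvG_0I_1$, the key observation — which also explains why the lemma carries no $t^{-1}$ boundary term, unlike Lemma~\ref{fromeg3} — is that, writing $h^\pm(z)=(2mg^\pm(z)+p)\|(a,c)\|^2$ with $p\|(a,c)\|^2=\mathrm{trace}(PTP-PTQD_0QTP)$ and using $g^+(z)-g^-(z)=\tfrac i2$ from \eqref{g def},
\[
\frac{2mg^+(z)}{h^+(z)}-\frac{2mg^-(z)}{h^-(z)}=\frac{pmi}{\|(a,c)\|^2\,(2mg^+(z)+p)(2mg^-(z)+p)}=\widetilde O_2\big((\log z)^{-2}\big),
\]
\[
\frac1{h^+(z)}-\frac1{h^-(z)}=\frac{-mi}{\|(a,c)\|^2\,(2mg^+(z)+p)(2mg^-(z)+p)}=\widetilde O_2\big((\log z)^{-2}\big),
\]
with no surviving constant term — in contrast to $(2mg^\pm(z))^2/h^\pm(z)$, which produced the boundary term of Lemma~\ref{fromeg3} that is needed to cancel the one from Lemma~\ref{lem:free}. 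Since $\mR_6v^*SvM_{11}$ and $\mR_6v^*SvG_0I_1$ are independent of $z$, each of these contributions to the Stone's formula is $\widetilde O_2((\log z)^{-2})$ times a fixed kernel; one bounds that kernel by $1$ and by $w(y)=1+\log^+|y|$ respectively, using absolute boundedness of $S$ (Lemma~\ref{lem:Minverseregular}), the estimate $|G_0I_1(x_2,y)|\les 1+|\log|x_2-y||$ together with the fast decay of $v$, and Lemma~\ref{lem:potential} for the singular but integrable $\mR_6v^*$ integral. Corollary~\ref{cor:logt} then shows each contributes $O\big(\tfrac{w(y)}{t\log^2t}\big)$.

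The error family consists of $\frac1{h^\pm(z)}\mR_6v^*SvE_0^\pm(z)$, $\mR_6v^*(QD_0Q)vE_0^\pm(z)$, and the three terms obtained from $\mR_6v^*E^\pm(z)v\mR_7^\pm$ by expanding the right-hand factor; these are handled by smallness in $z$ rather than by $\pm$ cancellation. Using the bounds on $E_0^\pm$ from Lemma~\ref{lem:R0exp} with exponent $k$ slightly below $\tfrac32$ — which is precisely where the hypothesis $|V(x)|\les\la x\ra^{-5-}$, hence $|v_{ij}|\les\la x\ra^{-5/2-}$, is used, since it makes $v(\cdot)\,|\cdot-y|^{3/2}\in L^2$ with norm $\les\la y\ra^{3/2}$ — together with the Hilbert--Schmidt bounds on $\partial_z^jE^\pm(z)$, $j=0,1,2$, from Lemma~\ref{lem:Minverseregular}, each of these terms is, after the convergent spatial integrations, of the form $\widetilde O_2\big(z^{1/2-}\la y\ra^{3/2}\big)$, the residual $\log z$ factors from $h^\pm(z)^{-1}$ or from $g^\pm(z)$ being harmless against the gained power of $z$. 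Since the boundary value at $z=0$ vanishes, Lemma~\ref{lem:ibp} bounds the corresponding contribution by $O\big(\tfrac{\la y\ra^{3/2}}{t^{1+}}\big)$. Adding the two families yields the stated estimate.

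The step I expect to be the main obstacle is the bookkeeping behind the previous two paragraphs: one must verify that among the nine terms, each that is not killed by $QvM_{11}=0$ or by $z$-independence either acquires the full $(\log z)^{-2}$ gain after the $\pm$ subtraction or genuinely carries a positive power of $z$; the scalar identities displayed above are what certify the former and, in particular, rule out any $t^{-1}$-order boundary term, while the $z$-smallness of $E_0^\pm$ and $E^\pm$ secures the latter. The spatial integrals themselves — pairing the singular but $L^1$ kernel $\mR_6(x,\cdot)v(\cdot)$ against the absolutely bounded operators $S$ and $QD_0Q$ and against the logarithmic kernels $\mathcal G_0,G_0$ — are estimated exactly as in the proofs of Lemmas~\ref{lem:gamma j=2}, \ref{lem:gamma j=3} and \ref{fromeg3}, via Lemma~\ref{lem:potential} and the absolute boundedness of the operators involved.
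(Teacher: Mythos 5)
Your decomposition into nine terms, the scalar identities for $\tfrac{2mg^+}{h^+}-\tfrac{2mg^-}{h^-}$ and $\tfrac1{h^+}-\tfrac1{h^-}$, and the sorting into a ``$(\log z)^{-2}$ family'' and a ``$z^{1/2-}$ family'' are all consistent with what actually happens. But there is a genuine gap in how you propose to run the spatial integrals, and it is exactly the obstruction the paper's proof is built to avoid. You plan to pair the kernel $\mR_6(x,\cdot)v^*(\cdot)$ directly against the operators $S$, $QD_0Q$ and $E^{\pm}(z)$. These operators are controlled only as (absolutely) bounded or Hilbert--Schmidt operators on $L^2$, so the pairing requires $\mR_6(x,\cdot)v^*(\cdot)\in L^2_{x_1}$ uniformly in $x$. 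It is not: $|\mR_6(x,x_1)v^*(x_1)|\les |x-x_1|^{-1}\la x_1\ra^{-5/2-}$ and $|x-x_1|^{-1}\notin L^2_{\mathrm{loc}}(\R^2)$. Membership in $L^1$, which is all you establish, does not let you compose with an $L^2\to L^2$ operator. This kills precisely the terms you cannot dispose of by orthogonality or $z$-independence, namely $\mR_6 v^*QD_0Q\,v\,E_0^\pm$, the $QD_0QTP\cdots$ blocks of $S$, and all of $\mR_6 v^*E^\pm v\mR_7^\pm$.

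The paper's proof removes this obstruction before expanding $M_\pm^{-1}$: it first applies the iterated resolvent identity \eqref{FIT} to write
\begin{align*}
\mR_6 v^*M_{\pm}^{-1}v \mR_7^{\pm} = \mR_6 V\mR_7^{\pm} - \mR_6 V \mR_0^{\pm} v^{*}M_{\pm}^{-1}v\mR_7^{\pm},
\end{align*}
so that the factor entering $M_\pm^{-1}$ from the left is $\mR_6 V\mR_0^\pm v^*$; after the intermediate integration this kernel is in $L^2$ uniformly in $x$ by Lemma~\ref{lem:potential}, and only then are the expansions of $M_\pm^{-1}$ and $\mR_7^\pm$ inserted. (This is the same device used throughout Section~\ref{sec:nonweight}, where the paper states explicitly that the identity is used because $\mR_6$ does not map $L^1\to L^2$.) A side effect of that route is that each of the two resulting pieces produces a genuine $t^{-1}$ boundary term, $\mp\frac{m e^{-imt}}{t}\mR_6VM_{11}$, and the absence of a $t^{-1}$ contribution in the lemma comes from their cancellation rather than from your (correct, but differently deployed) observation that $\tfrac{2mg^+}{h^+}-\tfrac{2mg^-}{h^-}$ and $\tfrac1{h^+}-\tfrac1{h^-}$ carry no constant term. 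To repair your argument you would need either to insert \eqref{FIT} as the paper does, or to supply pointwise kernel bounds for $QD_0Q$ and $E^\pm$ that the paper does not have.
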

\begin{proof} As in the previous section, since $\mR_6$ doesn't map $L^1\to L^2$, we iterate resolvent identities, \eqref{FIT}, to write 
\begin{multline*}
[\mR_6 v^*M_{+}^{-1}v \mR_7^{+}-\mR_6 v^*M_{-}^{-1}v \mR_7^{-}] = \\
- \mR_6 V\big(\mR^{+}_7 - \mR ^{-}_7 \big) - \mR_6V\big( \mR_0^{+} v^{*}M_{+}^ {-1}v\mR^{+}_7-\mR_0^{-} v^{*}M_{-}^ {-1}v\mR^{-}_7\big) =: - \Gamma_7^1- \Gamma_7^2.
\end{multline*}
As in the previous lemma, we proceed via integration by parts.  Both terms will have a boundary term of size $t^{-1}$ when $z=0$.  As before, we show that these terms cancel, and the remaining terms decay faster for large $t$.

We start with $ \Gamma^1_7$. By \eqref{R0dif}, then using  Lemma~\ref{lem:potential} we obtain
\begin{align*}
 \Gamma^1_7 &= mi \mR_6VM_{11} +  \widetilde O_2(z^{1/2}) \int_{\R^4}  [\mR_6V](x,x_1)\la x_1-y\ra^{3/2} dx_1  \\
 & = mi \mR_6VM_{11} + \widetilde O_2(z^{1/2}) \la y \ra ^{3/2} .
\end{align*} 
Using that $\mR_6$ is independent of $z$ and \eqref{eq:bdyterms} for the first summand, and using Lemma~\ref{lem:ibp} for the second, we see
$$
I(  {\Gamma}_7^1)= - \frac{me^{-imt}}{t} \mR_6VM_{11} + O(t^{-1-})\la y \ra ^{3/2}.
$$

Next we estimate $ \Gamma_7^2$. Using Lemma~\ref{lem:R0exp}, Lemma~\ref{lem:Minverseregular}, and recalling that $M_{11}v^*Q=QvM_{11}=0$, and \eqref{eq:hdecays},  we have
\begin{multline*}
	\Gamma_7^2(z)(x,y)\\
	=-\frac{mi}{\| (a,c) \| ^2} \mR_6V M_{11}v^*SvM_{11} + \widetilde O_2\bigg( \frac{1}{\log^2 z}\bigg) \Omega_1(x,y) + \widetilde O_2(z^{1/2-}) \Omega_2 (x,y),
\end{multline*}
where 
$$
\Omega_1  =   \mR_6V M_{11}v^*SvM_{11} +  \mR_6V M_{11}v^*SvG_0I_1 +  \mR_6V \mathcal{G}_0 v^*SvM_{11} +  \mR_6 V\mathcal{G}_0 v^*S vG_0I_1, 
$$
and the kernel of $\Omega_2$ (with $r_1= |x-x_1|$, $r_2 = | x_1- y_1|$, $r_3 = | y- y_1|$) satisfies the bound 
$$
|\Omega_2(x,y)|  \les \int_{\R^6} r_1^{-1} |V(x_1)| \big( r_2^{-1}  + r_2^{3/2} \big) |[v^* A v](x_2,y_1)| \big( r_3^{0-}  + r_3^{3/2}\big)  dx_1 dx_2 dy_1, 
$$
for some  absolutely bounded operator, $A$. Using Lemma~\ref{lem:potential}, one can see that 
$$
|\Omega_1(x,y) | \les 1+\log^{+}(y)  , \,\,\,\,\,\  |\Omega_2(x,y) | \les  \la y \ra^{3/2}.
$$
Hence, 
\begin{multline*}
	\Gamma_7^2(z) 
	=-\frac{mi}{\| (a,c) \| ^2} \mR_6V M_{11}v^*SvM_{11}  + (1+\log^{+} |y|)  \widetilde O_2\bigg( \frac{1}{\log^2 z}\bigg)  
	 + \la y \ra^{3/2} \widetilde O_2(z^{1/2-}).
\end{multline*}
Using \eqref{eq:bdyterms}, the first summand's contribution is
\begin{align*}
-\frac{mi}{\| (a,c) \| ^2} I(  \mR_6V M_{11}v^*SvM_{11}) & =   \frac{m e^{-imt} }{t \| (a,c) \| ^2} \mR_6 V M_{11}v^*SvM_{11}+ O(t^{-2}) \\
& =  \frac{m e^{-imt} }{t }\mR_6 V M_{11}+ O(t^{-2}).
\end{align*} 
Here, we once again used that $M_{11}v^*SvM_{11}=\| (a,c) \| ^2 M_{11}$.
The second summand in $\Gamma_7^2$ is bounded by $  \big(1+\log^{+}|y| \big)( t\log^2 t)^{-1} $ using Corollary~\ref{cor:logt}. The third summand is bounded by $ \la y \ra ^{3/2} t^{-1-} $ using Lemma~\ref{lem:ibp}. Adding $  I( {\Gamma}_6^1)$ to $I(\Gamma_7^2)$ completes the proof.
\end{proof}

\begin{lemma} Let $|V(x)| \les \la x \ra ^{-5-}$. Then, for $t>2$, we have  
\begin{align*} 
I\big( [\mR_6 v^*M_{+}^{-1}v \mR_6 -\mR_6 v^*M_{-}^{-1}v \mR_6 ](x,y) \big)
		   = O( t^{-1}(\log t)^{-2}).
\end{align*}
\end{lemma}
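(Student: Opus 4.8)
The plan is to handle the term $\mR_6 v^*M^{-1}_{\pm} v \mR_6$ in parallel with Lemma~\ref{lem:nw12}, iterating the resolvent identity \eqref{FIT} on \emph{both} sides since $\mR_6$ does not map $L^1\to L^2$. Concretely, I would write
$$
\mR_6 v^*M^{-1}_{\pm} v \mR_6 = \mR_6 V \mR_6 - \mR_6 V \mR_0^{\pm} v^* M_{\pm}^{-1} v \mR_6,
$$
and then apply \eqref{FIT} once more on the right of the second term to pull out the bare $\mR_6$, arriving at
$$
\mR_6 v^*M^{-1}_{\pm} v \mR_6 = \mR_6 V \mR_6 - \mR_6 V \mR_0^{\pm} v^* M_{\pm}^{-1} v \mR_0^{\pm} V \mR_6.
$$
The first term $\mR_6 V \mR_6$ is independent of $z$, so in the Stone's formula it contributes only through the boundary term in \eqref{eq:bdyterms}; moreover it is identical in the $+$ and $-$ channels, so it cancels in the difference $\mR_6 v^* M_+^{-1} v \mR_6 - \mR_6 v^* M_-^{-1} v \mR_6$ entirely. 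Thus everything reduces to controlling $I(\mR_6 V \mR_0^{\pm} v^* M_{\pm}^{-1} v \mR_0^{\pm} V \mR_6)$ in the difference.

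For that term I would insert the decomposition $\mR_0^\pm = \mR_6 + \mR_7^\pm$ into the \emph{inner} two resolvents and expand the product, reducing to four pieces of the form $\mR_6 V \,\square\, v^* M_\pm^{-1} v \,\square\, V \mR_6$ with each $\square \in \{\mR_6, \mR_7^\pm\}$. The piece with $\mR_6$ on both inner slots again yields a $z$-independent operator (modulo the $M_\pm^{-1}(z)$ dependence), and one expands $M_\pm^{-1}$ via Lemma~\ref{lem:Minverseregular}; the most singular contribution $h^\pm(z)^{-1} S$ is handled exactly as in Lemma~\ref{fromeg3} and Lemma~\ref{lem:nw12} — using $g^+(z)-g^-(z)=\tfrac i2$ and \eqref{eq:hdecays} to get $\widetilde O_2(\log^{-2} z)$ decay in the difference, so Corollary~\ref{cor:logt} gives the $t^{-1}\log^{-2} t$ bound — while the $QD_0Q$ and $E^\pm$ pieces are better behaved. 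The pieces involving $\mR_7^\pm$ on an inner slot bring in either $g^\pm(z) M_{11}$, $G_0 I_1$, or $E_0^\pm(z)$; here one uses the orthogonality $M_{11} v^* Q = Q v M_{11} = 0$ to kill the would-be $\log z$-singular cross terms (as in the proof of Lemma~\ref{lem:r1rh}), so that only $\widetilde O_2(\log^{-2} z)$ or $\widetilde O_2(z^{1/2-})$ errors remain, contributing $t^{-1}\log^{-2}t$ and $t^{-1-}$ respectively via Corollary~\ref{cor:logt} and Lemma~\ref{lem:ibp}.

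The spatial integrals are controlled using \eqref{eq:R1 useful1} for the outer $\mR_6$ factors (noting $|\mR_6(x,x_1)| \les 1 + |x-x_1|^{-1}$, which against $V$ is $L^1_{x_1}$ uniformly in $x$ by Lemma~\ref{lem:potential}), the Lemma~\ref{lem:R0exp} bounds $|\mR_0^\pm(x_1,x_2)| \les |\log z|(\la x_1-x_2\ra^{1/2} + |x_1-x_2|^{-1})$ and $|\partial_z \mR_0^\pm| \les z^{-1}(\ldots)$ for the inner resolvents, and the absolute boundedness of $S$, $QD_0Q$, and $E^\pm$. The extra power $|V| \les \la x\ra^{-5-}$ is exactly what is needed so that all three spatial integrations converge after the $\la x_1-x_2\ra^{1/2}$ and $\la \cdot \ra^{3/2}$ growth factors from the error terms are absorbed. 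I expect the main obstacle to be bookkeeping: unlike Lemma~\ref{lem:nw12}, we now have $\mR_6$ on both ends with no decay in the spectral variable, so each of the four expanded pieces must be checked to produce either a genuine $\pm$-cancellation (leading term), an orthogonality cancellation ($M_{11}$ cross terms), or an $\widetilde O_2(z^{0+})$ error — and one must verify that the resulting $x$-dependence is at worst $w(x)w(y)$ for the $\log^{-2}$ terms and at worst $\la x\ra^{3/2}\la y\ra^{3/2}$ for the $t^{-1-}$ terms. Since this lemma's stated bound has \emph{no} polynomially-weighted term, one in fact expects the surviving contributions to be purely logarithmically weighted, which should follow because every factor touching the $x$ or $y$ variable is either $\mR_6$ (harmless against $V$) or a $\log^+$-bounded kernel like $G_0 I_1$ against the decaying potential.
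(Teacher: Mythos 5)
Your overall strategy (iterate the resolvent identity to avoid the unboundedness of $\mR_6$ from $L^1\to L^2$, then split $\mR_0^\pm=\mR_6+\mR_7^\pm$ in the inner slots and use \eqref{eq:hdecays} together with the orthogonality $M_{11}v^*Q=QvM_{11}=0$) is the right one, but there is an algebra error that destroys the key cancellation. Iterating \eqref{FIT} twice gives \eqref{SIT}, i.e.
$$
\mR_6 v^*M^{-1}_{\pm} v \mR_6 = \mR_6 V \mR_6 - \mR_6 V \mR_0^{\pm} V\mR_6+\mR_6 V\mR_0^{\pm} v^* M_{\pm}^{-1} v \mR_0^{\pm} V \mR_6;
$$
you have dropped the middle term $-\mR_6 V \mR_0^{\pm}V\mR_6$ (and flipped the sign of the last). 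This middle term does \emph{not} cancel in the $\pm$ difference: by \eqref{R0dif}, $\mR_6V[\mR_0^+-\mR_0^-]V\mR_6=mi\,\mR_6VM_{11}V\mR_6+\widetilde O_2(z^{1/2})$, so by \eqref{eq:bdyterms} it contributes a genuine boundary term of size $t^{-1}$ to $I(\cdot)$.

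Moreover, that missing contribution is exactly what is needed to cancel a $t^{-1}$ term that your remaining analysis cannot avoid. In the piece of $\mR_6V\mR_0^{\pm} v^*M_{\pm}^{-1}v\mR_0^{\pm} V\mR_6$ with $\mR_7^{\pm}$ in both inner slots, the most singular contribution is $\frac{[2mg^{\pm}(z)]^2}{h^{\pm}(z)}\,\mR_6VM_{11}v^*SvM_{11}V\mR_6$, and by \eqref{eq:hdecays} the $\pm$ difference of the scalar prefactor equals $\frac{mi}{\|(a,c)\|^2}+\widetilde O_2\big((\log z)^{-2}\big)$ — it has a nonvanishing \emph{constant} part, not merely $\widetilde O_2(\log^{-2}z)$ as you assert. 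Orthogonality does not help here, since $S$ contains $P$ and $M_{11}v^*SvM_{11}=\|(a,c)\|^2M_{11}\neq 0$. This piece therefore leaves a boundary term $-\frac{me^{-imt}}{t}\mR_6VM_{11}V\mR_6$, which in the paper's proof is cancelled exactly by the boundary term $+\frac{me^{-imt}}{t}\mR_6VM_{11}V\mR_6$ coming from $-\mR_6V[\mR_0^+-\mR_0^-]V\mR_6$ — the very term your identity omits. As written, your argument can only yield $O(t^{-1})$, not $O\big(t^{-1}(\log t)^{-2}\big)$. Restoring the middle term of \eqref{SIT} and exhibiting the pairwise cancellation of these two $t^{-1}$ boundary terms repairs the proof; the spatial-integral bookkeeping you describe is otherwise in line with the paper's.
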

\begin{proof} Using the iteration formula \eqref{SIT} we  consider 
\begin{multline*}
 - \mR_6V[\mR_0^+- \mR_0^{-}] V \mR_6 +  \mR_6 V (\mR_0V M_{+}^ {-1} v\mR_0^{+}-\mR_0^{-} v^{*} M_{-}^ {-1}v\mR_0^{-}\big) V  \mR_6 =:   {\Gamma}_8^1 + \Gamma_8^2.
\end{multline*}
As in the Lemma~\ref{lem:nw12} we estimate $ {\Gamma}_8^1$ and $ \Gamma_8^2$ separately, and show that the leading order $t^{-1}$ terms cancel. 
By \eqref{R0dif} and  Lemma~\ref{lem:potential} we obtain
\begin{align*}
{\Gamma}_8^1 & = -mi \mR_6VM_{11}V \mR_6 +  \widetilde O_2(z^{1/2}) \int_{\R^4}  \mR_6 V\la x_1-y_1\ra ^{\f32}V \mR_6 dx_1 dy_1  \\
 & = -mi \mR_6VM_{11}V \mR_6 + \widetilde O_2(z^{1/2}).
\end{align*}
This gives 
$$
I( {\Gamma}_8^1) =  \frac{me^{-imt}}{t} \mR_6VM_{11}V \mR_6+ O(t^{-1-})
$$
by \eqref{eq:bdyterms} and Lemma~\ref{lem:ibp}. 

As in the proof of Lemma~\ref{lem:nw12},  using Lemma~\ref{lem:R0exp}, Lemma~\ref{lem:Minverseregular}, and recalling that $M_{11}v^*Q=QvM_{11}=0$, and \eqref{eq:hdecays},  we have
\begin{multline*}
	\Gamma_8^2 (z)(x,y)
	=\frac{mi}{\| (a,c) \| ^2} \mR_6V M_{11}v^*SvM_{11}V  \mR_6 \\ + \widetilde O_2\bigg( \frac{1}{\log^2 z}\bigg) \Omega_3(x,y) + \widetilde O_2(z^{1/2-}) \Omega_4 (x,y).
\end{multline*}
where 
\begin{multline*}
\Omega_3  =   \mR_6V M_{11}v^*SvM_{11} V \mR_6+   \mR_6V M_{11}v^*Sv \mathcal{G}_0 V \mR_6 +  \mR_6V \mathcal{G}_0 v^*SvM_{11}V \mR_6 \\
  +   \mR_6V \mathcal{G}_0 v^*Sv \mathcal{G}_0 V \mR_6,  
  \end{multline*}
 and the kernel $\Omega_4(x,y)$ is bounded by
\begin{align*}
	\int_{\R^6} r_1^{-1} |V(x_1)| \big( r_2^{-1}  + r_2^{3/2} \big) |[v^*Av]|(x_2,y_2)  \big( r_3^{-1}  + r_3^{3/2} \big) |V(y_2)| r_4^{-1}  dx_1 dx_2 dy_1 dy_2. 
\end{align*}
By Lemma~\ref{lem:potential}, one can see that $|\Omega_3 (x,y)| , |\Omega_4 (x,y) | \les 1 $ uniformly in $x$ and $y$.  So, 
\begin{align*}
	\Gamma_8^2(z)(x,y)
	=\frac{mi}{\| (a,c) \| ^2} \mR_6V M_{11}v^*SvM_{11}V \mR_6 
	    +  \widetilde O_2\bigg( \frac{1}{\log^2 z}\bigg).
\end{align*}
Thus, \eqref{eq:bdyterms} along with the equality $M_{11}v^*SvM_{11}=\| (a,c) \| ^2 M_{11}$ and 
Corollary~\ref{cor:logt} shows that
$$
	I(\Gamma_8^2) =-\frac{me^{-imt}}{t} \mR_6VM_{11}V \mR_6+ O(t^{-1}(\log t)^{-2}).
$$
Adding $   I( \Gamma_8^1)$ to $I(\Gamma_8^2)$ completes the proof.
\end{proof} 

\subsection{Large energy weighted estimates}

In this section we investigate the perturbed Dirac evolution at energies separated from the threshold. We prove the following proposition which implies Theorem~\ref{th:main2} for large energy.  In contrast to the argument in Section~\ref{sec:nonweight}, we do not localize to dyadic frequency intervals nor employ a specialized stationary phase argument.  Since the desired bound allows for a dependence on $x$ and $y$, we use the same expansions for the perturbed resolvent for the high energy argument in Section~\ref{sec:nonweight}, and obtain the desired time decay by integrating by parts sufficiently many times.
\begin{prop}\label{prop:high energy}
	Under the assumptions of Theorem~\ref{th:main2}
	we have the following bound for $t \geq1$,
	\be \label{eqn: high}
		\sup_{L\geq1} \bigg|\int_0^\infty e^{-it\sqrt{z^2+m^2}}\frac{z^{-2-} \tilde{\chi}(z)}{\sqrt{z^2+m^2}}  \chi(z/L)[\mathcal R_V^{+}(\lambda)- \mathcal R_V^{-}] (x,y)  dz\bigg|\\ \les  \frac{\la x \ra ^{\f 32} \la y \ra ^{\f32}}{t^{2}}
	\ee
	provided the components of $V$ satisfy the bound
	$|V_{ij}(x)|\les \la x\ra^{-5-}$.
\end{prop}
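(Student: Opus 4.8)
The plan is to insert the resolvent expansion \eqref{exp:resolvent} into the integral in \eqref{eqn: high} and estimate each of the three resulting pieces --- the free term $\mR_0^\pm(\lambda)$, the first Born term $-\mR_0^\pm V\mR_0^\pm$, and the iterated term $\mR_0^\pm V\mR_V^\pm V\mR_0^\pm$ --- by integrating by parts twice in $z$. On $\mathrm{supp}\,\tilde\chi$ we have $z\geq z_0$, so $\big|\partial_z\sqrt{z^2+m^2}\big|=z/\sqrt{z^2+m^2}\gtrsim1$ there, and $\sqrt{z^2+m^2}/z$, $1/\sqrt{z^2+m^2}$ and their $z$-derivatives are bounded on that set; moreover the integrand is compactly supported in $z$ (thanks to $\chi(z/L)$) and vanishes to all orders at both endpoints, so no boundary terms arise and each integration by parts produces a clean factor $t^{-1}$. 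Hence two integrations by parts reduce \eqref{eqn: high} to the bound
\[
\int_0^\infty \Big|\partial_z^k\Big[\tfrac{z^{-2-}\tilde\chi(z)\chi(z/L)}{\sqrt{z^2+m^2}}\,[\Gamma^{+}-\Gamma^{-}](z)(x,y)\Big]\Big|\,dz\ \les\ \la x\ra^{\f32}\la y\ra^{\f32},\qquad k=0,1,2,
\]
uniformly in $L\geq1$ and in the choice of piece $\Gamma^\pm$. Since the amplitude obeys $\big|\partial_z^\ell[z^{-2-}\tilde\chi(z)\chi(z/L)/\sqrt{z^2+m^2}]\big|\les z^{-3-\ell-}$ on its support (uniformly in $L$), the Leibniz rule further reduces everything to the pointwise estimate $\big|\partial_z^j[\Gamma^{+}-\Gamma^{-}](z)(x,y)\big|\les z^{2}\la x\ra^{\f32}\la y\ra^{\f32}$ for $j=0,1,2$ and $z\geq z_0$: indeed $\int_{z_0}^\infty z^{-3-(2-j)-}\,z^{2}\,dz<\infty$ for $j\leq2$, which finally produces the claimed $t^{-2}\la x\ra^{3/2}\la y\ra^{3/2}$ for all $t\geq1$. (Note the $z^{-2-}$ here plays the role that restriction to dyadic levels played in Section~\ref{subnonhigh}.)

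For the free term, the difference $[\mR_0^{+}-\mR_0^{-}](\lambda)(x,y)=\big(D_m+\sqrt{z^2+m^2}\,\big)\tfrac i2 J_0(z|x-y|)$ has no singularity at $x=y$ --- the small-argument cancellation $J_0'(u)\sim-u/2$ removes the would-be $|x-y|^{-1}$ contribution --- and by \eqref{largr} its first two $z$-derivatives are $O\big(z^{1/2}\la x-y\ra^{3/2}\big)$ for $z\gtrsim1$, which against the $z^{-3-}$ amplitude is comfortably enough and yields the $\la x-y\ra^{3/2}$ contribution. For the first Born term, write $\mR_0^\pm=\mR_L^\pm+\mR_H^\pm$ as in \eqref{eqRL}--\eqref{eqRH}; the interior singularities ($|x-x_1|^{-1}$ coming from $\mR_L^\pm$) are locally integrable in $\R^2$, each $\mR_H^\pm$ factor is $O(z)$ pointwise, and Lemma~\ref{lem:potential} together with the decay $|V_{ij}|\les\la x\ra^{-5-}$ gives $\big|\partial_z^j[\mR_0^{+}V\mR_0^{+}-\mR_0^{-}V\mR_0^{-}](x,y)\big|\les z^{2}\la x\ra^{3/2}\la y\ra^{3/2}$.

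The iterated term $\mR_0^\pm V\mR_V^\pm V\mR_0^\pm$ is the crux, and here we reuse the high-energy machinery of Section~\ref{sec:nonweight}. Decomposing the outer resolvents as $\mR_L^\pm+\mR_H^\pm$: where an outer factor is $\mR_H^\pm$ (bounded and smooth in its spatial variables) we pair directly against the limiting absorption bound \eqref{eqn:lap}, obtaining by Cauchy--Schwarz in the interior variables
\[
\big|[\partial_z^{k_1}\mR_H^{\pm}]V[\partial_z^{k_2}\mR_V^{\pm}]V[\partial_z^{k_3}\mR_H^{\pm}](x,y)\big|\les\big\|[\partial_z^{k_1}\mR_H^{\pm}](x,\cdot)V\big\|_{L^{2,\sigma}}\big\|\partial_z^{k_2}\mR_V^{\pm}\big\|_{L^{2,\sigma}\to L^{2,-\sigma}}\big\|V[\partial_z^{k_3}\mR_H^{\pm}](\cdot,y)\big\|_{L^{2,\sigma}},
\]
and by \eqref{eqRH} and $|V_{ij}|\les\la x\ra^{-5-}$ each weighted norm is $\les z\,\la x\ra^{\max(0,\,k_1-1/2)}$ (resp. with $y$) for admissible $\sigma$, while $\big\|\partial_z^{k_2}\mR_V^{\pm}\big\|_{L^{2,\sigma}\to L^{2,-\sigma}}\les1$ whenever $\sigma>\tfrac12+k_2$. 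Since $k_1+k_2+k_3\leq2$ --- so that $k_1=2$ or $k_3=2$ forces $k_2=0$ --- a single $\sigma\in\big(\tfrac12+k_2,\,4\big)$ is admissible in each case and the product is $\les z^{2}\la x\ra^{3/2}\la y\ra^{3/2}$; it is precisely the worst case $k_2=2$ (requiring $\sigma>\tfrac52$ in \eqref{eqn:lap}) together with the upper bound $\sigma<4$ forced by $\la x_1\ra^{\sigma}V(x_1)\in L^2$ that necessitates the hypothesis $\delta>5$. When an outer factor is $\mR_L^\pm$ we cannot pair it directly --- it is not locally square integrable --- so we iterate the resolvent identity once more to move $\mR_V^\pm$ inward past a smoothing $\mR_0^\pm V$, exactly as in Lemma~\ref{lem:high2} (e.g.\ $\mR_L^{\pm}V\mR_V^{\pm}=\mR_L^{\pm}V\mR_0^{\pm}-\mR_L^{\pm}V\mR_0^{\pm}V\mR_V^{\pm}$); the terms with no $\mR_V^\pm$ are handled by integration by parts using only \eqref{eqRL}--\eqref{eqRH} and Lemma~\ref{lem:potential}, while in the remaining term the composite $\mR_L^{\pm}V\mR_0^{\pm}(x,\cdot)V$ has had its singularities integrated out and lies in $L^{2,\sigma}$ by Lemma~\ref{lem:potential}. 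In every Leibniz distribution each differentiation of an $\mR_H^\pm$ costs at most one power of $\la x-x_1\ra$ (hence of $\la x\ra$ or $\la y\ra$) and no growth in $z$ beyond $z^{1/2}$, so the outcome always stays within $z^{2}\la x\ra^{3/2}\la y\ra^{3/2}$.

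The main obstacle is exactly this last interplay: the outer $\mR_L^\pm$ factors are not locally $L^2$, so the clean pairing against \eqref{eqn:lap} must be preceded by an extra resolvent iteration, and one must then track the $z$-powers and the spatial weights through the Leibniz rule carefully enough that (i) the $z$-integral converges at infinity with \emph{no} dyadic decomposition --- here the genuine $z^{-2-}$ decay of the amplitude is essential, in contrast to the $z^{+1}$ amplitude of Section~\ref{subnonhigh} which forced the dyadic stationary-phase argument there --- and (ii) the emerging spatial weight is no worse than $\la x\ra^{3/2}\la y\ra^{3/2}$; the threshold $\delta>5$ is sharp both for (ii) and for the second-order limiting absorption estimate appearing in the Leibniz term with both $z$-derivatives on $\mR_V^\pm$.
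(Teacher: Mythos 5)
Your proposal is correct and follows essentially the same route as the paper: insert the expansion \eqref{exp:resolvent}, integrate by parts twice in $z$ (no boundary terms, no dyadic or stationary-phase machinery, with the $z^{-2-}$ amplitude supplying convergence uniformly in $L$), bound the free and single Born terms via the high-energy kernel estimates and Lemma~\ref{lem:potential2}, and for the term containing $\mR_V^{\pm}$ iterate the resolvent identity until weighted-$L^2$ composites flank $\mR_V^{\pm}$ and then invoke the derivative limiting absorption principle \eqref{eqn:lap} with $k\le 2$. The only minor difference is bookkeeping: in Lemma~\ref{lem:wtdhigh2} the paper does not split the outer resolvents into $\mR_L^{\pm}+\mR_H^{\pm}$ but iterates symmetrically to $\mR_0V\mR_0V\mR_VV\mR_0V\mR_0$, accepting $\la x\ra^{3/2}\la y\ra^{3/2}$ growth throughout (your hybrid treatment is closer in spirit to Remark~\ref{rmk:wtdhigh}); just note that the $\mR_L^{\pm}V\mR_V^{\pm}V\mR_L^{\pm}$ configuration needs the extra iteration on \emph{both} sides, as in Lemma~\ref{lem:high1}, which your one-sided example leaves implicit.
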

 As in previous sections we will estimate the contribution of the terms appearing in  the resolvent expansion \eqref{exp:resolvent} to \eqref{eqn: high} in a series of lemmas. For the convenience of the reader we recall \eqref{exp:resolvent}:
$$
		\mathcal R_V^{\pm}(\lambda) =  \mathcal R_0^{\pm}(\lambda)- \mathcal R_0^{\pm}(\lambda) V\mathcal R_0^{\pm}(\lambda)  + \mathcal R_0^{\pm}(\lambda)  V\mR_V^{\pm} (\lambda) V    \mR_0^{\pm}(\lambda).
$$ 
 We first note that the contribution of the first term in \eqref{exp:resolvent} can be handled similarly to Lemma~\ref{lem:free}. Specifically, we consider 
$$
	\int_0^\infty e^{-it\sqrt{z^2+m^2}} \frac{z}{\sqrt{z^2+m^2}} \mathcal E(z)\, dz
$$ 
with
 $\mathcal{E}(z) =z^{-3-} \tilde\chi(z) [\mR^{+}_0- \mR^{-}_0](z)(x,y)$.  The ample $z$ decay allows us to take $L=\infty$ for the cut-off $\chi(z/L)$.  By \eqref{R0dif} we have 
$$
   z^{-3-}\tilde\chi(z)  [\mR^{+}_0- \mR^{-}_0](z)(x,y)  = z^{-3-} \tilde\chi(z)  mi M_{11}+  \tilde\chi(z)  \widetilde O_2(z^{-\f52-} \la x \ra ^{3/2} \la y \ra ^{3/2}) 
$$
 Since the cut-off functions in $\mathcal E(z)$ are not supported at zero we  can integrate by parts twice without boundary terms and bound the contribution by $\frac{ \la x \ra ^{3/2} \la y \ra ^{3/2}}{t^{2}}$.
 
We next consider the contribution of the second and third terms in \eqref{exp:resolvent}. We start with the second term. Recalling $\mR_0^\pm= \mR_L^\pm +\mR_H^\pm $, it suffices to consider the contributions of 
$$\Gamma_1:=(\mR^+_L-\mR^-_L)V\mR_L^+,\,\,\,\,\,\Gamma_2:=\mR^+_LV\mR^+_H,\,\,\,\,\,\,\Gamma_3:=\mR^+_HV\mR^+_H, $$
that arise when substituting \eqref{exp:resolvent} into \eqref{eqn: high}.
\begin{lemma}\label{lem:first three}
Under the assumptions of Theorem~\ref{th:main2} the following bound holds for each $k=1,2,3$.
\begin{align}
\sup_{L\geq1} \bigg|\int_0^\infty e^{-it\sqrt{z^2+m^2}}\frac{z^{-2-} \tilde{\chi}(z)}{\sqrt{z^2+m^2}}  \chi(z/L) \Gamma_k (x,y)  dz\bigg|  \les  \frac{\la x \ra ^{\f 32} \la y \ra ^{\f32}}{t^{2}}.
\end{align}
\end{lemma}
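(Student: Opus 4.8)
The plan is to extract the decay factor $t^{-2}$ by integrating by parts twice in $z$, and then to check that the remaining $z$-integral converges, uniformly in $L\ge 1$, with a bound $\les\la x\ra^{3/2}\la y\ra^{3/2}$. Since $\tilde\chi(z)$ vanishes in a neighbourhood of $z=0$ and $\chi(z/L)$ is compactly supported, while $\partial_z e^{-it\sqrt{z^2+m^2}}=-it\,\tfrac{z}{\sqrt{z^2+m^2}}e^{-it\sqrt{z^2+m^2}}$, two integrations by parts produce no boundary terms and give
$$\int_0^\infty e^{-it\sqrt{z^2+m^2}}\frac{z^{-2-}\tilde\chi(z)}{\sqrt{z^2+m^2}}\chi(z/L)\,\Gamma_k(z)(x,y)\,dz=\frac1{t^2}\int_0^\infty e^{-it\sqrt{z^2+m^2}}\,\partial_z\Big[\tfrac{\sqrt{z^2+m^2}}{z}\,\partial_z\big(z^{-3-}\tilde\chi(z)\chi(z/L)\,\Gamma_k(z)(x,y)\big)\Big]\,dz.$$
On $\mathrm{supp}\,\tilde\chi$ one has $z\gtrsim z_0$, so $\tfrac{\sqrt{z^2+m^2}}{z}$ and all of its $z$-derivatives are $O(1)$; expanding by Leibniz, each resulting term is bounded by $t^{-2}$ times a function of $z$ supported in $z\gtrsim z_0$ and dominated by $z^{-3-}\,|\partial_z^j\Gamma_k(z)(x,y)|$ for some $j\le 2$, possibly carrying an extra localizing factor $L^{-1}|\chi'(z/L)|$ or a derivative of $\tilde\chi$. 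Hence it suffices to show, for $j=0,1,2$,
$$\int_{z_0}^\infty z^{-3-}\,\big|\partial_z^j\Gamma_k(z)(x,y)\big|\,dz\ \les\ \la x\ra^{3/2}\la y\ra^{3/2},$$
the uniformity in $L$ then being automatic since $\chi(z/L)\le 1$ and the $L^{-1}|\chi'(z/L)|$ pieces are handled the same way. This is the scheme already used in the proof of Lemma~\ref{lem:free}, now with a potential and two outer resolvents and while tracking spatial weights; in contrast with Section~\ref{sec:nonweight}, no stationary phase estimate or dyadic localization is needed, because the bound may depend on $x$ and $y$.

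For the pointwise input I would record, for $z\gtrsim z_0$ and $j=0,1,2$, the following consequences of \eqref{eqRL}, \eqref{eqRH}, and Remark~\ref{rmk:rdif}: $|\partial_z^j\mathcal R_L^\pm(z)(x,y)|\les|x-y|^{-1}$, these kernels being supported in $z|x-y|\les 1$ (the leading part of $\mathcal R_L^\pm$ is $z$-independent, and a derivative hitting the cutoff costs a factor $|x-y|$ but is supported where $z|x-y|\approx 1$); $|\partial_z^j\big([\mathcal R_L^+-\mathcal R_L^-](z)(x,y)\big)|\les z^{1/2}$, again supported in $z|x-y|\les 1$, on which $|x-y|\les 1$; $|\mathcal R_H^\pm(z)(x,y)|\les z$; and, for $j=1,2$, $|\partial_z^j\mathcal R_H^\pm(z)(x,y)|\les z^{1/2}\la x-y\ra^{\,j-\frac12}\les z^{1/2}\la x-y\ra^{3/2}$, the growth $\la x-y\ra^{j}$ coming from the $z$-derivatives falling on the phase $e^{\pm iz|x-y|}$ and being partly compensated by the $(1+z|x-y|)^{-\frac12}$ decay of $\widetilde w_\pm$.

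With these in hand the three cases are routine. For $\Gamma_1=(\mathcal R_L^+-\mathcal R_L^-)V\mathcal R_L^+$ the resolvent kernels confine the internal variable $x_1$ to $\{|x-x_1|\les 1\}\cap\{|x_1-y|\les 1\}$, so $\la x-x_1\ra\approx 1$, the local singularity $|x_1-y|^{-1}$ is integrable, and the only $z$-growth ($z^{1/2}$, from the difference) is more than offset by $z^{-3-}$; this gives $\int_{z_0}^\infty z^{-3-}|\partial_z^j\Gamma_1|\,dz\les 1$. For $\Gamma_2=\mathcal R_L^+V\mathcal R_H^+$ (the term with $\mathcal R_L^-,\mathcal R_H^-$ being handled identically, no cancellation being needed here) the factor $\mathcal R_L^+$ again localizes $x_1$ near $x$, so that $|\partial_z^j\Gamma_2(x,y)|\les z\int|x-x_1|^{-1}\,\la x_1\ra^{-5-}\,\la x_1-y\ra^{3/2}\,dx_1\les z\,\la y\ra^{3/2}$ (using $\la x_1-y\ra^{3/2}\les\la x_1\ra^{3/2}\la y\ra^{3/2}$ and Lemma~\ref{lem:potential}), and then $\int_{z_0}^\infty z^{-3-}\cdot z\,\la y\ra^{3/2}\,dz\les\la y\ra^{3/2}$. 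For $\Gamma_3=\mathcal R_H^+V\mathcal R_H^+$, Leibniz together with the bounds above gives $|\partial_z^j\Gamma_3(x,y)|\les z^{2}\int\la x-x_1\ra^{3/2}\,|V(x_1)|\,\la x_1-y\ra^{3/2}\,dx_1$, and since $\la x-x_1\ra^{3/2}\la x_1-y\ra^{3/2}\les\la x\ra^{3/2}\la y\ra^{3/2}\la x_1\ra^{3}$ while $|V(x_1)|\les\la x_1\ra^{-5-}$, the $x_1$-integral is $\les\la x\ra^{3/2}\la y\ra^{3/2}$, after which $\int_{z_0}^\infty z^{-3-}\cdot z^2\,dz\les 1$ closes the estimate. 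The one point that genuinely uses the hypothesis $\delta>5$ is $\Gamma_3$: there neither outer resolvent is short-range, so two $z$-derivatives distributed across the two phases $e^{\pm iz|x-x_1|}$ and $e^{\pm iz|x_1-y|}$ generate the full spatial growth $\la x-x_1\ra^{3/2}\la x_1-y\ra^{3/2}$, which already forces $|V|$ to decay faster than $\la\cdot\ra^{-3}$ just to produce an integrable kernel, with a little extra room for Lemma~\ref{lem:potential}. I expect verifying exactly this weighted convergence --- and the analogous, simpler ones for $\Gamma_1$ and $\Gamma_2$ --- to be the only mildly delicate part; everything else is bookkeeping of the type already carried out in Sections~\ref{sec:nonweight} and \ref{sec:weighted}.
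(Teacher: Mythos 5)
Your proposal is correct and follows essentially the same route as the paper's proof: two integrations by parts (no boundary terms thanks to $\tilde\chi$ and $\chi(\cdot/L)$), pointwise kernel bounds for $\mR_L^\pm$ and $\mR_H^\pm$ valid for $z\gtrsim 1$, the cancellation of the $z$-independent singular part in $\mR_L^+-\mR_L^-$ for $\Gamma_1$, and Lemma~\ref{lem:potential2} for the spatial integrals, with the growth in $z$ (at most $z^2$) absorbed by $z^{-3-}$. One small correction: Remark~\ref{rmk:rdif} is a low-energy ($0<z\ll 1$) expansion and cannot be cited here; for $z\gtrsim 1$ one has $[\mR_L^+-\mR_L^-](z)(x,x_1)=\chi(z|x-x_1|)\big(-i\alpha\cdot\nabla+m\beta+\sqrt{z^2+m^2}\big)\tfrac{i}{2}J_0(z|x-x_1|)$, which is of size $O(z)$ (from the $\sqrt{z^2+m^2}\,J_0$ term), not $O(z^{1/2})$ --- a harmless overstatement, since $z^{-3-}\cdot z$ is still integrable on $(z_0,\infty)$, in line with the paper's own crude $\widetilde O_2(z^{1/2+})$ bookkeeping for $\Gamma_1$.
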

\begin{proof} For each $\Gamma_j$ we will integrate by parts twice. We start with $\Gamma_1$. By the relationship \eqref{eq:Dmpm}, Lemma~\ref{lem:schro_resol} and the fact that $\sqrt{z^2+m^2}=\widetilde O(z)$ when $z\gtrsim 1$, one has
\begin{align}\label{eq:secondrl}
	\mR_L^\pm(z)(x,x_1) = \frac{i}{2\pi} \frac{\alpha \cdot (x-x_1)}{|x-x_1|^2}+  \widetilde O_2( z(z|x-y|^{-1+})).
\end{align}
Hence, by Lemma~\ref{lem:potential2}, 
\begin{align*}
	\Gamma_1(z)(x,y) = \widetilde O_2(z^{\f12+})  \int _{\R^2} \frac{\la x_1 \ra^{-5-}} {|x-x_1|^{1-}|x_1-y|} dx_1  = \widetilde O_2( z^{\f12+}).  
\end{align*}
By integration by parts twice, noting the lack of boundary terms due to the cut-offs,   we obtain
\begin{multline} \label{tibp}
\bigg| \int_0^\infty e^{-it\sqrt{z^2+m^2}}\frac{z^{-2-} \tilde{\chi}(z)}{\sqrt{z^2+m^2}}  \chi(z/L) \Gamma_1 (x,y)  dz \bigg|  \\
 \les {\f 1 {t^2}} \int_{z_0}^{\infty} \big| \partial_z \Big\{\frac{\sqrt{z^2+m^2}}{z} \partial_z\{z^{-3-}\Gamma_1(z)\}(x,y)  \Big\}  \big| dz \les \frac{ \la y \ra ^{3/2}}{t^2}.
\end{multline}
For $ \Gamma_2 $ and $ \Gamma_3 $ we note that, using $
\mR_H^\pm(z)(x,y) = e^{\pm i z|x-y|}\widetilde w_\pm(z|x-y|)$, one has for $z\gtrsim1$ 
\begin{align}\label{eq:secondrh}
 |\partial_z^k \{\mR_H^\pm(z)(x,y)\} | \les z^{1/2} (|x-x_1|^{-{1/2}} + |x-x_1|^{3/2}) \text { for } k=0,1,2.
\end{align}
Therefore, Lemma~\ref{lem:potential2} together with \eqref{eq:secondrh} and \eqref{eq:secondrl} gives  ($r_1= |x-x_1|$, $r_2= |x_1-y|$)
\begin{align*}
 |\partial_z^k \{\Gamma_2(z)(x,y)\} | & \les z^{\f12}  \int _{\R^2} \frac{ r_1^{-1} ( r_2^{-{1/2}} + r_2^{3/2})}{ \la x_1 \ra^{5+}} dx_1 \les z^{\f12} \la y \ra ^{3/2}, \\
 |\partial_z^k \{\Gamma_3 (z)(x,y)\} |&  \les z  \int _{\R^2} \frac{( r_1^{-{1/2}} +r_1^{3/2}) ( r_2^{-{1/2}} + r_2^{3/2})}{ \la x_1 \ra^{5+}} dx_1 \les z \la x \ra ^{3/2} \la y \ra ^{3/2},
\end{align*}
for $k=0,1,2$. Integration by parts twice gives the statement as in \eqref{tibp}. 
\end{proof} 

\begin{lemma}\label{lem:wtdhigh2}
	Under the assumptions of Theorem~\ref{th:main2} we have
	\begin{align*} 
	\sup_{L\geq1} \bigg|\int_0^\infty e^{-it\sqrt{z^2+m^2}}\frac{z^{-2-} \tilde{\chi}(z)}{\sqrt{z^2+m^2}}  \chi(\lambda/L) [\mathcal R_0^{\pm} V \mathcal R_V^{\pm}V    \mathcal R_0^{\pm}](x,y)  dz\bigg|  \les  \frac{\la x \ra ^{\f 32} \la y \ra ^{\f32}}{t^{2}}.
	\end{align*}
\end{lemma}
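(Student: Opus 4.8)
The plan is to follow the template of Lemma~\ref{lem:high2} and Lemma~\ref{lem:first three}, but to replace the stationary phase input with two integrations by parts: the extra $z^{-2-}$ weight in the measure together with the absence of a cut-off at the origin gives us a two-derivative budget, which converts into the target decay $t^{-2}$. First I would decompose each of the two outer resolvents as $\mR_0^\pm=\mR_L^\pm+\mR_H^\pm$ as in \eqref{eqRL}--\eqref{eqRH}, and iterate the resolvent identity for the inner factor twice, as in the proof of Lemma~\ref{lem:high1}, writing
$$\mR_V^\pm=\mR_0^\pm-\mR_0^\pm V\mR_0^\pm+\mR_0^\pm V\mR_V^\pm V\mR_0^\pm.$$
This reduces $\mR_0^\pm V\mR_V^\pm V\mR_0^\pm$ to a finite sum of terms built only from $\mR_L^\pm$, $\mR_H^\pm$ and $V$, plus the single symmetric term $\mR_0^\pm V\mR_0^\pm V\mR_V^\pm V\mR_0^\pm V\mR_0^\pm$.

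For each purely free term I would insert the pointwise kernel bounds \eqref{eqRL}, \eqref{eqRH}, \eqref{eq:R0 high2}, \eqref{eq:secondrl}, \eqref{eq:secondrh} — recalling that at energies $z\gtrsim1$ a free resolvent and its first two $z$-derivatives are of size $z^{1/2}(r^{-1/2}+r^{3/2})$ with $r$ the relevant spatial difference — and carry out the spatial convolutions using Lemma~\ref{lem:potential2} and $|V_{ij}(x)|\les\la x\ra^{-5-}$. This yields an amplitude $a(z,x,y)$ with $|\partial_z^k a(z,x,y)|\les z^{2}\la x\ra^{3/2}\la y\ra^{3/2}$ for $k=0,1,2$ on $\mathrm{supp}\,\widetilde\chi$, so $z^{-3-}$ times this amplitude is integrable. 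Multiplying by the full measure, using $|\partial_z^j\chi(\lambda/L)|\les z^{-j}$ uniformly in $L\ge1$ and the identity $e^{-it\sqrt{z^2+m^2}}=\frac{\sqrt{z^2+m^2}}{-itz}\partial_z e^{-it\sqrt{z^2+m^2}}$, I would integrate by parts twice; the cut-offs kill every boundary term and the resulting integral is bounded by $t^{-2}\la x\ra^{3/2}\la y\ra^{3/2}$ exactly as in \eqref{tibp}, uniformly in $L$.

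For the symmetric term I would again split each outer $\mR_0^\pm$ into $\mR_L^\pm+\mR_H^\pm$ and invoke the limiting absorption principle \eqref{eqn:lap}, which by \cite{egg} and the resolvent identity gives $\|\partial_z^j\mR_V^\pm(\lambda)\|_{L^{2,\sigma}\to L^{2,-\sigma}}\les1$ for $\sigma>\frac12+j$, $j=0,1,2$. The kernel bounds above and Lemma~\ref{lem:potential2} should give $\|\partial_z^j[\mR_0^\pm V\mR_0^\pm V](x,\cdot)\|_{L^{2,\sigma}}\les z\,\la x\ra^{3/2}$ for $j=0,1,2$, and the mirror bound on the right end; here $\delta>5$ enters precisely so that the weight $\la\cdot\ra^\sigma$ with $\sigma$ slightly above $5/2$ is absorbed by $V$. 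Distributing at most two derivatives over the sandwich by the Leibniz rule, derivatives landing on the two outer $\mR_H^\pm$ trade only for the polynomial phase growth that fits inside $\la x\ra^{3/2}\la y\ra^{3/2}$ (cf.\ \eqref{eq:secondrh}), derivatives on $\mR_V^\pm$ are handled by \eqref{eqn:lap} with a slightly larger $\sigma$, and derivatives on the cut-offs cost $z^{-1}$ uniformly in $L$; altogether $z^{-3-}$ times the amplitude is integrable, and two integrations by parts again give the bound $t^{-2}\la x\ra^{3/2}\la y\ra^{3/2}$, uniformly in $L$.

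The main obstacle is the bookkeeping. Because the free Dirac resolvent does not decay in the spectral parameter and its $z$-derivatives gain no power of $z$, one has exactly a two-derivative budget from the $z^{-2-}$ weight against a $t^{-2}$ target, so one must verify carefully that each derivative falling on an oscillatory factor $\mR_H^\pm$ trades only for spatial polynomial growth that fits inside $\la x\ra^{3/2}\la y\ra^{3/2}$, that the $z$-integral converges absolutely, and that all constants are uniform in $L$. Unlike the low-energy arguments there is no $\pm$ cancellation to exploit — it is purely ``integrate by parts twice and estimate'' — and the hypothesis $\delta>5$ is used precisely to furnish enough decay in $V$ to absorb simultaneously the $\sigma>5/2$ weight required for the $k=2$ limiting absorption bound and the $\la x\ra^{3/2}$, $\la y\ra^{3/2}$ losses coming from the phases.
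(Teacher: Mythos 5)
Your proposal is correct and follows essentially the same route as the paper: iterate the symmetric resolvent identity to reduce to $\mR_0V\mR_0V\mR_0$, $\mR_0V\mR_0V\mR_0V\mR_0$, and the sandwich $\mR_0V\mR_0V\mR_V V\mR_0V\mR_0$, bound kernels and their first two $z$-derivatives via \eqref{eq:secondrl}--\eqref{eq:secondrh} and Lemma~\ref{lem:potential2}, control the middle factor with the limiting absorption principle \eqref{eqn:lap} for $k=0,1,2$, and integrate by parts twice with no boundary terms, uniformly in $L$. The only cosmetic differences are that the paper skips the $\mR_L+\mR_H$ splitting of the outer resolvents by using the combined bound \eqref{secondr0} (which is $z^{1/2}(r^{-1}+r^{3/2})$, not $z^{1/2}(r^{-1/2}+r^{3/2})$ as in your parenthetical, though this does not affect convergence), and it records the sharper sizes $z^{3/2}$ and $z^{2}$ for the free terms.
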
 
\begin{proof} We drop the $\pm$ signs in this proof.
By  the resolvent identity we have
$$
\mR_0 V \mR_V V\mR_0= \mR_0 V \mR_0 V\mR_0 - \mR_0 V\mR_0 V \mR_0 V\mR_0 +  \mR_0 V\mR_0 V\mR_V V  \mR_0 V\mR_0.
$$
The contribution of the first two terms to \eqref{eqn: high} can be estimated by $\la x \ra ^{3/2} \la y \ra ^{3/2} t^{-2}$ as in the Lemma~\ref{lem:first three}, noticing that by  \eqref{eq:secondrl}  and \eqref{eq:secondrh}  one has 
\begin{align} \label{secondr0}
|\partial_z^k \{\mR_0 (z)(x,y)\} | \les z^{1/2} (|x-x_1|^{-1} + |x-x_1|^{3/2}) \text { for $ k=0,1,2 $ },
\end{align}
for $z\gtrsim1$. 
Using these bounds in Lemma~\ref{lem:potential2} with $r_1= |x-x_1|$, $r_2= |x_1-y_1|$, $r_3= |y_1-y|$, we obtain for $k=0,1,2$
\begin{multline*}
 |\partial_z^k \{[\mR_0 V \mR_0 V\mR_0](z)(x,y)\} | \les z^{3/2}  \int _{\R^4} \frac{( r_1^{-1} +r_1^{3/2}) ( r_2^{-1} + r_2^{3/2})( r_3^{-1} + r_3^{3/2})}{ \la x_1 \ra^{5+}\la y_1 \ra^{5+}} dx_1 dy_1 \\ 
  \les z^{3/2} \la x \ra ^{3/2} \la y \ra ^{3/2}, 
   \end{multline*}
Similarly (with $r_1= |x-x_1|$, $r_2= |x_1-x_2|$, $r_3= |x_2-y_1|$, $r_4= |y_1-y|$.)
$$
 |\partial_z^k \{[\mR_0 V \mR_0 V\mR_0V\mR_0](z)(x,y)\} | 
  \les z^2 \la x \ra ^{3/2} \la y \ra ^{3/2}. 
$$
Hence, integration by parts twice establishes the desired bound. 

  Finally, we will prove the statement for the term containing perturbed resolvent and establish Proposition~\ref{prop:high energy} . In order to control this term we recall \eqref{eqn:lap},  
$$
\| \partial_z^k \mR _V(z) \|_{ L^{2,\sigma}\rightarrow L^{2,-\sigma} } \les 1, \,\,\, \sigma > {\f 12}+k, \,\, k=0,1,2,
$$
for $z\gtrsim 1$.

Using the bound \eqref{secondr0} in Lemma~\ref{lem:potential}  one can obtain
\begin{multline*}
\| \partial_z^k \{[\mathcal R_0  V\mathcal R_0 V](x,x_2) \} \|_{L^2_{x_2}} \\ \les z  \Big \| V(x_2) \int _{\R^2} \frac{ (r_1^{-1} + r_1^{3/2})   (r_2^{-1} + r_2^{3/2})}{ \la x_1 \ra ^{5+} } dx_1\Big\|_{L^{2,\sigma}_{x_2}}  \les z \la x \ra^{3/2},  
\end{multline*}
where $k=0,1,2$, $\sigma = \f32+ $, $r_1=|x-x_1|$ and $r_2=|x_1-x_2|$. Therefore, limiting absorption principle gives (with $k_j\geq 0$ and $k_1+k_2+k_3=0,1,2$)
\begin{align*}
| [\partial_z^{k_1} \{\mathcal R_0  V\mathcal R_0 \} V  \partial_z^{k_2} \{ \mathcal R_V \}V \partial_z^{k_1}\{ \mathcal R_0 V \mathcal R_0 \}](x,y) | \les z^2 \la x \ra ^{3/2}\la y \ra ^{3/2},
\end{align*}
Integrating by parts as in \eqref{tibp} finishes the proof. 
\end{proof}

\begin{rmk}\label{rmk:wtdhigh}
	
	Since our goal is to use the bound we have just proven in Theorem~\ref{th:main2} and interpolate with Theorem~\ref{th:main1}, we need not pursue optimal smoothness of the initial data.  We note that, 
	one can prove a bound that is sharper with respect to derivative loss but with larger spatial weights.  This may be achieved by writing $\mR_0=\mR_L+\mR_H$ and iterating resolvent identities only $\mR_L^\pm$ in the proof of Lemma~\ref{lem:wtdhigh2} as in the unweighted bound of Proposition~\ref{prop:2highgamma}.  In particular, under the hypotheses of Theorem~\ref{th:main}, 
	\begin{align*}
	\big\| \la \cdot \ra ^{-\f32} e^{-itH} P_{ac}(H)  \la H \ra ^{-2-} f \big\|_{L^{\infty}(\R^2)} \les \frac{1}{ |t| \log^2 |t|} \big\| \la \cdot \ra^{\f32}  f \big\|_{L^1(\R^2)}, \,\,\ |t|>2.
	\end{align*} 
	
\end{rmk}

\section{Spatial bounds and stationary phase estimates}\label{sec:tech}
In this section we state several technical lemmas that were used throughout the paper. 
\begin{lemma} \label{lem:potential} Let $\beta >\max\{2,2p+2\}$. Then we have
$$
\Big \| \int _{\R^2} (1+|x-x_1|^{-1}) \la x_1 \ra ^{-\beta} |x_1 - x_2|^{p} \la x_2\ra ^{-\beta/2} dx_1\Big\|_{L^2_{x_2}} \les 1
$$
for $p\geq -1$. 
\end{lemma}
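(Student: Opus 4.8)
The plan is to bound the inner integral pointwise in $x_2$, uniformly in $x$, by a function of $x_2$ that lies in $L^2(\R^2)$, splitting the $x_1$-integration at $|x_1-x_2|=1$. The basic input is the auxiliary estimate $\sup_{x}\int_{\R^2}(1+|x-x_1|^{-1})\la x_1\ra^{-\beta+s}\,dx_1\les 1$, valid for $0\le s<\beta-2$, which I would prove by splitting at $|x-x_1|=1$: near the pole one has $\la x_1\ra^{-\beta+s}\le1$ (since $\beta-s>0$ and $\la\cdot\ra\ge1$) while $\int_{|u|<1}|u|^{-1}\,du<\infty$ in $\R^2$, and away from the pole $|x-x_1|^{-1}\le1$ and $\beta-s>2$ gives decay at infinity. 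Writing $p_+:=\max(p,0)$, the hypothesis $\beta>\max\{2,2p+2\}=2p_++2$ makes this applicable with $s=p_+$.

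First I would handle the far region $|x_1-x_2|\ge1$, where $|x_1-x_2|^p\le|x_1-x_2|^{p_+}\les\la x_1\ra^{p_+}\la x_2\ra^{p_+}$; the auxiliary estimate with $s=p_+$ then bounds this part of the inner integral by $\les\la x_2\ra^{p_+}$, and after multiplying by $\la x_2\ra^{-\beta/2}$ one gets $\la x_2\ra^{-\beta/2+p_+}$, which lies in $L^2(\R^2)$ precisely because $\beta>2p_++2$. On the near region $|x_1-x_2|<1$ one has $\la x_1\ra\approx\la x_2\ra$, hence $\la x_1\ra^{-\beta}\les\la x_2\ra^{-\beta}$, and since $p\ge-1$ we may use $|x_1-x_2|^p\le1+|x_1-x_2|^{-1}$ there; this reduces matters to controlling $\int_{|x_1-x_2|<1}(1+|x-x_1|^{-1})(1+|x_1-x_2|^{-1})\,dx_1$.

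The one delicate point is the genuinely doubly-singular term $\int_{|x_1-x_2|<1}|x-x_1|^{-1}|x_1-x_2|^{-1}\,dx_1$ that arises here: neither factor is square-integrable near its pole, so H\"older does not close, and I would instead decompose the region into $\{|x_1-x|\lesssim|x-x_2|\}$, $\{|x_1-x_2|\lesssim|x-x_2|\}$ and the complement, obtaining the bound $\les 1+\log^-|x-x_2|$ (and simply $\les 1$ when $p>-1$ strictly); the other three terms in the expansion are $\les 1$ uniformly. Hence the near-region contribution to the full integrand is $\les\la x_2\ra^{-3\beta/2}\big(1+\log^-|x-x_2|\big)$, whose $L^2_{x_2}(\R^2)$ norm is $\les 1$ uniformly in $x$ because $\log^-$ is locally square-integrable, $\la x\ra^{-3\beta/2}\les1$ on the support of $\log^-|x-x_2|$, and $\la x_2\ra^{-3\beta/2}\in L^2(\R^2)$. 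Adding the far-region and near-region bounds gives the claim.
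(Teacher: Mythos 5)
Your argument is correct, and it reaches the conclusion by a genuinely different and more self-contained route than the paper. The paper's proof is a quick application of the quoted bound in Lemma~\ref{lem:potential2} (from \cite{EG1}): for $-1<p\leq 0$ that lemma is applied directly in the $x_1$ integral, the endpoint $p=-1$ is handled by the splitting $\frac{1}{|x-x_1|\,|x_1-x_2|}\les\frac{1}{|x-x_1|}\big(\frac{1}{|x-x_1|^{1-}}+\frac{1}{|x_1-x_2|^{1+}}\big)$ so as to avoid the excluded case $k+l=n$ of that lemma (using then that $(1+|x-x_2|^{0-})\la x_2\ra^{-\beta/2}\in L^2_{x_2}$), and $p\geq 0$ is reduced to $p=0$ via $|x_1-x_2|^p\les\la x_1\ra^p\la x_2\ra^p$, which is where the stronger requirement $\beta>2p+2$ enters. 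You instead prove everything from scratch: an elementary auxiliary bound $\sup_x\int(1+|x-x_1|^{-1})\la x_1\ra^{-\beta+s}dx_1\les1$ for $0\le s<\beta-2$, a split of the $x_1$ integration at $|x_1-x_2|=1$ (rather than a case analysis in $p$), and a direct estimate of the doubly singular integral $\int_{|x_1-x_2|<1}|x-x_1|^{-1}|x_1-x_2|^{-1}dx_1\les 1+\log^-|x-x_2|$, whose local logarithm you then absorb in the $L^2_{x_2}$ norm; your far-region bound uses $\beta>2p_++2$ exactly where the paper does. What your approach buys is independence from the external Lemma~\ref{lem:potential2} and a uniform treatment of all $p\ge-1$ at once, at the cost of carrying the logarithmic singularity explicitly; the paper's approach is shorter given that lemma, and the same $\eps$-splitting trick it uses at $p=-1$ would also let you avoid the logarithm entirely. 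Both arguments give the bound uniformly in $x$, as required.
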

To prove Lemma~\ref{lem:potential}, we use the following  estimate from \cite{EG1}.
\begin{lemma} \label{lem:potential2} Fix $u_1$, $u_2 \in \R^n$ and let $ 0 \leq k$, $l <n$ , $\beta >0$ , $k+l+\beta \geq n$ , $k+l \neq n $. We have
        $$ \int_{\R^n} \frac{\la x \ra ^{-\beta-}}{ |x- u_1|^k |x-u_2|^l} dx \les \left\{
            \begin{array}{ll}
            (\frac{1}{|u_1-u_2|})^{\max(0, k+l-n)} & \quad  |u_1-u_2| \leq 1, \\
             (\frac{1}{|u_1-u_2|})^{\min(k, l, k+l+\beta -n)} & \quad |u_1-u_2| > 1.
        \end{array} \right .
        $$
\end{lemma}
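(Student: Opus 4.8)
The plan is to reduce the estimate to one-center integrals by a partition of $\R^n$ adapted to the three distinguished points $0,u_1,u_2$, and then to evaluate those by dyadic annular sums. Throughout, write $d=|u_1-u_2|$ and fix a small $\epsilon>0$ so that $\la x\ra^{-\beta-}$ means $\la x\ra^{-\beta-\epsilon}$ with $k+l+\beta+\epsilon>n$; this spare $\epsilon$ guarantees that every series below is geometric (and is also why the borderline $k+l=n$ must be excluded). First I would split $\R^n=B_1\cup B_2\cup B_3$ with $B_1=\{|x-u_1|\le d/2\}$, $B_2=\{|x-u_2|\le d/2\}$, $B_3$ the complement. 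On $B_3$ the triangle inequality gives $|x-u_1|\approx|x-u_2|\gtrsim d$, so the integrand is $\lesssim\la x\ra^{-\beta-}|x-u_2|^{-(k+l)}$; on $B_1$ one has $|x-u_2|\approx d$, so the integrand is $\lesssim d^{-l}\la x\ra^{-\beta-}|x-u_1|^{-k}$, and symmetrically on $B_2$. Every piece is thus of the form $\int_E\la x\ra^{-\beta-}|x-u|^{-\alpha}\,dx$ with $\alpha\in\{k,l,k+l\}$, $\alpha\ne n$, over a ball of radius $\sim d$ or the complement of one.

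For $d\le 1$ I would discard the weight wherever the region is bounded. On $B_1$, $\int_{B_1}\lesssim d^{-l}\int_{|y|\le d/2}|y|^{-k}\,dy\approx d^{\,n-k-l}\le d^{-\max(0,k+l-n)}$ by local integrability ($k<n$), and likewise on $B_2$; on the part $\{d/2\le|x-u_2|\le 1\}$ of $B_3$, $\int_{d/2\le|x-u_2|\le 1}|x-u_2|^{-(k+l)}\,dx\lesssim d^{-\max(0,k+l-n)}$, where the hypothesis $k+l\ne n$ is used precisely to rule out a $\log(1/d)$. The far part $\{|x-u_2|>1\}$ of $B_3$ is controlled via $|x-u_2|^{-(k+l)}\lesssim\la x-u_2\ra^{-(k+l)}$ together with the weighted bracket convolution bound $\int_{\R^n}\la x\ra^{-a}\la x-u\ra^{-b}\,dx\lesssim\la u\ra^{-\min(a,b,a+b-n)}$ valid for $a,b\ge0$, $a+b>n$ (itself proved by the same near/far dyadic splitting but without singular factors), which here yields $O(1)$. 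Summing gives the $d\le1$ bound.

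The case $d>1$ is the substantive one, since now $d^{-\min(k,l,k+l+\beta-n)}$ is genuine decay and the weight must be exploited. For $B_3$ I would sum dyadically in $|x-u_2|\approx 2^j$, $2^j\gtrsim d$, using $\int_{|x-u_2|\approx 2^j}\la x\ra^{-\beta-}\,dx\lesssim 2^{jn}(2^j+\la u_2\ra)^{-\beta-\epsilon}$; the series $\sum_{2^j\gtrsim d}2^{j(n-k-l)}(2^j+\la u_2\ra)^{-\beta-\epsilon}$ is geometric because $n-k-l-\beta-\epsilon<0$, and in either case $\la u_2\ra\gtrsim d$ or not it sums to $\lesssim d^{\,n-k-l-\beta-\epsilon}$ (when $k+l<n$) or $\lesssim d^{-(k+l-n)}$ (when $k+l>n$), both $\le d^{-\min(k,l,k+l+\beta-n)}$ — for the first since $k+l+\beta+\epsilon-n>k+l+\beta-n\ge\min(\cdots)$, for the second by the identity $k+l-n=\min(k,l,k+l+\beta-n)$, which holds when $k+l>n$ because $k,l<n$. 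For $B_1$ (and $B_2$ by symmetry): if $|u_1|\gtrsim d$ then $\la x\ra\approx\la u_1\ra\gtrsim d$ on $B_1$, so $\int_{B_1}\lesssim d^{-l}\la u_1\ra^{-\beta-}d^{\,n-k}\lesssim d^{\,n-k-l-\beta-\epsilon}\le d^{-\min(\cdots)}$; if $|u_1|\lesssim d$ — the delicate case, where $0,u_1,u_2$ all sit at scale $d$ — I would use Hölder, $\int_{|x-u_1|\le d/2}\la x\ra^{-\beta-}|x-u_1|^{-k}\,dx\le\|\la\cdot\ra^{-\beta-}\|_{L^p(|x|\le 2d)}\,\|\,|\cdot-u_1|^{-k}\|_{L^q(|x-u_1|\le d/2)}$ with $1/q$ chosen in $(\max(k/n,(n-\beta-\epsilon)/n),1)$ (nonempty exactly because $k<n$ and $\beta>0$); then $p(\beta+\epsilon)>n$ makes the first factor $O(1)$, the second is $\lesssim d^{\,n/q-k}$, and $d^{-l}d^{\,n/q-k}=d^{\,n/q-k-l}\le d^{-\min(\cdots)}$ once $n/q$ is taken close enough to $\max(k,n-\beta-\epsilon)$, since the limiting exponent $\max(-l,\;n-k-l-\beta-\epsilon)$ is strictly below $-\min(k,l,k+l+\beta-n)$. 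Summing the contributions of $B_1,B_2,B_3$ gives the $d>1$ bound.

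I expect the main obstacle to be exactly this $|u_1|\lesssim d$, $d>1$ sub-case, where the singularity at $u_1$ and the decay of $\la x\ra^{-\beta-}$ cannot be separated by a pointwise bound and the sharp exponent $\min(k,l,k+l+\beta-n)$ only appears after balancing the two (via Hölder as above, or equivalently a dyadic decomposition discarding neither factor). The remaining work is routine bookkeeping to confirm that every dyadic sum is geometric, which is where the strict inequality $k+l+\beta+\epsilon>n$ from the ``$-$'' in $\la x\ra^{-\beta-}$, the exclusion $k+l\ne n$, and the local-integrability hypotheses $k,l<n$ are each used.
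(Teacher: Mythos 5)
Your overall scheme (balls of radius $|u_1-u_2|/2$ about $u_1,u_2$ plus the complement, then one-center estimates) is the standard one — the paper itself does not prove this lemma but quotes it from \cite{EG1} — and your $d\le 1$ case is fine. The genuine gap is in the sub-case you yourself flag as delicate: $d=|u_1-u_2|>1$, $|u_1|\les d$. Your H\"older step cannot produce the stated exponent. To make $\|\,|\cdot-u_1|^{-k}\|_{L^q(|x-u_1|\le d/2)}$ finite you must take $1/q>k/n$ strictly, so your exponent is $n/q-k-l>-l$; and your claim that the limiting exponent $\max(-l,\,n-k-l-\beta-\epsilon)$ lies \emph{strictly} below $-\min(k,l,k+l+\beta-n)$ is false exactly when $l\le k$ and $k+\beta\ge n$, since then both quantities equal $-l$. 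In that regime ``taking $n/q$ close enough'' only yields $d^{-l+\delta}$ for every $\delta>0$, never $d^{-l}$, and the lemma has no slack in $|u_1-u_2|$. Concretely take $n=2$, $k=1$, $l=\f12$, $\beta=5$ (precisely the regime in which this paper applies the lemma), $u_1=0$, $|u_2|=d$: the $B_1$ piece is $\approx d^{-1/2}\int_{|x|\le d/2}\la x\ra^{-5-}|x|^{-1}dx\approx d^{-1/2}=d^{-\min(k,l,k+l+\beta-n)}$, which is sharp, while your H\"older bound is $d^{-3/2+2/q}$ with $2/q>1$, strictly larger by a positive power of $d$. The repair is to drop the fixed-exponent H\"older and estimate the one-center integral in two regimes (split at $|x-u_1|\approx\la x\ra$, or dyadically keeping both factors): $\int_{|x-u_1|\le d/2}\la x\ra^{-\beta-}|x-u_1|^{-k}dx\les 1+d^{\,n-k-\beta-}$, whence $B_1\les d^{-l}+d^{\,n-k-l-\beta-}\le d^{-\min(k,l,k+l+\beta-n)}$, and symmetrically for $B_2$.

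Two smaller slips occur in your $B_3$ analysis for $d>1$, both repairable. First, the annulus bound $\int_{|x-u_2|\approx 2^j}\la x\ra^{-\beta-}dx\les 2^{jn}(2^j+\la u_2\ra)^{-\beta-}$ fails when $\beta\ge n$ and $\la u_2\ra\les 2^j$ (the left side is $\gtrsim 1$ because the annulus passes near the origin, the right side decays in $j$) — and $\beta>n$ is the typical application here. Second, the ``identity'' $k+l-n=\min(k,l,k+l+\beta-n)$ for $k+l>n$ is only the inequality $k+l-n\le\min(\cdots)$ (e.g. $n=2$, $k=l=3/2$, $\beta=1/2$), which points the wrong way for $d>1$, so bounding $B_3$ by $d^{-(k+l-n)}$ does not give $d^{-\min(\cdots)}$. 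Both are avoided by not discarding the decay: either keep $(2^j+\la u_2\ra)^{-\beta-}\le 2^{-j(\beta-)}$ when $\beta<n$ and sum to $d^{\,n-k-l-\beta-}\le d^{-(k+l+\beta-n)}$, or, when $\beta\ge n$, simply use $|x-u_2|^{-(k+l)}\les d^{-(k+l)}$ on $B_3$ and integrate the weight, noting $k+l\ge\min(k,l,k+l+\beta-n)$. With these corrections your decomposition does yield the lemma, but as written the H\"older step is a real gap, not bookkeeping.
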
   
\begin{proof}[Proof of Lemma~\ref{lem:potential}]
We first consider $-1\leq p\leq 0$.  In this case, if $p>-1$ we use Lemma~\ref{lem:potential2} in the $x_1$ integral to see
$$	
\int _{\R^2} (1+|x-x_1|^{-1}) \la x_1 \ra ^{-\beta} |x_1 - x_2|^{p} \la x_2\ra ^{-\beta/2} dx_1 \les \la x_2\ra^{-\beta/2} \la x-x_2\ra^{p} \les \la x_2\ra^{-\beta/2} \in L^2_{x_2}.
$$	
On the other hand, if $p=-1$, we use
$$
	\frac{1}{|x-x_1|\, |x_1-x_2|}\les \frac{1}{|x-x_1|}\bigg(\frac{1}{|x-x_1|^{1-}}+\frac{1}{|x_1-x_2|^{1+}}
	 \bigg)
$$
In which case, we use that Lemma~\ref{lem:potential2} and the fact that $(1+|x-x_2|^{0-}) \la x_2\ra^{-\beta/2}\in L^2_{x_2}$.  If $p\geq 0$, we may reduce to the $p=0$ case by noting $|x_1-x_2|^p \les \la x_1\ra^p \la x_2 \ra^p$, which necessitates the larger value of $\beta$.
\end{proof}

We recall Lemma~3.5 in \cite{egd}.

\begin{lemma}\label{lem:high stat phase2}   If 
	$$
	|a(z)|\les 	\frac{z\chi_j(z) \widetilde \chi(zr)}{(1+zr)^{\f12}} ,
	\qquad |\partial_z a(z)|\les 
	\frac{\chi_j(z) \widetilde \chi(zr)}{(1+zr)^{\f12}},
	$$
	then we have the bound
	\begin{align*}
	\bigg|\int_0^\infty e^{-it \phi_\pm(z) } a(z)\, dz\bigg| \les  \min(2^{2j}, 2^{\frac{3j}2}|t|^{-1/2}, 2^{2j} |t|^{-1}),
	\end{align*}
	where $\phi_\pm(z)=\sqrt{z^2+m^2}\mp \frac{z r}{t}$.
\end{lemma}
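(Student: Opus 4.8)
The plan is to treat the integral as a one–dimensional oscillatory integral with at most one stationary point and run the standard van der Corput / stationary phase machinery of \cite{Sc2,eg2,eg3}; this is precisely Lemma~3.5 of \cite{egd} and is the dyadic analogue of Lemma~\ref{stat phase}. Write $\phi=\phi_\pm$, so that $\phi'(z)=\frac{z}{\sqrt{z^2+m^2}}\mp\frac{r}{t}$ and $\phi''(z)=\frac{m^2}{(z^2+m^2)^{3/2}}$, the latter being comparable (with an $m$–dependent constant) to $2^{-3j}$ once $2^j\gtrsim m$ and to a constant for $j$ in a bounded range; denote this quantity $\mu_j$, so $\mu_j\approx 2^{-3j}$ for large $j$. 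The cutoff $\widetilde\chi(zr)$ forces $zr\gtrsim 1$ on the support, so the crude bounds $|a(z)|\les 2^j$ and $\|\partial_z a\|_{L^1_z}\les 2^j$ hold, whence the trivial estimate $\big|\int_0^\infty e^{-it\phi}a\,dz\big|\le\|a\|_{L^1_z}\les 2^{2j}$, the first member of the minimum. Finally, since $z\mapsto z/\sqrt{z^2+m^2}$ is strictly increasing from $0$ to $1$, the phase $\phi_-$ has no stationary point while $\phi_+$ has exactly one, $z_0=mr/\sqrt{t^2-r^2}$, and only when $0<r<t$; so it suffices to analyze $\phi_+$, the case $\phi_-$ being strictly easier.

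First I would dispose of the configuration in which $z_0$ lies outside a fixed dilate of the window $\{z\approx 2^j\}$ (this includes $r\ge t$ and, e.g., $r$ small with $j$ large): there $z/\sqrt{z^2+m^2}$ stays bounded away from $r/t$ on the support, hence $|\phi_+'(z)|\gtrsim 1$ there, and one integration by parts (with no boundary terms, since $a$ is supported in the open window) gives a bound of the shape $\les \frac1t\big(\|\partial_z a\|_{L^1_z}+\|a\|_{L^\infty_z}\|\phi_+''\|_\infty 2^j\big)$, i.e. $\les 2^j/t$ for $2^j\gtrsim m$ and $\les t^{-1}$ for bounded $j$, both of which sit below $\min(2^{3j/2}|t|^{-1/2},2^{2j}|t|^{-1})$ since $|t|\ge1$. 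The substantive case is $z_0\approx 2^j$. Here the stationary relation $r/t=z_0/\sqrt{z_0^2+m^2}$ is what links the amplitude decay to the size of $t$: from $z_0\approx 2^j$ one gets $z_0 r\gtrsim 2^j t$ when $2^j\gtrsim m$ and $z_0 r\gtrsim 2^{2j}t/m$ when $2^j\lesssim m$, so $(1+zr)^{-1/2}\approx(1+z_0 r)^{-1/2}$ on the window yields the improved bounds $|a(z)|\les 2^{j/2}|t|^{-1/2}$ and $\|\partial_z a\|_{L^1_z}\les 2^{j/2}|t|^{-1/2}$ there. Integrating $|a|$ over the length–$2^j$ window already gives $\les 2^{3j/2}|t|^{-1/2}$, the second member. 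For the third member, split at $|z-z_0|=\delta$ with $\delta:=\mu_j^{-1/2}|t|^{-1/2}$ (the Fresnel scale, $\mu_j^{-1/2}\approx 2^{3j/2}$): on $|z-z_0|\le\delta$ the trivial bound gives $\les\delta\,\|a\|_{L^\infty_z}\les 2^{3j/2}|t|^{-1/2}\cdot 2^{j/2}|t|^{-1/2}=2^{2j}|t|^{-1}$, while on the complement $|\phi_+'(z)|\gtrsim\mu_j|z-z_0|$ and one integration by parts, using the improved amplitude bounds, again yields $\les 2^{2j}|t|^{-1}$ (a logarithm from $\int_{|z-z_0|>\delta}|z-z_0|^{-1}\,dz$ is harmless, being absorbed in the only regime $t\ge 2^j$ where it could arise). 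If $\delta$ would exceed $2^j$, i.e. $t\le 2^j$, one simply integrates over the whole window and recovers $2^{3j/2}|t|^{-1/2}$, which then dominates $2^{2j}|t|^{-1}$.

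Collecting the three estimates gives the claimed $\les\min(2^{2j},2^{3j/2}|t|^{-1/2},2^{2j}|t|^{-1})$. The main obstacle is purely the regime bookkeeping: one must simultaneously keep track of whether $2^j$ is comparable to or much larger than $m$, whether $r<t$, and, when it is, whether $z_0$ falls inside, near the edge of, or far from the dyadic window; and — the point that makes the sharp powers of $2^j$ come out — one must exploit the amplitude decay $(1+zr)^{-1/2}$ exactly in the regime where $z_0$ lies inside the window, where the stationary relation pins $r$ to $t$ up to the bounded factor $z_0/\sqrt{z_0^2+m^2}$; without it the trivial and Fresnel–window estimates would each be off by a factor $2^j$. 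The borderline ``almost stationary'' cases ($r$ slightly larger than $t$, or $z_0$ just outside the window) are handled by the same split–and–optimize argument as the genuine stationary case. All of this is carried out in \cite{egd}.
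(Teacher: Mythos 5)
The paper does not actually prove this lemma; it recalls it verbatim as Lemma~3.5 of \cite{egd}, and the closest in-paper analogue of the argument is the proof of Lemma~\ref{lem:high2}. Your overall plan is the standard one and most of it is sound: the trivial bound $2^{2j}$, the non-stationary treatment of $\phi_-$, and the genuinely stationary case for $\phi_+$ (Fresnel-scale split, the pinning $r\approx t$ coming from $z_0\approx 2^j$, the use of $(1+zr)^{-1/2}\approx (2^jt)^{-1/2}$, and the harmless logarithm in the regime $t\ge 2^j$) all check out.

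The gap is in your ``non-stationary'' case. You claim that whenever $z_0$ lies outside a fixed dilate of the window $\{z\approx 2^j\}$ --- explicitly including $r\ge t$ --- one has $|\phi_+'(z)|\gtrsim 1$ on the support, and you integrate by parts once to get $2^j/t$. This is false for large $j$: since $1-\frac{z}{\sqrt{z^2+m^2}}\approx \frac{m^2}{2z^2}\approx 2^{-2j}$ on the window, taking $r=t$ (no stationary point at all), or any $z_0\ge C2^j$ arbitrarily far above the window, gives only $|\phi_+'|\gtrsim 2^{-2j}$ there, because $z/\sqrt{z^2+m^2}$ flattens to $1$. With that weaker lower bound your single integration by parts yields $2^{3j}/t$ without the amplitude decay, and $2^{5j/2}t^{-3/2}$ with it, so the member $2^{2j}|t|^{-1}$ does not follow as written. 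The correct dichotomy is not the position of $z_0$ relative to the window but $r\approx t$ versus $r\not\approx t$ (this is exactly how the proof of Lemma~\ref{lem:high2} proceeds): if $r\not\approx t$ then indeed $|\phi_+'|\gtrsim 1$ on the support and your computation is fine; if $r\approx t$ then, whether or not $z_0$ exists or lies in the window, one must invoke the decay $(1+zr)^{-1/2}\approx(2^jt)^{-1/2}$ --- available precisely because $r\approx t$ --- and then either the second-derivative (van der Corput/Lemma~\ref{stat phase general}) estimate with $\phi''\approx 2^{-3j}$, or your split-and-optimize, gives $2^{2j}|t|^{-1}$ for $t\ge 2^j$ while the trivial-with-decay bound gives $2^{3j/2}|t|^{-1/2}$ for $t\le 2^j$. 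Your closing remark about ``borderline'' cases gestures at this repair but mischaracterizes the problematic set ($z_0$ ``just outside the window'' or $r$ ``slightly larger than $t$''): the difficulty occurs for all $r\in[t,2t]$ and for $z_0$ arbitrarily far above the window, and in your write-up that entire set is handled by the invalid bound $|\phi_+'|\gtrsim1$. The estimate itself is true and the fix is routine, but as written this step fails.
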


We have  the  following (slightly modified) lemma from
\cite{Sc2}, see \cite[Lemma 3.3]{egd}
\begin{lemma}\label{stat phase general}

	Let $\phi'(z_0)=0$ and $1\leq \phi'' \leq C$.  Then,
\begin{multline*}
    \bigg| \int_{-\infty}^{\infty} e^{-it\phi(z)} a(z)\, dz \bigg|
    \lesssim \int_{|z-z_0|<|t|^{-\frac{1}{2}}} |a(z)|\, dz \\
    +|t|^{-1} \int_{|z-z_0|>|t|^{-\frac{1}{2}}} \bigg( \frac{|a(z)|}{|z-z_0|^2}+
    		\frac{|a'(z)|}{|z-z_0|}\bigg)\, dz.
\end{multline*}

\end{lemma}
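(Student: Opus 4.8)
The plan is the standard stationary/non-stationary splitting. In all applications $a\in C^1$ with compact support, and this is the only case we need: if the right-hand side is infinite there is nothing to prove, so we may assume that $a$ is $C^1$ with enough decay that the integrations by parts below produce no contribution at $\pm\infty$. Write $\int_{\R}=\int_{|z-z_0|<|t|^{-1/2}}+\int_{|z-z_0|>|t|^{-1/2}}$. On the first piece the trivial bound $|e^{-it\phi}|\le1$ immediately gives $\int_{|z-z_0|<|t|^{-1/2}}|a(z)|\,dz$, the first term on the right.

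For the non-stationary piece I would use the two elementary consequences of the hypotheses: since $\phi''\ge1$, we have $|\phi'(z)|=|\int_{z_0}^z\phi''(s)\,ds|\ge|z-z_0|$, and $|\phi''|\le C$. On each of the two half-lines $z>z_0+|t|^{-1/2}$ and $z<z_0-|t|^{-1/2}$ the phase derivative $\phi'$ has no zero, so one writes $e^{-it\phi}=\frac{-1}{it\phi'}\partial_z(e^{-it\phi})$ and integrates by parts. Using $\partial_z(a/\phi')=a'/\phi'-a\phi''/(\phi')^2$ together with the two bounds above, the interior contribution is dominated by
\[
\frac1{|t|}\int_{|z-z_0|>|t|^{-1/2}}\Big|\partial_z\Big(\frac{a(z)}{\phi'(z)}\Big)\Big|\,dz
\le \frac1{|t|}\int_{|z-z_0|>|t|^{-1/2}}\Big(\frac{|a'(z)|}{|z-z_0|}+\frac{C\,|a(z)|}{|z-z_0|^2}\Big)\,dz,
\]
which is exactly the second term on the right.

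The one point that needs a short argument — and the only nonroutine step — is the boundary term produced at $z=z_0\pm|t|^{-1/2}$, of size $|t|^{-1}|\phi'(z_0\pm|t|^{-1/2})|^{-1}|a(z_0\pm|t|^{-1/2})|\le|t|^{-1/2}|a(z_0\pm|t|^{-1/2})|$. To dominate this pointwise value of $a$ by the integrals on the right, I would apply the fundamental theorem of calculus over the shell $\Sigma:=\{|t|^{-1/2}<|z-z_0|<2|t|^{-1/2}\}$: for $z\in\Sigma$ one has $|a(z_0\pm|t|^{-1/2})|\le|a(z)|+\int_\Sigma|a'(s)|\,ds$, and averaging over $z\in\Sigma$ (which has length $\approx|t|^{-1/2}$) yields $|t|^{-1/2}|a(z_0\pm|t|^{-1/2})|\les\int_\Sigma|a(z)|\,dz+|t|^{-1/2}\int_\Sigma|a'(s)|\,ds$. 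Since $|z-z_0|\approx|t|^{-1/2}$ on $\Sigma$, these two terms are comparable to $|t|^{-1}\int_\Sigma|a(z)||z-z_0|^{-2}\,dz$ and $|t|^{-1}\int_\Sigma|a'(s)||s-z_0|^{-1}\,ds$ respectively, hence are absorbed into the non-stationary integral of the previous paragraph. Adding the near-region bound, the interior term, and the boundary term (done symmetrically on the two half-lines) gives the stated estimate.
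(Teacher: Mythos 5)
Your proof is correct, and it is the standard argument behind this lemma: the paper does not prove it but quotes it from Schlag \cite{Sc2} (see also Lemma 3.3 of \cite{egd}), and your splitting at $|z-z_0|=|t|^{-1/2}$, the integration by parts using $|\phi'(z)|\ge |z-z_0|$ and $|\phi''|\le C$, and the absorption of the boundary terms $|t|^{-1/2}|a(z_0\pm|t|^{-1/2})|$ by averaging over the adjacent shell $|t|^{-1/2}<|z-z_0|<2|t|^{-1/2}$ is exactly the right way to fill in the details. One small caveat: finiteness of the right-hand side does not by itself give decay of $a$ at infinity, so the reduction should simply be stated as the compact-support (or suitably decaying) case, which, as you note, is all that is used in the paper.
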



\begin{thebibliography}{9} 
   

\bibitem{AS}
Abramowitz, M.  and  Stegun, I.~A. \emph{Handbook of mathematical functions with
formulas, graphs, and mathematical tables.} National Bureau of Standards
Applied Mathematics Series, 55. For sale by the Superintendent of Documents,
U.S. Government Printing Office, Washington, D.C. 1964


\bibitem{agmon} Agmon, S. {\em Spectral properties of Schr\"odinger
operators and scattering theory.}
Ann.\ Scuola Norm.\ Sup.\ Pisa Cl.\ Sci. (4) 2 (1975), no.~2, 151--218.

\bibitem{BaHe}  Balslev E. and Helffer, B. {\em Limiting absorption principle and resonances for the Dirac operator}, Advances in Advanced Mathematics   13 (1992),  186--215. 





\bibitem{Bec} Beceanu, M. \emph{Dispersive estimates in $\R^3$ with threshold eigenstates and resonances.}
Anal. PDE 9 (2016), no. 4, 813--858. 


\bibitem{BH}
Bejenaru, I. and  Herr, S.  \emph{The cubic Dirac equation: small initial data in $H^{1/2}(\R^2)$.}  Commun. Math. Phys. 343 (2016), 515--562.

\bibitem{BG1}
Berthier, A.,  and  Georgescu, V. \emph{On the point spectrum of Dirac operators.} J. Funct. Anal. 71 (1987), no. 2, 309--338. 




\bibitem{Bouss1}
Boussaid, N. \emph{Stable directions for small nonlinear Dirac standing waves.} Comm. Math. Phys. 268 (2006), no. 3, 757--817.

\bibitem{BC1}
Boussaid, N., and Comech, A. {\em On spectral stability of the nonlinear Dirac equation,} J. Funct. Anal Volume 271,  (2016), no. 6, 1462--1524.

\bibitem{BC2}
Boussaid, N., and Comech, A. 
\emph{Spectral stability of small amplitude solitary waves of the Dirac equation with the Soler-type nonlinearity,}  Preprint 2017, 61 pages.   	arXiv:1705.05481

\bibitem{BDF}
Boussaid, N.,  D'Ancona, P., and  Fanelli, L. \emph{Virial identiy and weak dispersion for the magnetic Dirac equation.}
J. Math. Pures Appl., 95:137--150, 2011.

\bibitem{BG}
Boussaid, N.,  and  Golenia, S.  \emph{Limiting absorption principle for some long range perturbations of Dirac systems at threshold energies.} Comm. Math. Phys. 299 (2010), no. 3, 677--708. 


\bibitem{C}
Cacciafesta, F. \emph{Virial identity and dispersive estimates for the n-dimensional Dirac equation}, J. Math. Sci. Univ. Tokyo 18 (2011), 1--23.

\bibitem{CS}
Cacciafesta, F. and  Ser\'e, E. \emph{Local smoothing estimates for the massless Dirac equation in 2 and 3 dimensions}. J. Funct. Anal 271 (2016) no.8, 2339--2358.


\bibitem{CTS} Comech, A., Phan, T., and Stefanov, A.  \emph{Asymptotic stability of solitary waves in generalized Gross-Neveu model}, Ann. Inst. H. Poincar\'e Anal. Non Lin\'eaire 34 (2017), 157--196.
 

\bibitem{DF}
D'Ancona, P., and  Fanelli, L. \emph{Decay estimates for the wave and Dirac equations with a magnetic potential}. Comm. Pure Appl. Math. 60 (2007), no. 3, 357--392. 


\bibitem{egg}
Erdo\smash{\u{g}}an, M.~B.,  Goldberg, M.~J., and  Green, W.~R. {\em Limiting absorption principle and Strichartz estimates for Dirac operators in two and higher  dimensions.}   Preprint 2017, 26 pages.  	arXiv:1706.05257


\bibitem{EG1}
Erdo\smash{\u{g}}an, M.~B., and  Green, W.~R. \emph{Dispersive estimates for the Schrodinger equation for $C^{\frac{n-3}{2}}$ potentials in odd dimensions}. Int. Math. Res. Notices 2010:13, 2532--2565.

\bibitem{eg2}
Erdo\smash{\u{g}}an, M.~B., and  Green, W.~R. \emph{Dispersive estimates for Schr\"{o}dinger operators in dimension two with obstructions at zero energy}.  Trans. Amer. Math. Soc. 365 (2013), 6403--6440.

\bibitem{eg3} 
Erdo\smash{\u{g}}an, M.~B., and  Green, W.~R. \emph{A weighted dispersive estimate for Schr\"odinger operators in dimension two}. Commun. Math. Phys. 319 (2013), 791--811. 

\bibitem{eg4} Erdo\smash{\u{g}}an, M.~B., and Green, W.~R. {\em Dispersive estimates for matrix Schr\"odinger operators in dimension two.} Discrete Contin. Dyn. Syst. 33 (2013), no. 10, 4473--4495.

\bibitem{egd} Erdo\smash{\u{g}}an, M.~B., and  Green, W.~R. \emph{The Dirac equation in two dimensions: Dispersive estimates and classification of threshold obstructions}.  Commun. Math. Phys. 352 (2017), no. 2, 719--757.

\bibitem{EGT} Erdo\smash{\u{g}}an, M.~B., Green, W.~R. and Toprak, E.  \emph{Dispersive estimates for Dirac operators in dimension three with obstructions at threshold energies.} Preprint, 2016.   	arXiv:1609.05164

\bibitem{ES} Erdo\smash{\u{g}}an, M.~B.,  and Schlag, W. \emph{Dispersive estimates for Schrodinger operators in the presence of a resonance and/or an eigenvalue at zero energy in dimension three: I}, Dynamics of PDE 1 (2004), 359--379.

\bibitem{ES2} Erdo\smash{\u{g}}an, M.~B.,  and Schlag, W.\emph{Dispersive estimates for Schr\"{o}dinger operators in the presence of a resonance and/or eigenvalue at zero energy in dimension three: II.}  J. Anal. Math.  99  (2006), 199--248.
 
\bibitem{EV}
Escobedo, M.,  and  Vega, L. \emph{A semilinear Dirac equation in $H^s(\R^3)$ for $s>1$.}
SIAM J. Math. Anal. 28 (1997), no. 2, 338--362.



\bibitem{GM}
Georgescu, V., and Mantoiu, M. \emph{On the spectral theory of singular Dirac type Hamiltonians.}
J. Operator Theory 46 (2001), no. 2, 289-321. 







\bibitem{Gwave} Green, W. \emph{Time decay estimates for the wave equation with potential in dimension two.} J. Differential Equations 257 (2014), no. 3, 868--919.





\bibitem{JenKat}  Jensen, A.,  and  Kato, T. {\em Spectral properties of Schr\"odinger operators and time--decay of the wave functions.} Duke Math.\ J.~46  (1979), no.~3, 583--611.

\bibitem{JN}
Jensen, A.  and  Nenciu, G. \emph{A unified approach to resolvent expansions at thresholds}. Rev. Mat. Phys. vol. 13, no. 6 (2001), 717--754.



\bibitem{kopy}
Kopylova, E. \emph{Dispersion estimates for 2D Dirac equation.} Asymptot. Anal. 84 (2013), no. 1--2, 35--46.






\bibitem{Mur} Murata, M.  {\em Asymptotic expansions in time for solutions of Schr\"odinger-type equations} J.\ Funct.\ Anal.~49 (1) (1982), 10--56.






\bibitem{Sc2}
Schlag,  W. \emph{Dispersive estimates for Schr\"{o}dinger operators in
dimension two.} Comm. Math. Phys. 257 (2005), no. 1, 87--117.

 

\bibitem{Thaller}
Thaller, B. \emph{The Dirac equation.} Texts and Monographs in Physics. Springer-Verlag, Berlin, 1992.



\bibitem{ebru} Toprak, E. {\em  A weighted estimate for two dimensional Schr\"odinger, Matrix Schr\"odinger and wave equations with resonance of the first kind at zero.} preprint 2015, to appear in J. Spectr. Theory.  	arXiv:1509.03204




\bibitem{Yaj3} Yajima, K. \emph{Dispersive estimate for Schr\"odinger equations with threshold resonance and eigenvalue}, Comm. Math. Phys.  259  (2005), 475--509.

\bibitem{Yam}
Yamada, O. \emph{A remark on the limiting absorption method for Dirac operators.}
Proc. Japan Acad. Ser. A Math. Sci. 69 (1993), no. 7, 243--246. 

\end{thebibliography}
\end{document}